\newcommand{\Px}{ \mathbb{P} }
\newcommand{\Qx}{ \mathbb{Q} }
\newcommand{\Ex}{ \mathbb{E} }
\def\esssup_#1{\underset{#1}{\mathrm{ess\,sup\, }}}
\def\essinf_#1{\underset{#1}{\mathrm{ess\,inf\, }}}
\def\argmax_#1{\underset{#1}{\mathrm{arg\,max\, }}}
\def\argmin_#1{\underset{#1}{\mathrm{arg\,min\, }}}
\newcommand{\Fx}{\mathbb{F} }
\newcommand{\F}{\mathcal{F}}
\newcommand{\R}{\mathds{R}}
\newtheorem{theorem}{Theorem}[section]
\newtheorem{definition}{Definition}[section]
\numberwithin{equation}{section}
\newtheorem{proposition}[theorem]{Proposition}
\newtheorem{remark}[theorem]{Remark}
\newtheorem{lemma}[theorem]{Lemma}
\newtheorem{corollary}[theorem]{Corollary}
\definecolor{Red}{rgb}{1.00, 0.00, 0.00}
\definecolor{DRed}{rgb}{0.5, 0.00, 0.00}
\definecolor{Blue}{rgb}{0.00, 0.00, 1.00}
\definecolor{Green}{rgb}{0.0, 0.4, 0.0}
\title{Continuous-time q-learning in jump-diffusion models\\ under Tsallis entropy}
\date{\vspace{-0.5in}}
\author{Lijun Bo \thanks{Email: lijunbo@ustc.edu.cn, School of Mathematics and Statistics, Xidian University, Xi'an, 710126, China.}
\and
Yijie Huang \thanks{Email: huang1@mail.ustc.edu.cn,  Department of Applied Mathematics, The Hong Kong Polytechnic University, Kowloon, Hong Kong, China.}
\and
Xiang Yu \thanks{Email: xiang.yu@polyu.edu.hk, Department of Applied Mathematics, The Hong Kong Polytechnic University, Kowloon, Hong Kong, China.}
\and
Tingting Zhang \thanks{Email: ttzhang1118@suda.edu.cn, Center for Financial Engineering, Soochow University, Suzhou, 215006, China.}
}
\begin{document}
\maketitle

\begin{abstract}
This paper studies continuous-time reinforcement learning in jump-diffusion models by featuring the q-learning (the continuous-time counterpart of Q-learning) under Tsallis entropy regularization. Contrary to the Shannon entropy, the general form of Tsallis entropy renders the optimal policy not necessarily a Gibbs measure. Herein, the Lagrange multiplier and KKT condition are needed to ensure that the learned policy is a probability density function. As a consequence, the characterization of the optimal policy using the q-function also involves a Lagrange multiplier. In response, we establish the martingale characterization of the q-function and devise two q-learning algorithms depending on whether the Lagrange multiplier can be derived explicitly or not. We also study two numerical examples, namely, an optimal liquidation problem in dark pools and a non-LQ control problem. It is interesting to see therein that the optimal policies under the Tsallis entropy regularization can be characterized explicitly, which are distributions concentrated on some compact support. The satisfactory performance of our q-learning algorithms is illustrated in each example.

\vspace{0.1in}
\noindent\textbf{Keywords}: Continuous-time q-learning, Tsallis entropy, optimal policy distribution, Lagrange multiplier, jump-diffusion processes, portfolio liquidation
\end{abstract}

\section{Introduction}

Reinforcement learning (RL) has witnessed fast-growing advancements in recent years, especially in the continuous-time framework. Q-learning algorithm (see \citealt{watkins1989learning,watkins1992q})  is widely known as a popular value-based learning method in the discrete-time framework. By learning a Q-function that maps state-action pairs to expected rewards, Q-learning enables agents to optimize policies by iteratively updating Q-values to reflect the expected returns. (\citealt{sutton2018reinforcement}). { However, learning the Q-function in a continuous-time setting is not straightforward, as the Q-function collapses to a value function independent of actions. Continuous-time models have been popularized in quantitative finance thanks to their merits that can effectively capture real-time adjustments of dynamics to the fast-changing environment and facilitate the characterization of more precise control. This is particularly advantageous in tasks requiring fine-grained decision-making, such as high-frequency trading. Given that real-world decision-making often unfolds in continuous time, there has been growing interest and debate among researchers in developing effective continuous-time RL algorithms. Early research on continuous-time RL dates back to \cite{baird1994reinforcement}, who introduced "advantage updating" for continuous-time control, and \cite{munos1997reinforcement}, who explored RL for continuous stochastic control problems. Recent pioneer studies \cite{wang2020reinforcement}, \cite{jia2022policyaa,jia2022policymm,jia2023q} have laid the theoretical foundations for continuous-time RL with continuous state and action spaces. In particular, \cite{jia2023q} propose a continuous-time q-learning approach, generalizing the conventional Q-function and Q-learning algorithm from a discrete-time setting to continuous-time counterparts by utilizing the first-order approximation of the advantage function (the difference between the Q-function and the value function) with respect to time.

Compared to discrete-time RL algorithms in the literature, continuous-time RL methods devise the policy iteration rules and the loss functions for policy evaluations in the continuous-time framework without any prior time-discretization, therefore making the algorithms stable and robust with respect to the size of time-discretization in the later implementation steps; see some discussions on the sensitivity of discrete time RL algorithms with respect to time-discretization in \cite{tallec2019making}. Moreover, the continuous-time RL framework allows the interplay of advanced mathematical tools and techniques such as stochastic differential equations and control theory in establishing the theoretical foundations of the algorithms. The continuous-time RL theories and algorithms have been generalized in various directions recently. To name a few, \cite{wang2023reinforcement} propose an actor-critic RL algorithm for optimal execution in the continuous-time Almgren-Chriss model, employing entropy regularization; \cite{WY2023} generalize the continuous-time q-learning algorithm in the learning task of mean-field control problems where the integrated q-function and the essential q-function together with test policies play crucial roles in their model-free algorithm; \cite{dai2023learning}  apply reinforcement learning to Merton's utility maximization problem in an incomplete market, focusing on learning optimal portfolio strategies without knowing model parameters; \cite{bo2023optimal} utilize the continuous-time q-learning method to address the optimal tracking portfolio problems with state reflections; \cite{han2023choquet} integrate the Choquet regularizers into continuous-time entropy-regularized RL,  exploring explicit solutions for optimal strategies in the linear-quadratic (LQ) setting; \cite{giegrich2024convergence} investigate a global linear convergence of policy gradient methods for continuous-time exploratory LQ control problems, employing geometry-aware gradient descents and proposing a novel algorithm for discrete-time policies.  

In many real-life applications, the state dynamics of interest often incur abrupt changes, and the classical diffusion models fail to capture the sudden shocks. For instance, stock prices can experience sharp jumps in response to unexpected news, and similar phenomena are observed in neuron dynamics, climate data, and other domains. To address these limitations, extending the existing continuous-time RL theory and algorithms is imperative to account for jump-diffusion processes. In quantitative finance, jump-diffusion models have been widely used to capture market behavior in response to sudden asset price changes. For example, \cite{merton1976option} incorporates jumps into the underlying asset price model to extend the classical Black-Scholes model. In particular, dark pool trading in equity markets is a prime example in which jump-diffusion models are essential. Dark pools are alternative trading venues that allow large orders to be executed without significant market impact but with the uncertainty of order execution. The liquidity in dark pools is not publicly quoted, and trades are settled based on prices determined by traditional exchanges, leading to sudden, unpredictable execution events (see \citealt{kratz2014optimal,kratz2015portfolio}).  This makes the dark pool trading a suitable model to employ the jump-diffusion processes. For theoretical studies on RL in jump-diffusion framework, \cite{gao2024reinforcement} recently generalize the continuous-time q-learning from \cite{jia2023q} to jump-diffusion models and examined some financial applications; \cite{meng2024reinforcement} investigate the RL algorithms for intensity control in jump-diffusion models with an application to choice-based network revenue management; \cite{WY2024} study the unified q-learning for mean-field control and mean-field game problems with distribution-dependent McKean-Vlasov jump-diffusion processes. However, the aforementioned results only focus on the Shannon entropy in order to derive some explicit expressions of the optimal policy in the form of Gibbs measure.

In contrast, the present paper aims to develop a continuous-time q-learning method for jump-diffusion models with state-dependent intensity under Tsallis entropy.  \cite{tsallis1988possible} proposes a generalization of Shannon entropy that provides greater flexibility and robustness to handle learning tasks with diverse policy distributions especially for the purpose of  concentrated sample actions. Particularly, Tsallis entropy is superior in scenarios with prevalent non-Gaussian, heavy-tailed behavior, on the compact support. As a direct consequence, the sampled actions are more concentrated in certain regions such that some extreme and risky decisions can be avoided during the learning procedure (see  \citealt{mertikopoulos2016learning,chow2018path}).
By adjusting its index parameter, Tsallis entropy regularization can turn the learned optimal policy into different types, offering greater flexibility in managing uncertainty and incentivizing exploration in RL. \cite{lee2018sparse,lee2019tsallis} study a class of Markov decision processes (MDP) with Tsallis entropy maximization. \cite{donnelly2024exploratory} recently investigate the optimal control in models with latent factors where the agent controls the distribution over actions by rewarding exploration with Tsallis entropy in both discrete and continuous time.  

Continuous-time q-learning under general entropy regularization is still underdeveloped. 
 We study in the present paper the general Tsallis entropy to encourage exploration. To overcome the measure-theoretical issues with continuous sampling observed in \cite{szpruch2024optimal,bender2024grid}, we follow \cite{jia2025accuracy} to consider the time-discretely sampled action process in the jump-diffusion models. We show that the controlled jump-diffusion state process converges weakly to the dynamics with coefficients aggregated according to the stochastic policy when the sampling mesh size tends to zero. Accordingly, we provide the exploratory formulation and derive the associated exploratory HJB equation. To guarantee that the learned policy is indeed a probability density function, some additional constraints are inevitable. To tackle this issue, we characterize the optimal policy by using Lagrange multipliers and Karush-Kuhn-Tucker condition. As a result, the Lagrange multiplier appears in the characterization of the optimal policy, which may not admit an explicit expression. This results in a possibly implicit characterization of the optimal policy differing significantly from the Gibbs measure under the Shannon entropy; see \cite{jia2023q}. We establish the policy improvement result and generalize the martingale characterization of the q-function and the value function in our setting that involves the Lagrange multiplier. In particular, we devise the offline q-learning algorithms depending on whether the Lagrange multiplier can be explicitly derived or not.

Our paper then applies the proposed q-learning algorithms under Tsallis entropy regularization to two financial applications. The first example employs the q-learning method to solve an LQ control problem with pure jumps that optimizes the trading strategies in dark pools as in \cite{kratz2014optimal,kratz2015portfolio}. When trading occurs concurrently in both the primary market and dark pools, the distribution of trades in these venues follows a two-dimensional random vector. Notably, the optimal policy in this LQ framework can be obtained explicitly, which is a non-Gaussian distribution with a compact support. The second example adopts the q-learning to solve a class of non-LQ jump-diffusion control problems related to selecting different repo rates (\citealt{bichuch2018arbitrage}). When dealing with two distinct repo rates, the trading proportions of these financial products are governed by a two-dimensional random vector. An interesting finding is that the optimal policy under a general power utility can be explicitly derived when the Tsallis entropy index is set to be $2$, but no explicit characterization of the optimal policy can be obtained under the conventional Shannon entropy, illustrating one technical advantage of Tsallis entropy over the Shannon entropy. However, we emphasize that the goal of this study on Tsallis entropy is not to demonstrate better learning performance over Shannon entropy in convergence or accuracy. Rather, the motivation is to understand how Tsallis entropy may help to derive some optimal policies on compact support, which can be appealing in practical applications thanks to the sampled actions in bounded domain without ad-hoc truncations.

The remainder of this paper is organized as follows. Section \ref{sec:jumpmodel} introduces the exploratory formulation of the jump-diffusion control problem under the Tsallis entropy regularization. Section \ref{sec:q-func} derives the q-function and establishes its martingale characterization, where the optimal policy relates to the q-function depending on the Lagrange multiplier. In Section \ref{sec:algorithm}, the q-learning algorithms are devised respectively when the Lagrange multiplier is known or not. As applications, Section \ref{sec:appl} examines an optimal portfolio liquidation problem and a non-LQ optimal repo rates control problem. In both cases, the optimal value functions and q-functions admit exact parameterizations. We further present numerical results demonstrating the satisfactory convergence of our q-learning algorithms in these settings. 

\ \\
 \noindent{\bf Notations.}\quad Throughout this paper, we will use the following notations frequently.  For $a,b\in\R^n$, let $a\cdot b:=a^{\top}b$. For a matrix \(A\in\mathbb{R}^{n\times d}\),  we use \(A^{\top}\) for its transpose,  \(\operatorname{tr}(A)\) for its trace,   \(|A|\) for its Euclidean (Frobenius) norm, and we write \(A^2:=AA^{\top}\).  For $A,B\in\mathbb{R}^{n\times d}$, 
let $A \circ B:= \operatorname{tr}(A B^{\top})$. If \(A\) is positive semidefinite, its square root \(\sqrt{A}\) is defined via the singular value decomposition \(A=UBV^{\top}\), where \(U,V\) are orthogonal and \(B\) is diagonal. We set \(\sqrt{A}=UB^{1/2}V^{\top}\), with \(B^{1/2}\) denoting the diagonal matrix of square roots of the entries of \(B\).   For an open set \(\mathcal{O}\subset\mathbb{R}^n\),   \(C^k(\mathcal{O})\) is the space of real functions on \(\mathcal{O}\) with continuous derivatives up to order \(k\);  \(C^{1,2}([0,T]\times\mathcal{O})\) is the space of functions \(u\) on \([0,T]\times\mathcal{O}\) for which \(\partial_t u\), \(\partial_{x_i}u\), and \(\partial_{x_i x_j}u\) (for \(1\le i,j\le n\)) exist and are continuous.   For \(u\in C^2(\mathcal{O})\), denote by \(u_x\) its gradient and by \(u_{xx}\) its Hessian.  Finally, for \(a,b\in\mathbb{R}\) let \(a\wedge b:=\min\{a,b\}\), and let \(\mathbf{1}_A(x)\) be the indicator function of a set \(A\).

\section{Problem Formulation}\label{sec:jumpmodel}

\subsection{Exploratory formulation in reinforcement learning}
For a fixed time horizon $T\in(0,\infty)$, let $(\Omega,\F,\Fx,\Px)$ be a filtered probability space with the filtration $\Fx=(\F_t)_{t\in[0,T]}$ satisfying the usual conditions. On this probability space, the process $W=(W_t)_{t\in[0,T]}$ is an $m$-dimensional Brownian motion and the process ${N}=(N(t,z,r);~(z,r)\in\R^n\times\R_+)_{t\in[0,T]}$ is a Poisson point process independent of $W$, having the intensity measure $dt\nu(dz)dr$ for a $\sigma$-finite measure $\nu$ on ${\cal B}(\R^n\setminus\{0\})$ satisfying $\int_{\R^n\setminus \{0\}}(|z|^2\wedge1)\nu(dz)<\infty$. Let us consider the controlled jump-diffusion process that $X_0^u=x\in\R^n$, and for $t\in(0,T]$,
\begin{align}\label{eq:abstractmodel}
dX_t^u &= b(t,X_t^u,u_t)dt +\sigma(t,X_t^u,u_t)dW_t\nonumber\\
  &\quad+\int_{\R^n\times\R_+} \varphi(t,X_{t-}^u,u_{t},z){\bf 1}_{\{r\leq \lambda(t,X_{t-}^u,u_t)\}} N(dt,dz,dr),
\end{align} where $u=(u_t)_{t\in[0,T]}$ is an $\Fx$-predictable process taking values on  $U\subseteq\R^d$, and the set of admissible controls is denoted by $\mathcal{U}$. Here, $b(t,x,u):[0,T]\times \R^n\times U\to\R^n$, $\sigma(t,x,u):[0,T]\times \R^n\times U\to\R^{n\times m}$, $\varphi(t,x,u,z):[0,T]\times \R^n\times U\times \R^n\to\R^n$ and $\lambda(t,x,u):[0,T]\times \R^n\times U\to\R_+$ are assumed to be measurable. Here, we consider the general jump intensity that is allowed to depend on the state and control. 

We are interested in the stochastic control problem, in which the agent aims to find an optimal control $u^*\in\mathcal{U}$ to maximize the following objective function that
\begin{align}\label{eq:objective}
    J(t,x;u):= \Ex_t^{\mathbb{P}}\left[\int_t^{T}f(s,X_s^u,u_s)ds+g(X_T^u)\right],\quad \forall u\in\mathcal{U},
\end{align}
where $f(t,x,u):[0,T]\times \R^n\times U\to\R$ stands for the  running reward function,  $g(x):\R^n\to \R$ is the terminal reward function, and the expectation operator  $\Ex_t^{\mathbb{P}}[\cdot]:=\Ex^{\mathbb{P}}\left[\cdot|X_t=x\right]$.
Given the full knowledge of the coefficients $b,\sigma,\varphi,f,g$, and the intensity measure $\nu$ in \eqref{eq:abstractmodel}-\eqref{eq:objective}, the classical methods such as the dynamic programming principle and the stochastic maximum principle can be employed to solve the above optimal control problem \eqref{eq:abstractmodel}-\eqref{eq:objective}. However, in reality, the decision maker may have limited or no information about the environment (i.e., $b,\sigma,\varphi,f,g,\nu$ are {\it unknown}). The reinforcement learning approach provides an efficient way to learn the optimal control in \eqref{eq:abstractmodel}-\eqref{eq:objective} in the unknown environment through the repeated trial-and-error procedure by taking actions and interacting with the environment.  Specifically, he tries a sequence of actions $u=(u_t)_{t\in[0,T]}$ and observe the corresponding state process $X=(X^{u}_t)_{t\in [0,T]}$ along with a stream of running rewards $(f(t,X^{u}_t,u_t))_{t\in[0,T]}$ and the terminal reward $g(X_T^u)$, and continuously update and improve his or her actions based on these observations. 

To describe the exploration step in reinforcement learning, we can randomize the action $u$ and consider its  probability density function. Let ${\cal P}(U)$ be the set of  probability density functions on $U$ and $\pi=(\pi_t)_{t\in[0,T]}$  be a given stochastic feedback policy with $\pi_t=\pi(\cdot|t,x)\in {\cal P}(U)$ for any $t\in[0,T]$.
For a Borel space $(E,{\cal B}(E))$, consider another probability space $(\Omega^\xi, \F^\xi, \Px^{\xi})$ and a measurable function $\phi:[0,T]\times \mathbb R^n\times E\to  U$ such that, for all $(t,x)\in [0,T]\times \mathbb R^n$, the $U$-valued random variable $\omega\to \phi(t,x,\xi(\omega))$ has the probability density $\pi(\cdot|t,x)$. Let $\mathbb N_0 =\mathbb N\cup\{0\}$ and $(\Omega^{\xi_{n_0}}, \F^{\xi_{n_0}}, \Px^{\xi_{n_0}},   \xi_{n_0})_{n_0\in \mathbb N_0}$ be independent copies of $(\Omega^\xi, \F^\xi, \Px^{\xi},   \xi) $. 
Introduce the following probability space with the form given by
\begin{align}
	\label{eq:space}
	(\Omega', \F',\Qx):=\left(\Omega \times  \prod_{n_0=0}^\infty \Omega^{\xi_{n_0}}, \F\otimes \bigotimes_{n_0=0}^\infty
	\F^{\xi_{n_0}},
	\Px \otimes
	\bigotimes_{n_0=0}^\infty
	\Px^{\xi_{n_0}}\right),
\end{align}
 and denotes $\Fx'=(\mathcal{F}_t \vee \F'_t)_{t\in[0,T]}$. Furthermore, consider the time grid $ G_{t:T}:= \{t=s_0<s_1<\cdots <s_{n_0} =T\} $ and sample actions from $\pi$ only at the grid points in $ G_{t:T}$.  The  state process satisfies, for all $i=0,\ldots, n_0-1$ and $s\in [s_i,s_{i+1}]$,
\begin{align}\label{X}
X_s^{u} &= X_{s_i}^{u} + \int_{s_i}^s b(\ell, X_\ell^{u},u_{s_i}) d\ell +
		\int_{s_i}^s \sigma(\ell, X_\ell^{u},u_{s_i})d W_\ell\nonumber\\
		&\qquad+\int_{s_i}^s \int_{\R^n\times\R_+} \varphi(\ell,X_{\ell-}^u,u_{s_i},z) {\bf 1}_{\{r\leq \lambda(\ell,X_{\ell-}^u,u_{s_i})\}}N(d\ell,dz,dr),\\
u_{s_i}&= \phi(s_i,X_{s_i},\xi_i).     \nonumber
\end{align}
Note that, we can also rewrite SDE \eqref{X} as follows:
\begin{align}\label{eq:X-pi}
	d X_s^{\pi} &= b(s,X_s^{\pi},u_{s}^{\pi})d s + \sigma(s,X_s^{\pi},u_s^{\pi}) d W_s +\int_{\R^n\times\R_+} \varphi(s,X_{s-}^{\pi},u_{s}^{\pi},z) {\bf 1}_{\{r\leq \lambda(s,X_{s-}^u,u_{s})\}}N(ds,dz,dr),\nonumber\\
 u_{s}^{\pi} &= u_{\delta(s)}~\text{with~$\delta(s) = s_i$ when $s\in [s_i,s_{i+1})$}.   
\end{align}

To encourage the exploration in RL, we consider the so-called Tsallis entropy regularization. Then, the objective functional is defined by
\begin{align}\label{eq:J-pi}
J^0(t, x ; \pi)= & \mathbb{E}_t^{\Qx}\left[\int_t^{T}\left(f\left(s,X_s^{\pi}, u_s^{\pi}\right)+\gamma l_p(\pi\left(u_s^{\pi})\right)\right)ds+g(X_T^{\pi})\right],
\end{align} 
where  $\gamma>0$ stands for the temperature parameter, and  the expectation operator $\Ex_t^{\mathbb{Q}}[\cdot]:=\Ex^{\mathbb{Q}}\left[\cdot|X_t=x\right]$. The Tsallis entropy with order $p\geq1$ is defined by, for $z\in\R_+$,
\begin{align}\label{eq:Tsallis}
l_p(z)=\begin{cases}
\displaystyle \frac{1}{p-1}(1-z^{p-1}), &p>1,\\[0.6em]
\displaystyle~~~~~~ -\ln z, &p=1.
\end{cases}
\end{align}
By observing \eqref{eq:Tsallis}, the Tsallis entropy with order $p\geq1$ generalizes the Shannon entropy (\citealt{tsallis1988possible}). Here, $p$ is called the entropy index, and $p=2$ corresponds to the sparse Tsallis entropy \citep{lee2018sparse}.

However, the representation \eqref{eq:X-pi}-\eqref{eq:J-pi} cannot  be applied to derive exploratory HJB equation directly from the point of view of DPP. To this purpose, it is necessary to provide the relaxed version of the control problem.
Inspired by \cite{kushner2000jump} and \cite{gao2024reinforcement} , let us introduce the process $\tilde{X}=(\tilde{X}_s)_{s\in[t,T]}$ satisfying the following SDE, $\tilde{X}_t^{\pi}=x$, and for $s\in(t,T]$,

\begin{align}\label{eq:martingale-measure}
    d\tilde{X}_s^{\pi} &=\int_{U} b(s,\tilde{X}_s^{\pi},u)\pi_s(u)duds + \sqrt{\int_U \sigma^2(s,\tilde{X}_s^{\pi},u)\pi_s(u) du}dB_s\nonumber\\
    &\quad + \int_{U} \int_{\R^n\times\R_+}\varphi(s,\tilde{X}_{s-}^{\pi},u,z){\bf 1}_{\{r\leq \lambda(s,\tilde{X}_{s-}^{\pi},u)\}} \mathcal{N}(ds,dz,dr,du),
\end{align}
where $B=(B_t)_{t\in[0,T]}$  is a scalar Brownian motion and $\mathcal{N}(ds,dz,dr,du)$ is a Poisson random measure  on $\R_+\times{\R}^n\times\R_+\times U$ with compensator $ds\nu(dz)dr\pi_s(u)du$ independent of $B$. An interesting finding is that, for the pure jump controlled state model, the representation \eqref{eq:martingale-measure} of the relaxed controlled state process can be applied to derive exploratory HJB equations directly from the point of view of DPP, which is different from the controlled diffusion case as in \cite{wang2020reinforcement}. Therefore, we can formulate our reinforcement learning problem for the jump-diffusion controlled model \eqref{eq:abstractmodel} based on the relaxed control form \eqref{eq:martingale-measure}.  Thus, our reinforcement learning problem associated with the jump-diffusion controlled state process \eqref{eq:abstractmodel}  can be stated as follows:
\begin{align}\label{eq:RL-problem}
&V(t,x):=\sup_{\pi\in\Pi_t} {J}(t,x;\pi):=\sup_{\pi\in\Pi_t}\Ex_t^{\Qx}\left[\int_t^{T}\left( \tilde{f}(s,\tilde{X}_s^{\pi},\pi)+\gamma \tilde{l}_p(s,x,\pi)\right)ds+g(\tilde{X}_T^{\pi})\right],\\
&~~{\rm s.t.}~\tilde{X}_s^{\pi}=x+\int_t^s \tilde{b}(\ell,\tilde{X}_\ell^{\pi},\pi_{\ell})d\ell + \int_t^s \tilde{\sigma}(\ell,\tilde{X}_\ell^{\pi},\pi_\ell)dB_\ell\nonumber\\
&\qquad\qquad\qquad+ \int_t^s\int_{U} \int_{\R^n\times\R_+}\varphi(\ell,\tilde{X}_{\ell-}^{\pi},u,z){\bf 1}_{\{r\leq \lambda(\ell,\tilde{X}_{\ell-}^{\pi},u)\}} \mathcal{N}(d\ell,dz,dr,du).\nonumber
\end{align}
Here, $\Pi_t$ is the set of admissible (randomized) policies on $U$ and the coefficients $\tilde{b},\tilde{\sigma},\tilde f, \tilde l_p$ are defined by, for $(s,x,\pi)\in[0,T]\times \R^n\times {\cal P}(U)$, 
\begin{align*}
&\tilde{b}(s,x;\pi):=\int_{U} b(s,x,u)\pi(u|s,x)du,\quad \tilde{\sigma}(s,x;\pi):= \sqrt{\int_U \sigma^2(s,x,u)\pi(u|s,x)du},\\
 &\tilde{f}(s,x;\pi):=\int_{U} f(s,x,u)\pi(u|s,x)du,\quad \tilde{l}_p(s,x;\pi):=\int_{U} l_p(\pi(u|s,x))\pi(u|s,x)du.
\end{align*}

Next, we provide the precise definition of admissible policies as follows:
\begin{definition}\label{def:admissible-pi}
 A policy $\pi=\pi(\cdot|
 \cdot,\cdot)$ is admissible,  i.e., $\pi\in\Pi_t$ with $t\in[0,T]$, if it holds that
\begin{itemize}
\item[{\rm(i)}]   $\pi(\cdot|s,x)\in{\cal P}( U)$ for all $(s,x)\in[t,T]\times \R^n$, and $\pi(\cdot|\cdot,\cdot):U\times [t,T]\times \R^n \to \R$ is a measurable function;
\item[{\rm(ii)}]$\pi(\cdot|s,x)$ is continuous in $(s,x)$ in the sense that $\int_U |\pi(u|s,x)-\pi(u|s',x')|du\to 0$ as $(s',x')\to (s,x)$. Moreover, there is a constant $C>0$  independent of $(s, u)$  such that 
\begin{align*}
    \int_{U}\left|\pi(u | s, x_1)-\pi\left(u|s, x_2\right)\right| d u \leq C\left|x_1-x_2\right|,\quad \forall x_1,x_2\in\R^n.
\end{align*}
\item[(iii)]For any $k \geq 1$ and $(s,x,x')\in[t,T]\times\R^{2n}$, there exist constants $C,C_k,C_p>0$ and $M_p \geq 1$ such that 
\begin{align*}
&\int_U |u|^{k}\pi(u|s,x)du<C_k(1+|x|^{k}),\quad \int_U l_p(\pi(u|s,x))\pi(u|s,x)du<C_p(1+|x|^{M_p}),\\
&\qquad\quad\int_U\int_{\R^n} |\varphi(s,x',u,z)\lambda(s,x',u)||\pi(u|s,x)-\pi(u|s,x')|\nu(dz)du\leq C|x-x'|.
\end{align*}
\end{itemize}
\end{definition}

To ensure the well-posedness of the stochastic control problem \eqref{eq:RL-problem}, we impose the following assumptions:
\begin{itemize}    
\item[{\bf(A$_{1}$)}] There exists $C>0$ such that, for all $(t,x_1,x_2,x,u)\in[0,T]\times\R^{3n}\times U$, 
\begin{align*} 
\left|b(t,x_1,u)-b(t,x_2,u)\right|+|\sigma(t,x_1,u)-\sigma(t,x_2,u)|\leq C|x_1-x_2|,&\nonumber\\
\left|b(t,x,u)\right|+\left|\sigma(t,x,u)\right|\leq C(1+|x|+|u|)&.
\end{align*}

\item[{\bf(A$_{2}$)}] For any $k \geq 1$, there exist $C,C_{k}>0$ such that, for all $(t,x_1,x_2,u)\in[0,T]\times\R^{2n}\times U$, 
\begin{align*} 
\int_{\R^n}|\varphi(t,x_1,u,z)-\varphi(t,x_2,u,z)||\lambda(t,x_1,u)|\nu(dz)&\leq C|x_1-x_2|,\\
\int_{\R^n}|\varphi(t,x_2,u,z)||\lambda(t,x_1,u)-\lambda(t,x_2,u)|\nu(dz)&\leq C|x_1-x_2|,\\
\int_{\R}|\varphi(t,x,u,z)|^k\lambda(t,x,u)\nu(dz)&\leq C_{k}(1+|x|^{k}+|u|^k).
\end{align*}

\item[{\bf(A$_{3}$)}]  The functions $f$ and $g$ are continuous satisfying the polynomial growth in the sense that, for some constants $C>0$ and $k \geq 1$,
\begin{align*}
|f(t,x,u)|+|g(x)|&\leq C(1+|x|^{k}+|u|^{k}),~~ \forall (t,x,u)\in[0,T]\times\R^n\times U.
\end{align*}
\end{itemize}

\subsection{Weak convergence of sampled dynamics to relaxed version}

This subsection shows that when actions are sampled at a discrete time grid, the resulting state process \eqref{eq:X-pi} converges weakly to the relaxed version of state process \eqref{eq:martingale-measure} as the time stepsize goes to zero. The following lemma establishes the existence and uniqueness of strong solution to the sampled dynamics  \eqref{eq:X-pi} and the relaxed version of dynamics \eqref{eq:martingale-measure}.

\begin{lemma}\label{lem:solution}
Let Assumptions {\bf(A$_{1}$)} and {\bf(A$_{2}$)} hold. Consider $(t,x)\in[0,T]\times\R^n$, $\pi\in \Pi_t$ and $(\Omega', \F',\Fx',\Qx)$ which is the probability space specified in \eqref{eq:space}. Then, we have
\begin{itemize}
\item[{\rm(i)}] for any grid $G_{t:T}$, the SDE \eqref{eq:X-pi} with $X_t^{\pi}=x$ has a unique strong solution. Moreover, for all $k\geq 2$, there exists a constant $C_k>0$ such that $\Ex^{\Qx}[|X^{\pi}_s|^k]\leq C_k(1+|x|^k)e^{C_k s}$ for any grid $G_{t:T}$ and $s\in[t,T]$.
\item[{\rm(ii)}] the SDE \eqref{eq:martingale-measure} with $\tilde{X}_t^{\pi}=x$ has a unique strong solution.
\end{itemize}
\end{lemma}

\begin{proof}
For simplicity, let $C>0$ be a generic constant independent of $(s,x,u)$, and $C_k$ be a generic constant independent of $(s,x,u)$ but depending on $k$, which may differ from line to line. We first show item (i). Fix a grid $G_{t:T}$. By using Assumption {\bf(A$_{2}$)},  we deduce that, for any $(s,x_1,x_2,u)\in[t,T]\times\R^{2n}\times U$, 
\begin{align}\label{eq:lip-jump}
&\left|\int_{\R^n}\left(\varphi(s,x_1,u,z)\lambda(s,x_1,u)-\varphi(s,x_2,u,z)\lambda(s,x_2,u)\right)\nu(dz)\right|\nonumber\\
&\leq \int_{\R^n}|\varphi(s,x_1,u,z)-\varphi(s,x_2,u,z)|\lambda(s,x_1,u)\nu(dz)\nonumber\\
&\quad+\int_{\R^n}|\varphi(s,x_2,u,z)||\lambda(s,x_1,u)-\lambda(s,x_2,u)|\nu(dz)\leq C|x_1-x_2|.
\end{align}
As $x \in \mathbb{R}$ is deterministic, in view of \eqref{eq:lip-jump} and Assumptions (\textbf{A$_1$})–(\textbf{A$_2$}) and following arguments in the proof of Theorem 1.2 in \cite{graham1992mckean} and Theorem 3.2 in \cite{kunita2004stochastic}, it holds that SDE \eqref{eq:X-pi} with $X_{t}^{\pi} = x$ admits a unique strong solution on $[s_0, s_1]$ satisfying
\begin{align*}
\sup_{s\in[s_0,s_1]}\Ex^{\Qx}\left[\left|X_s^{\pi}\right|^{k}\right]\leq C_k(1+|x|^k), \quad \forall k\geq 2.
\end{align*}

We next assume that SDE \eqref{eq:X-pi} with $X_t^{\pi}=x$ has a unique strong solution on $[s_0,s_i]$ for some $i=1,\ldots,n_0-1$ satisfying $\sup_{s\in[s_0,s_i]}\Ex^{\Qx}[|X_s^{\pi}|^{k}]\leq C_k(1+|x|^k)$. On the time interval $[s_{i},s_{i+1}]$, as $\Ex^{\Qx}[|X_{s_i}^{\pi}|^{k}]\leq C_k(1+|x|^k)<\infty$, we have, for all $s\in[s_{i},s_{i+1}]$,
\begin{align}\label{eq:growth-u}
\Ex^{\Qx}\left[\left|u_{s}^{\pi}\right|^k \right]=\Ex^{\Qx}\left[\int_{U}|u|^k\pi(u|s_i,X_{s_i}^{\pi})du \right]&\leq C_k\left\{1+\Ex^{\Qx}\left[\left|X_{s_i}^{\pi}\right|^k \right]\right\}\leq C_k(1+|x|^k).
\end{align}
By applying \eqref{eq:growth-u} and the same arguments in \cite{graham1992mckean}  and   \cite{kunita2004stochastic} again, we get the existence and uniqueness of strong solution to SDE \eqref{eq:X-pi}. Furthermore, it holds that
\begin{align*}
\sup_{s\in[s_i,s_{i+1}]}\Ex^{\Qx}\left[\left|X_s^{\pi}\right|^{k}\right]\leq C_k\left\{1+\Ex^{\Qx}\left[\left|X_{s_i}^{\pi}\right|^{k}\right]\right\}\leq C_k(1+|x|^k), \quad \forall k\geq 2.
\end{align*}
By induction, we establish the well-posedness of SDE \eqref{eq:X-pi} with the strong solution.

Next, we handle item (ii). From the proof of Lemma 2 in \cite{jia2022policymm}, for all $(s,x_1,x_2)\in[t,T]\times\R^{2n}$, it follows that
\begin{align}\label{eq:Lip-1} 
|\tilde{b}(s,x_1;\pi)-\tilde{b}(s,x_2;\pi)|+|\tilde{\sigma}(s,x_1;\pi)-\tilde{\sigma}(s,x_2;\pi)|&\leq C|x_1-x_2|,
\end{align}
and for all $(s,x)\in[t,T]\times\R^{n}$,
\begin{align}\label{eq:growth-1}
|\tilde{b}(s,x;\pi)|+|\tilde{\sigma}(s,x;\pi)|&\leq C(1+|x|).
\end{align}
For $({t},x,x')\in[0,T]\times\R^{2n}$, it follows from $\pi \in\Pi_t$, Assumption {\bf(A$_{2}$)} and \eqref{eq:lip-jump} that
\begin{align}\label{eq:Lip-2} 
&\left|\int_{U}\int_{\R}\varphi(t,x,u,z)\lambda(t,x,u)\nu(dz)\pi(u|t,x)du-\int_{U}\int_{\R}\varphi(t,x',u,z)\lambda(t,x',u)\nu(dz)\pi(u|t,x')du\right|\nonumber\\
&\qquad\leq \int_{U}\int_{\R}\left|\varphi(t,x,u,z)\lambda(t,x,u)-\varphi(t,x',u,z)\lambda(t,x',u)\right|\nu(dz)\pi(u|t,x)du\nonumber\\
&\qquad\quad+\int_{U}\int_{\R}|\varphi(t,x',u,z)\lambda(t,x',u)\left|\pi(u|t,x)-\pi(u|t,x')\right|\nu(dz)du\nonumber\\
&\qquad\leq \int_{U}C|x-x'|\pi(u|t,x)du+C|x-x'|\nonumber\\
&\qquad \leq C|x-x'|,
\end{align}
and for $(t,x)\in[0,T]\times\R^{n}$,
\begin{align}\label{eq:growth-2}
\int_{U}\int_{\R^n}|\varphi(t,x,u,z)|^k\lambda(t,x,u)\nu(dz)\pi(u|t,x)du&\leq \int_{U}C_{k}(1+|x|^{k}+|u|^k)\pi(u|t,x)du\nonumber\\
&\leq C_{k}(1+|x|^{k}).
\end{align}
In view of \eqref{eq:Lip-1}-\eqref{eq:growth-2} and  Theorem 1.2 in \cite{graham1992mckean}, the SDE \eqref{eq:martingale-measure} with $\tilde{X}_t^{\pi}=x$ admits a unique strong solution.
\end{proof}

Similar to \cite{jia2025accuracy}, we introduce the following notations for the convergence analysis. For each $\ell \in \mathbb{N}$ and $k>0$, a function $v:[0,T]\times \R^n\to \R$ belongs to $C^{\ell}_k([0,T]\times\R^{n})$ provided for all $\ell_1,\ell_2\in\mathbb{N}_0$ satisfying $2\ell_1+\ell_2 \le \ell$, the partial derivative $\partial_t^{\ell_1}\partial_x^{\ell_2} v$ exists, is continuous on $[0,T]\times\R^{n}$, and all such derivatives satisfy a polynomial growth condition in $x$:
\begin{align*}
\|v\|_{C_k^\ell} := \sum_{\substack{2\ell_1+\ell_2\le \ell}} \sup_{(t,x)\in [0,T]\times\R^{n}} \frac{\left|\partial_t^{\ell_1}\partial_x^{\ell_2} v(t,x)\right|}{1+|x|^k} < \infty.
\end{align*}
Let $C^{\ell,0}_k([0,T]\times\R^{n}\times U)$ be the space of functions $v: [0,T]\times \R^n \times U\to \R$ such that, for all $\ell_1,\ell_2\in \mathbb{N}_0$ satisfying $2\ell_1+\ell_2\le \ell$, the partial derivative $\partial_t^{\ell_1}\partial_x^{\ell_2} v(t,x,u)$ exists, is continuous in $(t,x)$ for all $(t,x,u)\in [0,T]\times\R^{n}\times U$, and all such derivatives satisfy a polynomial growth condition in $x$ and $u$:
\begin{align*}
\|v\|_{C_k^{\ell,0}} := \sum_{\substack{2\ell_1+\ell_2\le \ell}} \sup_{(t,x,u)\in [0,T]\times\R^n \times U} \frac{\left|\partial_t^{\ell_1}\partial_x^{\ell_2} v(t,x,u)\right|}{1+|x|^k + |u|^k} < \infty.
\end{align*}
To establish the weak convergence of the sampled dynamics to the relaxed version of the dynamics, we impose the following assumptions on the regularity of coefficients.
\begin{itemize}    
\item[{\bf(A$_{4}$)}] $b,\sigma^2\in C^{2,0}_k([0,T]\times \R^{n}\times U)$,  $\lambda(\cdot,\cdot,u),\varphi(\cdot,\cdot,u,z)\in C^{1,2}([0,T]\times\R^{n})$ for any $(u,z)\in U\times\R^{n}$, and  there exists some constant $C>0$ such that, for all $(t,x,u,z)\in[0,T]\times\R^{n}\times U\times\R^{n}$, 
\begin{align*} 
|\varphi_t(t,x,u,z)|+|\varphi_x(t,x,u,z)|+|\varphi_{xx}(t,x,u,z)|\leq C,\\
|\lambda(t,x,u)|+|\lambda_t(t,x,u)|+|\lambda_x(t,x,u)|+|\lambda_{xx}(t,x,u)|\leq C.
\end{align*}

\item[{\bf(A$_{5}$)}] for $\pi\in\Pi_0$, $\tilde{b},\tilde{\sigma},\varphi,\nu$ are sufficiently regular in the sense that, for any $h\in C_k^4(\R^n)$ and $t'\in[0,T]$, the following PDE, for $(t,x)\in[0,t')\times\R^n$,
\begin{align}\label{eq:PDE-v}
\begin{cases}
\displaystyle v_t(t,x)+\tilde{b}(t,x;\pi)\cdot v_x(t,x) +\frac{1}{2}\tilde{\sigma}^2(t,x;\pi) \circ v_{xx}(t,x)\\
\displaystyle\quad+\int_U\int_{\R^n}\left(v\left(t,x+\varphi(t,x,u,z)\right)-v(t,x)\right)\lambda(t,x,u)\nu(dz)\pi(u|t,x)du=0,\\[0.6em]
\displaystyle v(t',x)=h(x),~~\forall x\in\R^n,
\end{cases}
\end{align} 
has a unique solution $v^{h}(\cdot,\cdot;t')\in C_k^4([0,t']\times\R^n)$. Moreover, there  exists a constant $C>0$ depending only on $(T,k,\tilde{b},\tilde{\sigma},\varphi,\pi)$ such that $\|v^{h}(\cdot,\cdot;t')\|_{C_k^4}\leq C\|h\|_{C_k^4}$.
\item[{\bf(A$_{6}$)}] $f\in C_{k}^{2,0}$, $\tilde{f}\in C_k^2$ and $\tilde{f}(t,\cdot),g\in C_k^4(\R^n)$ for any $t\in[0,T]$.
\end{itemize}

\begin{remark}\label{rem:assumption-A4}
Note that Assumption {\bf(A$_{5}$)}  involves the solvability of PDE \eqref{eq:PDE-v}, which must be checked in each specific case. Here, we can provide some examples in which Assumption {\bf(A$_{5}$)} holds. Let us consider the case where the jump size function $\varphi(t,x,u,z)$ is independent of the control variable $u$ (thus we omit $u$ in the functions $\varphi$ and $\lambda$) and satisfies $\varphi(t,x,0)=0$ for all $(t,x)\in[0,T]\times \R^n$, and the function $\lambda(t,x)\equiv 1$. Furthermore, for $\pi\in\Pi_0$, assume that the functions  $\tilde{b},\tilde{\sigma}\in C^{4}([0,T]\times \R^n), \varphi\in C^{4}([0,T]\times\R^n\times \R^n)$ are all bounded and have bounded partial derivatives. Then, using the stochastic flow analysis (c.f. Theorem 3.4.1 and Theorem 3.4.2 in \citealt{kunita2019stochastic}), we can verify that, for  any $h\in C_k^4(\R^n)$ and $t'\in[0,T]$, the function $v^h(t,x):=\Ex[h(\tilde{X}_{t'}^{\pi})])\in C_k^4([0,t']\times\R^n)$ is the unique solution to PDE \eqref{eq:PDE-v}, where $\tilde{X}^{\pi}$ is given by \eqref{eq:martingale-measure} with $\tilde{X}_t^{\pi}=x$. Moreover, there  exists a constant $C>0$ such that $\|v^{h}\|_{C_k^4}\leq C\|h\|_{C_k^4}$.
\end{remark}

\begin{theorem}\label{thm:weak-convergence}
Let Assumptions {\bf(A$_{1}$)}, {\bf(A$_{2}$)}, {\bf(A$_{4}$)} and {\bf(A$_{5}$)} hold with $k\geq 2$. For $\pi\in \Pi_0$, consider the sampled dynamics  \eqref{eq:X-pi} and the relaxed version of dynamics \eqref{eq:martingale-measure} with initial value $x\in\R^n$. Then, for any $h\in C_k^4(\R^n)$, there exists a constant $C_k>0$ such that, for all grids $G_{0:T}$,
\begin{align}
\sup_{t\in[0,T]}\left|\Ex^{\Qx}\left[h(X_t^{\pi})\right]-\Ex^{\Qx}\left[h(\tilde{X}_t^{\pi})\right]\right|\leq C_k\|h\|_{C_k^4}\|G_{0:T}\|.
\end{align} 
Here, $\|G_{0:T}\|:=\max_{0\leq i\leq n_0-1}(s_{i+1}-s_i)$ is the mesh size of the time grid $G_{0:T}=\{0=s_0<s_1<\cdots<s_{n_0}=T\}$.
\end{theorem}

\begin{proof}
Without loss of generality, we only consider $n = 1$ for notational simplicity. By Assumption {\bf(A$_{5}$)}, for $h\in C_k^4(\R^n)$ and $t'=s_j$ with $1\leq j\leq n_0$, Eq.~\eqref{eq:PDE-v} has a unique solution $v^h(\cdot,\cdot;s_j)\in C_k^4([0,t']\times\R^n)$. For simplicity, we omit $h$, $s_j$, and $\pi$. It follows from the Feymann-Kac's formula that $\Ex^{\Qx}[h(\tilde{X}_t)]=v(0,x)$. Then, we have 
\begin{align}\label{eq:h-X-1}
\Ex^{\Qx}\left[h(X_{s_j})- h(\tilde{X}_{s_j})\right]&= \Ex^{\Qx}\left[ v(s_j,X_{s_j}) -v(0,x)\right]=\Ex^{\Qx}\left[\sum_{i=1}^j\left(v(s_i,X_{s_i})-v(s_{i-1},X_{s_{i-1}})\right)\right]\nonumber\\
&=:\sum_{i=1}^j\Ex^{\Qx}\left[I_i\right].
    \end{align}
By applying  It\^o's rule to $X$ from $s_{i-1}$ to $s_i$, we obtain 
\begin{align}\label{eq:h-X-2}
\Ex^{\Qx}\left[ I_i\right] 
=& \Ex^{\Qx}\Bigg[ \int_{s_{i-1}}^{s_i} \left( v_t(s,X_s)  + b(s,X_{s},u_{s_{i-1}}) v_x(s,X_s)+\frac{1}{2} \sigma^2(s,X_s,u_{s_{i-1}})v_{xx}(s,X_s) \right)ds\nonumber\\
&\quad+ \int_{s_{i-1}}^{s_i}\int_{\R^n}\left(v(s,X_s+\varphi(s,X_s,u_{s_{i-1}},z))-v(s,X_s)\right)\lambda(s,X_s,u_{s_{i-1}})\nu(dz) ds\Bigg].
\end{align}
For simplicity, let us introduce the function $H^v(s,x,u):[0,T]\times\R^n\times U\to \R$ given by
\begin{align*}
H^v(s,x,u):=\int_{\R^n}\left(v(s,x+\varphi(s,x,u,z))-v(s,x)\right)\lambda(s,x,u)\nu(dz).
\end{align*}
As $\xi_{i-1}$ is independent of $X_{s_{i-1}}$, and $v$ satisfies PDE \eqref{eq:PDE-v}, it holds that
\begin{align}\label{eq:h-X-3}
& \Ex^{\Qx}\Bigg[ \int_{s_{i-1}}^{s_i} \left( v_t(s_{i-1},X_{s_{i-1}})  + b(s_{i-1},X_{s_{i-1}},u_{s_{i-1}}) v_x(s_{i-1},X_{s_{i-1}})\right)ds\nonumber\\
&\quad+ \frac{1}{2}\int_{s_{i-1}}^{s_i}  \sigma^2(s_{i-1},X_{s_{i-1}},u_{s_{i-1}})v_{xx}(s_{i-1},X_{s_{i-1}}) ds+ \int_{s_{i-1}}^{s_i}H^v(s_{i-1},X_{s_{i-1}},u_{s_{i-1}})ds\Bigg]\nonumber\\
&=\Ex^{\Qx}\Bigg[ \int_{s_{i-1}}^{s_i} \left( v_t(s_{i-1},X_{s_{i-1}})  + \tilde{b}(s_{i-1},X_{s_{i-1}}) v_x(s_{i-1},X_{s_{i-1}})\right)ds\nonumber\\
&\qquad +\frac{1}{2}\int_{s_{i-1}}^{s_i} \tilde{\sigma}^2(s_{i-1},X_{s_{i-1}})v_{xx}(s_{i-1},X_{s_{i-1}}) ds+ \int_{s_{i-1}}^{s_i}\int_U H^v(s_{i-1},X_{s_{i-1}},u)\pi(u|s_{i-1},X_{s_{i-1}})  ds\Bigg]\nonumber\\
&=0.
\end{align}
It follows from \eqref{eq:h-X-2} and \eqref{eq:h-X-3} that
\begin{align}\label{eq:h-X-4}
\Ex^{\Qx}\left[ I_i\right]&= \Ex^{\Qx}\left[ \int_{s_{i-1}}^{s_i} \left( v_t(s,X_s) -v_t(s_{i-1},X_{s_{i-1}}) \right)ds\right]\nonumber\\
&\quad+ \Ex^{\Qx}\bigg[ \int_{s_{i-1}}^{s_i}b(s,X_{s},u_{s_{i-1}}) \left(  v_x(s,X_s) -v_x(s_{i-1},X_{s_{i-1}}) \right)ds\bigg]\nonumber\\
&\quad+ \Ex^{\Qx}\bigg[ \int_{s_{i-1}}^{s_i}\frac{1}{2}\sigma^2(s,X_{s},u_{s_{i-1}}) \left(  v_{xx}(s,X_s) -v_{xx}(s_{i-1},X_{s_{i-1}}) \right)ds\bigg]\nonumber\\
&\quad+ \Ex^{\Qx}\bigg[ \int_{s_{i-1}}^{s_i}v_x(s_{i-1},X_{s_{i-1}}) \left( b(s,X_{s},u_{s_{i-1}})  -b(s_{i-1},X_{s_{i-1}},u_{s_{i-1}}) ) \right)ds\bigg]\nonumber\\
&\quad+ \Ex^{\Qx}\bigg[ \int_{s_{i-1}}^{s_i}\frac{1}{2}v_{xx}(s_{i-1},X_{s_{i-1}}) \left( \sigma^2(s,X_{s},u_{s_{i-1}})  -\sigma^2(s_{i-1},X_{s_{i-1}},u_{s_{i-1}}) ) \right)ds\bigg]\nonumber\\
&\quad+  \Ex^{\Qx}\bigg[\int_{s_{i-1}}^{s_i}\left(H^v(s,X_{s},u_{s_{i-1}})-H^v(s_{i-1},X_{s_{i-1}},u_{s_{i-1}})\right)ds\bigg]\nonumber\\
&:=\Ex^{\Qx}\left[I_{i}^{(1)}\right]+\Ex^{\Qx}\left[I_{i}^{(2)}\right]+\Ex^{\Qx}\left[I_{i}^{(3)}\right]+\Ex^{\Qx}\left[I_{i}^{(4)}\right]+\Ex^{\Qx}\left[I_{i}^{(5)}\right]+\Ex^{\Qx}\left[I_{i}^{(6)}\right].
\end{align}
By using the fact $v\in C_k^4$ and Assumption {\bf(A$_{4}$)}, it is straightforward to check that  $H^v_t,H^v_x,H^v_{xx}$ have polynomial growth in $x$ and $u$. For $w=H,v_t,v_x,v_{xx},b(\cdot,\cdot,u_{s_{i-1}})$ or $\sigma^2(\cdot,\cdot,u_{s_{i-1}})$, we have from It\^o's rule that 
\begin{align}\label{eq:h-X-5}
&w(\ell,X_\ell)-w(s_{i-1},X_{s_{i-1}})\nonumber\\
&= \int_{s_{i-1}}^{\ell} \left( w_t(s,X_s)  + b(s,X_{s},u_{s_{i-1}}) w_x(s,X_s)+  \frac{1}{2} \sigma^2(s,X_s,u_{s_{i-1}})w_{xx}(s,X_s) \right)ds\nonumber\\
&\quad+ \int_{s_{i-1}}^{ \ell}\int_{\R^n}\left(w(s,X_s+\varphi(s,X_s,u_{s_{i-1}},z))-w(s,X_s)\right)\lambda(s,X_s,u_{s_{i-1}})\nu(dz) ds\nonumber\\
&\quad+ \int_{s_{i-1}}^{\ell}  \sigma(s,X_s,u_{s_{i-1}})w_{x}(s,X_s) dW_s\\
&\quad+ \int_{s_{i-1}}^{ \ell}\int_{\R^n\times\R_+}\left(w(s,X_{s-}+\varphi(s,X_{s-},u_{s_{i-1}},z))-w(s,X_{s-})\right){\bf 1}_{\{r\leq \lambda(s,X_{s-},u_{s_{i-1}})\}} \tilde{N}(ds,dz,dr)\nonumber
\end{align}
with $\tilde{N}(ds,dz,dr):=N(ds,dz,dr)-dr\nu(dz)ds$ being the compensated Poisson random measure. Using the fact $w\in C^2_k$ and Assumption {\bf(A$_{2}$)}, we deduce that
{\small\begin{align}\label{eq:h-X-6}
&\left|\int_{\R^n}\left(w(s,x+\varphi(s,x,u,z))-w(s,x)\right)\lambda(s,x,u)\nu(dz) \right|\leq M_k \left|\int_{\R^n}\left(1+|x|^k\right)|\varphi(s,x,u,z)|\lambda(s,x,u)\nu(dz) \right|\nonumber\\
&\qquad\leq M_k\left(1+|x|^k\right) \left|\int_{\R^n}|\varphi(s,x,u,z)|\lambda(s,x,u)\nu(dz) \right|\leq M_k(1+|x|^k+|u|^k),
\end{align}}where $M_k>0$ is a constant depending on $k$ that may vary from line to line. 

Similar to the proof of Theorem 4.1 in \cite{jia2025accuracy},  by using \eqref{eq:h-X-5} and \eqref{eq:h-X-6}, it is not difficult to verify that $\Ex^{\Qx}[|I_{i}^{(l)}|]$ for $l=1,\ldots,6$ can be bounded by $C_k\|v\|_{C_k^4}(s_{i}-s_{i-1})^2$. 
Thus, we have $\Ex^{\Qx}\left[ I_i\right]\leq C_k\|v\|_{C_k^4}(s_{i}-s_{i-1})^2$. Then, the desired result follows from $\sum_{i=1}^j(s_{i}-s_{i-1})^2\leq T\|G_{0:T}\|$.
\end{proof}

As a direct consequence of Theorem \ref{thm:weak-convergence}, we have the next result.
\begin{corollary}
Let Assumptions {\bf(A$_{1}$)}, {\bf(A$_{2}$)}, {\bf(A$_{4}$)}, {\bf(A$_{5}$)} and {\bf(A$_{6}$)} hold with $k\geq 2$. Given $(t,x)\in[0,T]\times\R^n$ and $\pi\in\Pi_t$, consider the corresponding objective functionals $J^0$ given by \eqref{eq:J-pi} and ${J}$ given by \eqref{eq:RL-problem}. If $l_p(\pi(\cdot|\cdot,\cdot))\in C_{k}^{2,0}$, $\tilde{l}\in C_k^2$ and $\tilde{l}_p(t,\cdot)\in C_k^4(\R^n)$ for any $t\in[0,T]$, there exists a constant $C_k>0$ only depending on $(k,b,\sigma,\varphi,\nu,f,g,\tilde{b},\tilde{\sigma},\tilde{f},\tilde{l}_p,\pi)$ such that $|J^0(t,x)-{J}(t,x)|\leq C_k\|G_{t:T}\|$ for all time grids $G_{t:T}$.
\end{corollary}

\subsection{Exploratory HJB equation and policy improvement iteration}

By applying dynamic programming arguments, the value function in \eqref{eq:RL-problem} satisfies the exploratory HJB equation given by
\begin{align}\label{eq:exHJB000}
V_t(t,x)+\sup_{\pi\in\mathcal{P}(U)}\Bigg\{& V_x(t,x)\cdot \int_{U} b(t,x,u)\pi(u)du+\frac{1}{2}\int_{U} \sigma^2(t,x,u)\pi(u)du \circ V_{xx}(t,x)\\
&\quad +\int_{U}\int_{\R^n}\left(V\left(t,x+\varphi(t,x,u,z)\right)-V(t,x)\right)\lambda(t,x,u)\nu(dz)\pi(u)du\nonumber\\
&\quad+\int_{U} (f(t,x,u)+\gamma l_p(\pi(u)))\pi(u)du\Bigg\}=0\nonumber
\end{align}
with the terminal condition $V(T,x)=g(x)$ for $x\in\R^n$. 

In order to characterize the optimal feedback policy, we introduce a scalar Lagrange multiplier $\psi(t,x):[0,T]\times \R^n\to \R$ to enforce the
constraint $\int_{U}\pi(u)du=1$, and a Karush–Kuhn–Tucker (KKT) multiplier $\xi(t,x,u):[0,T]\times \R^n\times U\to \R_+$ to enforce the constraint $\pi(u)\geq 0$. The Lagrangian is written by
\begin{align*}
{\cal L}(t,x;\pi)
&=\int_{U} \left(V_x(t,x)\cdot b(t,x,u)+\frac{1}{2}\sigma^2(t,x,u)\circ V_{xx}(t,x)\right)\pi(u)du\\
&\quad +\int_{U} \int_{\R^n}\left(V\left(t,x+\varphi(t,x,u,z)\right)-V(t,x)\right)\lambda(t,x,u)\nu(dz)\pi(u)du\\
&\quad +\int_{U} (f(t,x,u)+\gamma l_p(\pi(u)))\pi(u)du+\psi(t,x)\left(\int_{U}\pi(u)du-1\right)+\int_U \xi(t,x,u)\pi(u)du.
\end{align*}
We next discuss the candidate optimal feedback policy in terms of the entropy index $p\geq1$ by assuming that $V$ is a classical solution to the exploratory HJB equation \eqref{eq:exHJB000}:
\begin{itemize}
    \item The case $p>1$.  Using the first-order condition for the Lagrangian $\pi\to{\cal L}(t,x;\pi)$, we arrive at, the candidate optimal feedback policy is given by
\begin{align}\label{eq:pistarcadidate}
\pi_p^*(u|t,x)=\left(\frac{p-1}{p\gamma}\right)^{\frac{1}{p-1}}\left({\cal H}(t,x,u,V,V_x,V_{xx})+\psi(t,x)+\xi(t,x,u)\right)^{\frac{1}{p-1}},
\end{align}
where, the  Hamiltonian ${\cal H}(t,x,u,v)$ is defined as, for $(t,x,u)\in[0,T]\times \R^n\times U$ and $v\in C^{1,2}([0,T)\times \R^n)\cap  C([0,T]\times \R^n)$,
\begin{align}\label{eq:Hami}
{\cal H}(t,x,u,v,v_x,v_{xx})&:= b(t,x,u)\cdot v_x(t,x)+\frac{1}{2}\sigma^2(t,x,u)\circ v_{xx}(t,x)  +f(t,x,u)\nonumber\\
&\quad+\int_{\R^n}\left(v\left(t,x+\varphi(t,x,u,z)\right)-v(t,x)\right)\lambda(t,x,u)\nu(dz).
\end{align}

Then, it follows from the constraints on $\pi_p^*(u)\geq 0$ that
\begin{align}\label{eq:KKT-xi}
\mathcal{H}(t,x,u,V,V_x,V_{xx})+\psi(t,x)+\xi(t,x,u)\geq 0 .
\end{align}
As $\xi(t,x,u)$ is the KKT multiplier for the non‑negativity constraint, it satisfies $\xi(t,x,u)\geq 0$ and the complementary slackness condition $\xi(t,x,u)\,\pi^*_p(u)=0$.  If $\mathcal{H}(t,x,u,V,V_x,V_{xx})+\psi(t,x)\geq 0$, we may set $\xi(t,x,u)=0$ and \eqref{eq:KKT-xi} holds; if $\mathcal{H}(t,x,u,V,V_x,V_{xx})+\psi(t,x)<0$, then \eqref{eq:KKT-xi} forces $\xi(t,x,u)\geq -(\mathcal{H}(t,x,u,V,V_x,V_{xx})+\psi(t,x))>0$. However, complementary slackness implies that whenever $\xi(t,x,u)>0$, we must have $\pi^*_p(u)=0$. Hence, in this case, we must have  
\begin{align*}
\mathcal{H}(t,x,u,V,V_x,V_{xx})+\psi(t,x)+\xi(t,x,u)=0 .
\end{align*}
Combining the two cases yields that  
\begin{align}\label{eq:xitxu}
\xi(t,x,u)=(-{\cal H}(t,x,u,V,V_x,V_{xx})-\psi(t,x))_+,~~ \text{with}~ (x)_+:=\max\{x,0\}.
\end{align}
By plugging \eqref{eq:xitxu} into \eqref{eq:pistarcadidate}, we obtain
\begin{align}\label{eq:optimal-policy}
\pi_p^*(u|t,x)=\left(\frac{p-1}{p\gamma}\right)^{\frac{1}{p-1}}\left({\cal H}(t,x,u,V,V_x,V_{xx})+\psi(t,x)\right)_+^{\frac{1}{p-1}},
\end{align}
where, the Lagrange multiplier $\psi(t,x)$, 
which will be called \textit{normalizing function} from this point onwards, is determined by
\begin{align}\label{eq:psi}
\int_U\left(\frac{p-1}{p\gamma}\right)^{\frac{1}{p-1}}\left({\cal H}(t,x,u,V,V_x,V_{xx})+\psi(t,x)\right)_+^{\frac{1}{p-1}}du=1.
\end{align}
\item The case $p=1$. This case reduces to the conventional Shannon entropy case, in which the optimal feedback policy $\pi^*$ is a Gibbs measure given by
\begin{align}\label{eq:optimal-policy-Gibbs}
\pi_1^*(u|t,x) \propto\exp\left\{\frac{1}{\gamma}{\cal H}(t,x,u,V,V_x,V_{xx})\right\}.
\end{align}

\end{itemize}

\begin{remark}
For the general case when $p>1$, the optimal policy characterized in \eqref{eq:optimal-policy} and \eqref{eq:psi} may no longer be a Gibbs measure, and particularly may not be Gaussian even in the linear quadratic control framework. In fact, the distribution of the optimal policy now heavily relies on the expression of the normalizing function $\psi(t,x)$ in \eqref{eq:psi}. More importantly, the expression in \eqref{eq:optimal-policy} suggests that the density distribution of the optimal policy may not be supported on the whole $U$ in general, i.e., the sampled actions may concentrate on a compact set as the optimal policy is only defined on a compact subset of $U$; see the derived optimal policy distributions with compact support in Remarks \ref{example-one-distribution} and Remark~\ref{example-two-distribution} in two concrete examples.

\end{remark}

The next result shows that the candidate optimal policy given by \eqref{eq:optimal-policy} and \eqref{eq:optimal-policy-Gibbs} leads to an improvement iteration rule. Recall the objective function { $ J(t,x;\pi)$} for a fixed admissible policy $\pi$ given by \eqref{eq:RL-problem}. 
Then, if the objective function $ J(\cdot,\cdot;\pi)\in C^{1,2}([0,T)\times\R^n)\cap C([0,T]\times \R^n)$, it satisfies the following PDE:
\begin{align}\label{eq:PDE-J}
& J_t(t,x;\pi)+ J_x(t,x;\pi)\cdot \int_{U} b(t,x,u)\pi(u|t,x)du+\frac{1}{2} \int_{U} \sigma^2(t,x,u)\pi(u|t,x)du\circ J_{xx}(t,x;\pi)\nonumber\\
&\qquad\qquad\qquad+\int_{U}\int_{\R^n}\left( J\left(t,x+\varphi(t,x,u,z);\pi\right)- J(t,x;\pi)\right)\lambda(t,x,u)\nu(dz)\pi(u|t,x)du\nonumber\\
&\qquad\qquad\qquad\qquad\qquad\qquad\qquad+\int_{U} (f(t,x,u)+\gamma l_p(\pi(u|t,x)))\pi(u|t,x)du=0
\end{align}
with the terminal condition $ J(T,x:\pi)=g(x)$ for all $x\in\R^n$.

\begin{theorem}[Policy Improvement Iteration]\label{thm:PIT}
Let Assumptions {\bf(A$_{1}$)}, {\bf(A$_{2}$)} and {\bf(A$_{3}$)} hold. For any given $\pi \in \Pi_0$, assume that the objective function $\tilde  J(\cdot,\cdot;{\pi})\in C^{1,2}([0,T)\times\R^n)\cap C([0,T]\times \R^n)$ satisfies  \eqref{eq:PDE-J}, and for $p>1$, there exists a function $\psi(t,x):[0,T]\times \R^n\to \R$  satisfying
\begin{align}\label{eq:psi-PIT}
\int_U\left(\frac{p-1}{p\gamma}\right)^{\frac{1}{p-1}}\left({\cal H}(t,x,u, J, J_x, J_{xx})+\psi(t,x)\right)_+^{\frac{1}{p-1}}du=1,
\end{align}
where the Hamiltonian ${\cal H}$ is defined in \eqref{eq:Hami}.   We consider the following mapping ${\cal I}_p$  given by, for $\pi\in\Pi_{0}$,
\begin{align}\label{eq:I}
\mathcal{I}_p(\pi):= \left(\frac{p-1}{p\gamma}\right)^{\frac{1}{p-1}}\left({\cal H}(t,x,\cdot, J, J_x, J_{xx})+\psi(t,x)\right)_+^{\frac{1}{p-1}},\quad \forall p\geq 1,   
\end{align}
and ${\cal I}_1(\pi):=\lim_{p\downarrow1}{\cal I}_p(\pi)=\frac{\exp\left\{\frac{1}{\gamma}{\cal H}(t,x,\cdot, J, J_x, J_{xx})\right\}}{\int_{U}\exp\left\{\frac{1}{\gamma}{\cal H}(t,x,u, J, J_x, J_{xx})\right\}du}$.  Denote by $\pi^{\prime}={\cal I}_p(\pi)$ for $\pi\in\Pi_0$. If $\pi^{\prime}\in \Pi_0$, then $ J\left(t,x;\pi^{\prime}\right) \geq  J(t,x;\pi)$ for all $(t,x)\in[0,T]\times\R^n$. Moreover, if the mapping ${\cal I}_p$ has a fixed point $\pi^*\in\Pi_0$, then $\pi^*$ is the optimal policy that, for all $(t,x)\in[0,T]\times\R^n$, 
\begin{align*}
V(t,x)=\sup_{\pi\in\Pi_t}  J(t,x;\pi)= J(t,x;\pi^*).
\end{align*}
\end{theorem}

To prove Theorem \ref{thm:PIT}, we need the following auxiliary result.
\begin{lemma}\label{lem:optimal-policy}
Let $\gamma>0$ and $p\geq 1$. For a given function $q(u):U\to\R$, assume that there exists a  constant $\psi\in\R$ such that
\begin{align}\label{eq:psi-PIT2}
\int_U\left(\frac{p-1}{p\gamma}\right)^{\frac{1}{p-1}}\left(q(u)+\psi\right)_+^{\frac{1}{p-1}}du=1.
\end{align}
Then, $\pi^*(du)=\left(\frac{p-1}{p\gamma}\right)^{\frac{1}{p-1}}\left(q(u)+\psi\right)_+^{\frac{1}{p-1}}du$ is a probability measure on $U$, and it is the unique maximizer of the optimization problem:
\begin{align}\label{eq:prob-max}
\sup_{\pi\in{\cal P}(U)}\int_{U}\left(q(u)\pi(u)+\frac{\gamma}{p-1}\left(\pi(u)-\pi^p(u)\right)\right)du.
\end{align}
\end{lemma}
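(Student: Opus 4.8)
The plan is to treat \eqref{eq:prob-max} as a strictly concave maximization over the convex set $\mathcal{P}(U)$ and to verify that the candidate $\pi^*$ meets the associated first-order (KKT) conditions, so that concavity upgrades stationarity to global optimality and uniqueness. First I would dispose of the easy claim: the prefactor $\left(\frac{p-1}{p\gamma}\right)^{1/(p-1)}$ is positive and $(q(u)+\psi)_+^{1/(p-1)}\ge 0$, so $\pi^*\ge 0$, while hypothesis \eqref{eq:psi-PIT2} asserts exactly $\int_U \pi^*(u)\,du = 1$; hence $\pi^*\in\mathcal{P}(U)$. The real content is therefore optimality and uniqueness.

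Write the objective as $F(\pi)=\int_U G_u(\pi(u))\,du$, where the per-action integrand is, up to the temperature constant, the entropy-regularized density $G_u(z)=q(u)z+\gamma\, z\, l_p(z)=q(u)z+\frac{\gamma}{p-1}(z-z^p)$. For $p>1$ this is strictly concave in $z\ge 0$, since $z\mapsto z-z^p$ has second derivative $-p(p-1)z^{p-2}<0$; consequently $F$ is strictly concave on the convex set $\mathcal{P}(U)$, so any feasible point satisfying the KKT conditions is automatically the unique global maximizer. This reduces the lemma to checking those conditions for $\pi^*$.

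The decisive computation is that the supergradient of $G_u$ along the support of $\pi^*$ is constant in $u$. On $\{q(u)+\psi>0\}$ the definition of $\pi^*$ gives $\pi^*(u)^{p-1}=\frac{p-1}{p\gamma}(q(u)+\psi)$, i.e. $\frac{p\gamma}{p-1}\pi^*(u)^{p-1}=q(u)+\psi$, whence $G_u'(\pi^*(u))=q(u)+\frac{\gamma}{p-1}-\frac{p\gamma}{p-1}\pi^*(u)^{p-1}=\frac{\gamma}{p-1}-\psi=:C$, independent of $u$. I would then establish the supporting-hyperplane inequality $G_u(z)\le G_u(\pi^*(u))+C(z-\pi^*(u))$ for every $z\ge 0$ and a.e.\ $u$, integrate it against $du$, and use $\int_U\pi(u)\,du=\int_U\pi^*(u)\,du=1$ to annihilate the linear term, which yields $F(\pi)\le F(\pi^*)$ for all $\pi\in\mathcal{P}(U)$. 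Strict concavity forces equality only at $z=\pi^*(u)$, giving a.e.\ uniqueness. The case $p=1$ is handled identically with $G_u(z)=q(u)z-\gamma z\ln z$, again strictly concave, whose stationarity condition yields the Gibbs density $\pi^*(u)\propto e^{q(u)/\gamma}$, or by passing to the limit $p\downarrow 1$.

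The main obstacle is the compact support forced by the truncation $(\cdot)_+$: on $\{q(u)+\psi\le 0\}$ one has $\pi^*(u)=0$, the nonnegativity constraint is active, and $G_u$ is only one-sidedly differentiable at $0$, so the tangent inequality with the \emph{same} slope $C$ must be argued separately there. The right observation is that $G_u'(0^+)=q(u)+\frac{\gamma}{p-1}\le \frac{\gamma}{p-1}-\psi=C$ precisely because $q(u)+\psi\le 0$; combined with concavity and $z\ge 0$ this gives $G_u(z)\le G_u(0)+G_u'(0^+)z\le G_u(0)+Cz$, so the global inequality survives across the support boundary. This is exactly complementary slackness with multiplier $\xi(u)=(-(q(u)+\psi))_+$, and tracking the sign and temperature constants consistently with \eqref{eq:psi-PIT2} throughout is the part I would verify most carefully.
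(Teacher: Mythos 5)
Your proof is correct, and at its core it certifies the same pointwise-in-$u$ inequality that drives the paper's argument, but the two write-ups are organized along dual lines. The paper proceeds by weak duality: it appends a scalar multiplier $\xi$ times the normalization constraint, upper-bounds the constrained supremum by the integral of the decoupled pointwise suprema over $\pi(u)>0$, picks $\xi$ so that the pointwise maximizer is exactly $\pi^*(u)$, and concludes because $\pi^*$ is feasible by \eqref{eq:psi-PIT2}, so the chain of inequalities collapses to equality at $\pi^*$. You instead argue primally: you compute the constant supergradient $C=\frac{\gamma}{p-1}-\psi$ on $\{q+\psi>0\}$, check $G_u'(0^+)\le C$ on the complement (this is precisely complementary slackness), integrate the supporting-hyperplane inequality, and cancel the linear term using the shared normalization. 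These are two packagings of the same KKT system, but your route buys two things the paper's terse proof glosses over: a careful treatment of the boundary of the support, where the positivity constraint is active and the integrand is only one-sidedly differentiable (the paper simply asserts the pointwise maximizer, with no discussion of the truncation $(\cdot)_+$), and an actual uniqueness argument via strict concavity, which the paper states without justification. One further point in your favor: the objective as printed in \eqref{eq:prob-max} lacks the temperature $\gamma$ and carries a flipped sign on the entropy term --- as literally written the integrand is \emph{convex} in $\pi(u)$ for $p>1$ and the pointwise suprema are $+\infty$, so the paper's own manipulations only make sense after the same correction you made. Your reading $G_u(z)=q(u)z+\frac{\gamma}{p-1}\left(z-z^p\right)$, flagged as ``up to the temperature constant,'' is the one under which the stated maximizer $\pi^*$ and hypothesis \eqref{eq:psi-PIT2} are consistent, and your bookkeeping of the constants (in particular $C=\frac{\gamma}{p-1}-\psi$, versus the paper's multiplier $\xi=\psi+\frac{1}{p-1}$, whose sign and missing $\gamma$ are artifacts of the same typo) is the internally coherent one.
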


\begin{proof} 
Let $\xi=\psi-\frac{\gamma}{p-1}$, and consider the following optimization problem 
\begin{align}\label{eq:prob-max-2}
&\sup_{\pi\in{\cal P}(U)}\left[\int_{U}\left(q(u)\pi(u)+\frac{\gamma}{p-1}\left(\pi(u)-\pi^p(u)\right)\right)du+\xi\left(\int_{U}\pi(u)du-1\right)\right].
\end{align}
As $\int_{U}\pi(u)du=1$ for $\pi\in{\cal P}(U)$, the optimization problems \eqref{eq:prob-max} and \eqref{eq:prob-max-2} are  equivalent. For problem \eqref{eq:prob-max-2}, we have
\begin{align}\label{eq:prob-max-3}
&\sup_{\pi\in{\cal P}(U)}\left[\int_{U}\left(q(u)\pi(u)+\frac{\gamma}{p-1}\left(\pi(u)-\pi^p(u)\right)\right)du+\xi\left(\int_{U}\pi(u)du-1\right)\right]\nonumber\\
&\qquad=\sup_{\pi\in{\cal P}(U)}\int_{U}\left(q(u)\pi(u)+\xi\pi(u)+\frac{\gamma}{p-1}\left(\pi(u)-\pi^p(u)\right)\right)du-\xi\nonumber\\
&\qquad\leq\sup_{\pi(\cdot\geq 0}\int_{U}\left(q(u)\pi(u)+\xi\pi(u)+\frac{\gamma}{p-1}\left(\pi(u)-\pi^p(u)\right)\right)du-\xi\nonumber\\
&\qquad\leq  \int_{U}\sup_{\pi(\cdot)\geq 0}\left(q(u)\pi(u)+\xi\pi(u)+\frac{\gamma}{p-1}\left(\pi(u)-\pi^p(u)\right)\right)du-\xi.
\end{align}
We can then deduce that the unique maximizer of the optimization problem on the right-hand side of \eqref{eq:prob-max-3} is given by 
 \begin{align*}
 \pi^*(u)=\left(\frac{p-1}{p\gamma}\right)^{\frac{1}{p-1}}\left(q(u)+\psi\right)_+^{\frac{1}{p-1}},\quad  \text{for}~u\in U.
\end{align*}
 It follows from \eqref{eq:psi-PIT2} that $\pi^*\in{\cal P}(U)$, which completes the proof.
\end{proof}

By using Lemma \ref{lem:optimal-policy}, the proof of Theorem \ref{thm:PIT} is similar to that of Theorem 2 in \cite{jia2023q},  thus it is omitted here.
Note that the policy improvement iteration in Theorem \ref{thm:PIT} depends on the knowledge of model parameters. Thus, in order to devise a model free RL algorithm, we next generalize the q-leaning theory proposed in \cite{jia2023q} under Tsallis entropy regularization.

\section{Continuous-time q-Function and Martingale Characterization under Tsallis Entropy}\label{sec:q-func}

Given $\pi \in \Pi$ and $(t,x,u) \in [0,T]\times\R^n \times U$, let us consider a ``perturbed" policy of $\pi$, denoted by $\tilde{ \pi}$, as follows: for $\Delta t>0$, it takes the action $u\in U$ on $[t,t+\Delta t)$  and then follows $\pi$ on $[t+\Delta t, T]$. The resulting state process $X^{\tilde{\pi}}$ with $X_t^{\tilde{\pi}}=x$ can be split into two pieces. On $[t, t+\Delta t)$, $X^{\tilde{\pi}}=X^{u}$, which is the solution to the following equation: $X^{u}_t=x$, and for $s\in[t,t+\Delta t)$,
\begin{align*}
    dX_s^u = b(s,X_s^u,u_s)ds +\sigma(s,X_s^u,u_s)dW_s +{\int_{\R^n\times\R_+}\varphi(s,X_{s-}^u,u_s,z){\bf 1}_{\{r\leq \lambda(s,X_{s-}^u,u_s\}}N(ds,dz,dr)},
\end{align*}
while on $[t+\Delta t, T]$, $X^{\tilde{\pi}}=X^\pi$ by following \eqref{eq:X-pi} but with the initial time-state pair $(t+\Delta t, X_{t+\Delta t}^{u})$. For $\Delta t>0$, we consider the conventional Q-function with time interval $\Delta t$ that
\begin{align*}
& Q_{\Delta t}(t,x,u;\pi) \\
= & \mathbb{E}^{\mathbb{Q}}\left[\int_t^{t+\Delta t}f(s,X_s^u,u)ds+\Ex^{\mathbb{Q}}\left[\gamma\int_{t+\Delta t}^{T} l_p(\pi(u_s^{\pi}) ds+\int_{t+\Delta t}^{T} f(s,X_s^{\pi},u^{\pi})ds+g(X_T^{\pi})\Big| X_{t+\Delta t}^{u}\right]\Big| X_t^{\tilde{\pi}}=x\right] \\
= & \Ex^{\mathbb{Q}}\left[\int_t^{t+\Delta t} \left(\frac{\partial J}{\partial t}(s,X_s^u;\pi)+{\cal H}\left(s,X_s^u,u,J(\cdot,\cdot;\pi),J_x(\cdot,\cdot;\pi),J_{xx}(\cdot,\cdot;\pi)\right)\right) ds\right]+J(t,x;\pi)\\
= & J(t,x;\pi)+\left(\frac{\partial J}{\partial t}(t,x;\pi)+{\cal H}(t,x,u,J(\cdot,\cdot;\pi),J_x(\cdot,\cdot;\pi),J_{xx}(\cdot,\cdot;\pi)) \right) \Delta t+o(\Delta t),
\end{align*}
where we have used It\^o's rule in the above derivation.

We next define the q-function as the counterpart of the Q-function in the continuous time framework.

\begin{definition}[q-function]\label{def:q-function}
 The $q$-function of problem \eqref{eq:RL-problem} associated with a given policy $\pi \in \Pi_t$ is defined as, for all $(t,x,u) \in[0,T]\times\R^n\times U$,
\begin{align}\label{eq:q-function}
 q(t,x,u;\pi):=\frac{\partial J}{\partial t}(t,x;\pi)+{\cal H}(t,x,u,J(\cdot,\cdot;\pi),J_x(\cdot,\cdot;\pi),J_{xx}(\cdot,\cdot;\pi)).
\end{align}
\end{definition}
One can easily see that it is the first-order derivative of the conventional Q-function with respect to time that
\begin{align*}
 q(t,x,u;\pi)=\lim _{\Delta t \rightarrow 0} \frac{Q_{\Delta t}(t,x,u;\pi)- J(t,x;\pi)}{\Delta t}. 
\end{align*}
\begin{remark}\label{rem:q-func}
We also notice that the improved policy $\pi'$ in Theorem \ref{thm:PIT} can be represented in term of q-function  by
\begin{align*}
\pi'(u|t,x)= \left(\frac{p-1}{p\gamma}\right)^{\frac{1}{p-1}}\left(q(t,x,u;\pi)+\psi(t,x)\right)_+^{\frac{1}{p-1}},\quad \forall p\geq1,   
\end{align*}
where the Lagrange multiplier $\psi(t,x):[0,T]\times \R^n\to \R$  satisfies
\begin{align}\label{eq:psi-condition}
\int_U\left(\frac{p-1}{p\gamma}\right)^{\frac{1}{p-1}}\left(q(t,x,u;\pi)+\psi(t,x)\right)_+^{\frac{1}{p-1}}du=1.
\end{align}
A natural question is whether such a function $\psi(t,x)$ exists. We also emphasize that the function $\psi(t,x)$ can dependent on the policy $\pi$ in view of \eqref{eq:psi-condition}. In fact, given a policy $\pi\in\Pi_0$, for fixed $(t,x)\in[0,T]\times \R^n$, let us introduce the following mapping given by
\begin{align*}
a\mapsto F(a):=\int_U\left(\frac{p-1}{p\gamma}\right)^{\frac{1}{p-1}}\left(q(t,x,u;\pi)+a\right)_+^{\frac{1}{p-1}}du.
\end{align*}
Then, if the q-function satisfies some integral condition such that the mapping $a\mapsto F(a)$ is well-defined, then $a\mapsto F(a)$ is continuous and increasing with $F(a)\to -\infty$ as $a\to -\infty$ and $F(a)\to +\infty$ as $a\to +\infty$. This yields the existence and uniqueness of the function $\psi(t,x)$ satisfying \eqref{eq:psi-condition}.
\end{remark}

The following result gives the martingale characterization of the q-function under a given policy $ \pi$ when the value function is given. 

\begin{proposition}\label{prop:martingale-condition}
Suppose that Assumptions {\bf(A$_{1}$)}, {\bf(A$_{2}$)} and {\bf(A$_{3}$)} hold.  For a policy $\pi\in \Pi_0$, assume that its value function $J(\cdot,\cdot;\pi)\in C^{1,2}([0,T)\times \R^n)\cap C([0,T]\times\R^n)$. Given a continuous function $\hat{q}:[0,T]\times\R^n \times U \to \R$. Then, we have
\begin{itemize}
    \item [{\rm(i)}] 
$\hat{q}(t,x,u)=q(t,x,u;\pi)$ for all $(t,x,u) \in[0,T]\times \R^n\times U$ if and only if for all $(t,x) \in [0,T]\times \R^n$ and any time grid $ G_{t:T}$ on $[t,T]$, the following process
\begin{align}\label{eq:martinglae-J}
J\left(s,X_s^{\pi}; {\pi}\right)+\int_t^s (f(l,X_{l}^{\pi},u^{\pi}_{l})-\hat{q}(l,X_{l}^{\pi},u^{\pi}_{l}))dl,\quad s\in[t,T]
\end{align}
is an $(\Fx',\mathbb{Q})$-martingale. Here, $X^{\pi}=(X_s^{\pi})_{s\in[t,T]} $ is the solution to \eqref{eq:X-pi} under $\pi$ with $X_t^{ \pi}=x$. 
    \item [{\rm(ii)}] If $\hat{q}(t,x,u) = q(t,x,u;\pi)$ for all $(t,x,u) \in [0,T] \times \R^n \times {U}$, given any $\pi' \in \Pi_0$, for all $(t,x) \in [0,T] \times \R^n$ and any time grid ${G}_{t:T}$ on $[t,T]$, the following process
\begin{align}\label{eq:martinglae-J'}
   J\left(s, X_{s}^{ \pi'} ; \pi\right) + \int_{t}^{s}   \left[ f\left(l, X_{l}^{\pi'}, u_{l}^{ \pi'} \right) - \hat{q}\left(l, X_{l}^{ \pi'}, u_{l}^{\pi'} \right) \right] dl, \quad s\in[t,T]
\end{align}
is an  $(\Fx',\mathbb{Q})$-martingale.  Here, $X^{\pi'}=(X_s^{\pi'})_{s\in[t,T]} $ is the solution to \eqref{eq:X-pi} under $\pi'$ with $X_t^{ \pi'}=x$.

    \item [{\rm(iii)}]
 If there exists $\pi' \in \Pi_0$ such that for all $(t,x) \in [0,T] \times \R^n$ and any time grid ${G}_{t:T}$ on $[t,T]$, the process \eqref{eq:martinglae-J'} is an  $(\Fx',\mathbb{Q})$-martingale with initial condition $X_t^{ \pi'} = x$, then $\hat{q}(t,x,u) = q(t,x,u;\pi)$ for all $(t,x,u) \in [0,T] \times \R^n  \times U$.
\end{itemize}
Furthermore, in any of the three cases above, the $q$-function satisfies that
\begin{align}
   \int_{U} \left(q(t,x,u;\pi) + \gamma l_p( \pi(u|t,x)) \right) \pi(u|t,x) du = 0, \quad \forall (t,x) \in [0,T] \times \R^n.\label{eq:qgammar} 
\end{align}
\end{proposition}

\begin{proof}
(i) Applying It\^o's formula, it holds that, for $0\leq t<s\leq T$,
\begin{align*}
\qquad&J(s,X_s^{\pi};\pi)-J(t,x)+\int_t^s (f(l,X_l^{\pi},u_l^\pi)-\hat q (l,X_l^{\pi},u_l^\pi))dl\nonumber\\
=&\int_t^s \left[J_t(l,X_l^{\pi}) dl
+{\cal H}(l,X_l^{\pi},u_l^{\pi},J(l,X_l^{\pi}),J_x(l,X_l^{\pi}),J_{xx}(l,X_l^{\pi}))-\hat q (l,X_l^{\pi},u_l^\pi)\right]dl \nonumber\\
&\quad+ \int_t^s\int_{\R^n\times\R_+}\left(J(l,X_{l-}^\pi+\varphi(l,X_{l-}^\pi,u_{l}^\pi,z))-J(l,X_{l-}^\pi)\right) {\bf 1}_{\{r\leq \lambda(l,X_{l-}^{\pi},u_{l}^{\pi})\}}\tilde{N}(dl,dz,dr)\\
&\quad+\int_t^s J_x(l,X_l^{\pi})^{\top}\sigma(l,X_l^{\pi},u_l^{\pi}) dW_l\\
=&\int_t^s \left[ q (l,X_l^{\pi},u_l^\pi)-\hat q (l,X_l^{\pi},u_l^\pi)\right]dl +\int_t^s J_x(l,X_l^{\pi})^{\top}\sigma(l,X_l^{\pi},u_l^{\pi}) dW_l \notag\\
&\quad+ \int_t^s\int_{\R^n\times\R_+}\left(J(l,X_{l-}^\pi+\varphi(l,X_{l-}^\pi,u_{l}^\pi,z))-J(l,X_{l-}^\pi)\right) {\bf 1}_{\{r\leq \lambda(l,X_{l-}^{\pi},u_{l}^{\pi})\}}\tilde{N}(dl,dz,dr).
\end{align*}
 Hence, if $\hat q=q$,   \eqref{eq:martinglae-J}  is an $(\Fx,\mathbb{Q})$-martingale.
Conversely, if the right-hand side is a martingale, then, since its second term is a local martingale, it follows that \( \int_t^s \big[ q(l, X_l^\pi, u_l^\pi; \pi) - \hat{q}(l, X_l^\pi, u_l^\pi) \big] dl \) is a continuous local martingale with finite variation, and thus has zero quadratic variation.   The properties of local martingales (see, e.g., \cite{karatzas2012brownian}, Chapter 1, Exercise 5.21) indicate that a continuous local martingale with finite variation has zero quadratic variation. Thus, $\mathbb{Q}$-a.s. 
\begin{align*}
\int_t^s \left( q(l, X_l^\pi, u_l^\pi; \pi) - \hat{q}(l, X_l^\pi, u_l^\pi) \right) dl = 0 \quad \forall \, s \in [t, T].
\end{align*}
 
 Define \( h(t, x, u) = q(t, x, u; \pi) - \hat{q}(t, x, u) \), a continuous function on \( [0, T] \times \R^n \times U \). Suppose the conclusion fails: there exist \( (t^*, x^*, u^*)\in [0,T)\times \R^n \times U \) and \( \epsilon > 0 \) with \( h(t^*, x^*, u^*) > \epsilon \). By continuity of \( h \), there exists \( \delta > 0 \) such that \( h(l, x', u') > \epsilon/2 \) whenever \( |l - t^*| \vee |x' - x^*| \vee |u' - u^*| < \delta \) (where \( u \vee v = \max\{u, v\} \)).

Consider the discretely sampled state process, \( X^\pi \) starting from \( (t^*, x^*, u^*) \) with time grid $G_{t:T}$ satisfying $t^*=t_0<t^*+\delta<t_1<\cdots$. Define  the stopping time:
\[
\tau = \inf\left\{ l \geq t^* ;~ |l - t^*| > \delta \text{ or } |X_l^\pi - x^*| > \delta \right\} = \inf\left\{ l \geq t^*;~ |X_l^\pi - x^*| > \delta \right\} \wedge (t^* + \delta),
\]
with $u \wedge v = \min\{u, v\}$. By the stochastic continuity of \( X^\pi \), \( \tau > t^* \), \( \mathbb{Q} \)-a.s.

From the earlier result, there exists \( \Omega_0 \in \mathscr{F} \) with \( \mathbb{Q}(\Omega_0) = 0 \) such that for all \( \omega \in \Omega \setminus \Omega_0 \), \( \int_{t^*}^s   h(l, X_l^\pi(\omega), u_l^\pi(\omega)) dl = 0 \) for all \( s \in [t^*, T] \). By Lebesgue's differentiation theorem, for \( \omega \in \Omega \setminus \Omega_0 \):
\[
h\big(s, X_s^\pi(\omega), u_s^\pi(\omega)\big) = 0 \quad \text{a.s. } s \in [t^*, \tau(\omega)].\label{eq:hcontradiction}
\]

On the other hand, for the grid chosen above, for any \( s \in [t^{*},\tau(\omega)] \subset [t^{*},t^{*}+\delta] \), \( u_{s}^{ \pi}(\omega) = u_{t^{*}}^{ \pi}(\omega) = \phi(t^{*},x^{*},\xi_{0}(\omega)) \). Recall the definition of the admissible policy in Definition~\ref{def:admissible-pi}, we have
\[
\mathbb{P}\big(\phi(t^{*},x^{*},\xi_{0}(\omega)) \in B_{\delta}(u^{*})\big) = \int_{B_{\delta}(u^{*})} \pi(u|t^{*},x^{*}) du > 0,
\]
where \( B_{\delta}(u^{*}) = \{u' \in \mathcal{A} : |u' - u^{*}| < \delta\} \) is a neighborhood of \( u^{*} \). Hence, there exists \( \omega \in \Omega \setminus \Omega_{0} \) such that for every \( s \in [t^{*},\tau(\omega)] \),
\[
h\big(s,X_{s}^{\pi}(\omega),u_{s}^{ \pi}(\omega)\big) = h\big(s,X_{s}^{ \pi}(\omega),\phi(t^{*},x^{*},\xi_{0}(\omega))\big) > \frac{\epsilon}{2} > 0,
\]
contradicting \eqref{eq:hcontradiction}. This proves that \( q(t,x,u;\pi) = \hat{q}(t,x,u) \) for every \( (t,x,u)\in  [0,T]\times \R^n \times U\).

The proofs of (ii) and (iii) are similar to the first part of the proof of (i) and are hence omitted. Moreover, Eq.~\eqref{eq:qgammar} follows  from the definition of the q-function in Definition~\ref{eq:q-function} and 
the PDE \eqref{eq:PDE-J}.
\end{proof}
We can strengthen Proposition \ref{prop:martingale-condition} and characterize the q-function and the value function associated with a given policy ${\pi}$  simultaneously. 

\begin{theorem}\label{thm:martingale-condition}
Let Assumptions {\bf(A$_{1}$)}, {\bf(A$_{2}$)} and {\bf(A$_{3}$)} hold. For each $p\geq1$, let a policy $\pi_p\in\Pi_0$, a function $\hat{J}\in C^{1,2}([0,T)\times \R^n)\cap C([0,T]\times\R^n)$ and a continuous function $\hat{q}:[0,T]\times\R^n\times U \to \R$ be given such that, for all $(t,x)\in[0,T]\times\R^n$,
\begin{align}\label{eq:consistency}
\hat J(T,x)=g(x), \quad \int_{U}\{\hat{q}(t,x,u)+\gamma l_p(\pi_p(u|t,x))\} \pi_p (u|t,x)du=0.
\end{align}
Then, it holds that
\begin{itemize}
    \item [{\rm(i)}] $\hat{J}$ and $\hat{q}$ are respectively the value function satisfying~\eqref{eq:PDE-J} and the $q$-function associated with $\pi_p$ if and only if for all $(t,x)\in [0,T]\times \R^n$ and any time grid $ G_{t:T}$ on $[t,T]$,  the following process
\begin{align}\label{eq:hatJM}
\hat{J}\left(s,X_s^{\pi_p}; {\pi_p}\right)+\int_t^s (f(l,X_{l}^{\pi_p},u^{\pi_p}_{l})-\hat{q}(l,X_{l}^{\pi_p},u^{\pi_p}_{l}))dl,\quad s\in[t,T]
\end{align}
is an $(\Fx, \mathbb{Q})$-martingale. Here, $X^{\pi_p}=(X_s^{\pi_p})_{s\in[t,T]} $ is the solution to \eqref{eq:X-pi} under $\pi_p$ with $X_t^{\pi_p}=x$. 

\item [{\rm(ii)}] If $\hat{J}$ and $\hat{q}$ are respectively the value function and the $q$-function associated with $\pi_p$, given any $\pi' \in \Pi_0$, for all $(t,x) \in [0,T] \times \mathbb{R}^n $ and any time grid ${G}_{t:T}$ on $[t,T]$, the following process
\begin{align}\label{eq:hatJpip}
   \hat{J}\left(s, X_{s}^{ \pi'}\right) + \int_{t}^{s}   \left[ f\left(l, X_{l}^{ \pi'}, u_{l}^{ \pi'} \right) - \hat{q}\left(l, X_{l}^{ \pi'}, u_{l}^{ \pi'} \right) \right] dl,\quad s\in[t,T]  
\end{align}
is an $(\Fx, \mathbb{Q})$-martingale. Here, $X^{\pi'}=(X_s^{\pi'})_{s\in[t,T]} $ is the solution to \eqref{eq:X-pi} under $\pi'$ with $X_{t}^{ \pi'} = x$.

\item [{\rm(iii)}]   If there exists $\pi' \in \Pi_0$ such that, for all $(t,x) \in [0,T] \times \mathbb{R}^n$ and any time grid ${G}_{t:T}$ on $[t,T]$, the process \eqref{eq:hatJpip} is an $(\Fx, \mathbb{Q})$-martingale, where $(X_s^{ \pi'})_{s\in[t,T]}$ is the solution to \eqref{eq:X-pi} under $\pi'$ with $X_t^{ \pi'} = x$, then $\hat{J}$ and $\hat{q}$ are respectively the value function and the $q$-function associated with $\pi_p$.
\end{itemize}
Furthermore, in any of the three cases above, if it holds  that
\begin{align}
\pi_p(u|t,x)=\left(\frac{p-1}{p\gamma}\right)^{\frac{1}{p-1}}\left(\hat{q}(t,x,u)+\psi(t,x)\right)_+^{\frac{1}{p-1}},\quad p\geq1
\end{align}
with the normalizing function $\psi(t,x)$ satisfying $\int_U\left(\frac{p-1}{p\gamma}\right)^{\frac{1}{p-1}}\left(\hat{q}(t,x,u)+\psi(t,x)\right)_+^{\frac{1}{p-1}}du=1$ for all $(t,x)\in[0,T]\times \R^n$, then $\pi_p$ ($p\geq1$) is an optimal policy and $\hat{J}$ is the corresponding optimal value function.
\end{theorem}
\begin{proof}
  (i)
  Applying It\^o's formula, it holds that, for $0\leq t<s\leq T$,
\begin{align*}
\qquad& 
\hat J(s,X_s^{\pi_p};{\pi_p})-\hat J(t,x)+\int_t^s (f(l,X_l^{\pi_p},u_l^{\pi_p})-\hat q (l,X_l^{\pi_p},u_l^{\pi_p}))dl\nonumber\\
=&\int_t^s \Big[\hat J_t(l,X_l^{\pi_p}) dl
+{\cal H}(l,X_l^{\pi_p},u_l^{\pi_p},\hat J(l,X_l^{\pi_p}),\hat J_x(l,X_l^{\pi_p}),\hat J_{xx}(l,X_l^{\pi_p}))-\hat q (l,X_l^{\pi_p},{\pi_p})\Big]d l \notag\\
&\quad+ \int_t^s\int_{\R}\left(\hat J(l,X_{l-}^{\pi_p}+\varphi(l,X_{l-}^{\pi_p},u_{l}^{\pi_p},z))-\hat J(l,X_{l-}^{\pi_p})\right) {\bf 1}_{\{r\leq \lambda(l,X_{l-}^{\pi_p},u_{l}^{\pi_p})\}}\tilde{N}(dl,dz,dr)\notag\\
&\quad+\int_t^s \hat J_x(l,X_l^{\pi_p})^{\top}\sigma(l,X_l^{\pi_p},u_l^{\pi_p}) dW_l\notag\\
&=-\int_t^s \left[\hat q (l,X_l^{\pi_p};{\pi_p})+\gamma l_p({\pi_p})\right]dl+\int_t^s \hat J_x(l,X_l^{\pi_p})^{\top}\sigma(l,X_l^{\pi_p},u_l^{\pi_p}) dW_l \\
&\quad+\int_t^s\int_{\R}\left(\hat J(l,X_{l-}^{\pi_p}+\varphi(l,X_{l-}^{\pi_p},u_{l}^{\pi_p},z))-\hat J(l,X_{l-}^{\pi_p})\right){\bf 1}_{\{r\leq \lambda(l,X_{l-}^{\pi_p},u_{l}^{\pi_p})\}}\tilde{N}(dl,dz,dr).
\end{align*}
Hence, when  $\hat q$ satisfies~\eqref{eq:consistency},   \eqref{eq:hatJM}  is an $(\Fx,\mathbb{Q})$-martingale. 

We now prove the other side. Assume that \eqref{eq:hatJM}  is  an $(\Fx,\mathbb{Q})$-martingale. 
Using Proposition~\ref{prop:martingale-condition}-(i), we derive that 
$\hat q(t,x,u)=\hat J_t(t,x)+{\cal H}(t,x,u,\hat J,\hat J_x,\hat J_{xx})$, for $(t,x,u) \in[0,T]\times \R^n\times U$. Then, the constraint~\eqref{eq:consistency} reads 
\begin{align*}
    \hat J(T,x)=g(x), \quad \int_{U}\{\hat J_t(t,x)+{\cal H}(t,x,u,\hat J,\hat J_x,\hat J_{xx})+\gamma l_p(\pi_p(u|t,x))\} \pi_p (u|t,x)du=0.
\end{align*}
Therefore, $\hat J$ is the value function satisfying \eqref{eq:PDE-J}.
The claim (ii) follows immediately from Proposition~\ref{prop:martingale-condition}-(ii) and the proof of the claim (i); thus, we omit them here. The last conclusion follows from the same argument in \cite{jia2023q}.
\end{proof}

\begin{remark}\label{remk:martingale-entropy}
Proposition \ref{prop:martingale-condition} and Theorem \ref{thm:martingale-condition} show that the form of the martingale conditions are similar to those under Shannon entropy, which are robust to the specific choice of entropy. 
\end{remark}

Next, we present the martingale characterization of the optimal value function $J^*$ and
the optimal q-function. The proof of Theorem \ref{thm:optimal-martingale} closely follows those  of Proposition \ref{prop:martingale-condition} and Theorem \ref{thm:martingale-condition}, and is hence omitted.
\begin{theorem}\label{thm:optimal-martingale}
Let Assumptions {\bf(A$_{1}$)}, {\bf(A$_{2}$)} and {\bf(A$_{3}$)} hold. For each $p\geq1$, given a policy $\pi_p\in\Pi_0$, a function $\hat{J}\in C^{1,2}([0,T)\times \R^n)\cap C([0,T]\times\R^n)$ and a continuous function $\hat{q}:[0,T]\times\R^n\times U \to \R$ such that, for all $(t,x)\in[0,T]\times\R^n$,
\begin{align}\label{eq:consistency-optimal}
\hat J(T,x)=g(x), \\
\left(\frac{p-1}{p\gamma}\right)^{\frac{p}{p-1}} \int_{U}\left(\left(\hat{q}(t,x,u)+\psi(t,x)\right)_+-p\hat{q}(t,x,u)\right)\left(\hat{q}(t,x,u)+\psi(t,x)\right)_+^{\frac{1}{p-1}}du=1,
\end{align}
where $\psi(t,x)$ is  the normalizing function satisfying $\int_U\left(\frac{p-1}{p\gamma}\right)^{\frac{1}{p-1}}\left(\hat{q}(t,x,u)+\psi(t,x)\right)_+^{\frac{1}{p-1}}du=1$ for all $(t,x)\in[0,T]\times \R^n$.
Then, it holds that
\begin{itemize}
\item [{\rm(i)}] If $\hat{J}^*$ and $\hat{q}^*$ are respectively the optimal value function and the optimal $q$-function, given any $\pi' \in \Pi_0$, for all $(t,x) \in [0,T] \times \mathbb{R}^n $ and any time grid ${G}_{t:T}$ on $[t,T]$, the following process
\begin{align}\label{eq:hatJpip-optimal}
   \hat{J}^*\left(s, X_{s}^{ \pi'}\right) + \int_{t}^{s}   \left[ f\left(l, X_{l}^{ \pi'}, u_{l}^{ \pi'} \right) - \hat{q}^*\left(l, X_{l}^{ \pi'}, u_{l}^{ \pi'} \right) \right] dl,\quad s\in[t,T]  
\end{align}
is an $(\Fx, \mathbb{Q})$-martingale. Here, $X^{\pi'}=(X_s^{\pi'})_{s\in[t,T]} $ is the solution to \eqref{eq:X-pi} under $\pi'$ with $X_{t}^{ \pi'} = x$. Furthermore,  the policy $\pi^*_p$  given by
\begin{align}\label{eq:optimal-martingale-policy}
\pi_p^*(u|t,x)=\left(\frac{p-1}{p\gamma}\right)^{\frac{1}{p-1}}\left(\hat{q}(t,x,u)+\psi(t,x)\right)_+^{\frac{1}{p-1}},\quad p\geq1
\end{align} 
is the optimal policy.

\item [{\rm(ii)}]   If there exists some $\pi' \in \Pi_0$ such that, for all $(t,x) \in [0,T] \times \mathbb{R}^n$ and any time grid ${G}_{t:T}$ on $[t,T]$, the process \eqref{eq:hatJpip-optimal} is an $(\Fx, \mathbb{Q})$-martingale, where $(X_s^{ \pi'})_{s\in[t,T]}$ is the solution to \eqref{eq:X-pi} under $\pi'$ with $X_t^{ \pi'} = x$, then $\hat{J}^*$ and $\hat{q}^*$ are respectively the optimal value function and the optimal $q$-function.
\end{itemize}
\end{theorem}

\section{q-Learning Algorithms under Tsallis Entropy}\label{sec:algorithm}

\subsection{q-Learning algorithm when the normalizing function is available}\label{sec:normalizing -available}
 In this subsection, we design q-learning algorithms to simultaneously learn and update the parameterized value function and the policy based on the martingale condition in Theorem \ref{thm:optimal-martingale}.

We first consider the case when the normalizing function $\psi(t,x)$ is available or computable.
Given a policy $ \pi\in \Pi_0$, we parameterize the value function by a family of functions $J^\theta(\cdot)$, where $\theta \in \Theta \subset \R^{L_\theta}$ and $L_{\theta}$ is the dimension of the parameter, and parameterize the q-function by a family of functions $q^\zeta(\cdot,\cdot)$, where $\zeta \in \Psi \subset \R^{L_\zeta}$ and $L_{\zeta}$ is the dimension of the parameter. Then, 
we can get the normalizing function $\psi^{\zeta}(t,x)$ by the constraint 
\begin{align}\label{eq:normalizing}
\int_U \left(\frac{p-1}{p\gamma}\right)^{\frac{1}{p-1}}\left(q^{\zeta}(t,x,u)+\psi^{\zeta}(t,x)\right)_+^{\frac{1}{p-1}}du=1.
\end{align}
Moreover, the approximators $J^{\theta}$ and $q^{\zeta}$ should also satisfy
\begin{align}
J^{\theta}(T,x)=g(x),\quad \int_{U}[q^{\zeta}(t,x,u)+\gamma l_p(\pi^{\zeta}(u|t,x))]  \pi^{\zeta} (u|t,x)du=0,
\end{align}
where the policy $\pi^{\zeta}$ is given by, for all $(t,x,u)\in[0,T]\times \R\times U$,
\begin{align*}
\pi^{\zeta}(u|t,x)=\left(\frac{p-1}{p\gamma}\right)^{\frac{1}{p-1}}\left(q^{\zeta}(t,x,u)+\psi^{\zeta}(t,x)\right)_+^{\frac{1}{p-1}}.
\end{align*}
Then, the learning task is to find the ``optimal'' (in some sense) parameters $\theta$ and $\zeta$.  The key step in the algorithm design is to enforce the martingale condition stipulated in Theorem \ref{thm:martingale-condition}. By using martingale orthogonality condition, it is enough to explore the solution $(\theta^*,\zeta^*)$ of the following martingale orthogonality equation system:
\begin{align*}
\Ex^{\Qx}\left[\int_0^T \varrho_t \left(dJ^{\theta}\left(t,X_t^{ \pi^{\zeta}}\right)+ f(t,X_{t}^{\pi^{\zeta}},u^{\pi^{\zeta}}_{t})dt-q^{\zeta}(t,X_{t}^{\pi^{\zeta}},u^{\pi^{\zeta}}_{t})dt\right)\right]=0,
\end{align*}
and 
\begin{align*}
\Ex^{\Qx}\left[\int_0^T \varsigma_t \left(dJ^{\theta}\left(t,X_t^{ \pi^{\zeta}}\right)+ f(t,X_{t}^{\pi^{\zeta}},u^{\pi^{\zeta}}_{t})dt-q^{\zeta}(t,X_{t}^{\pi^{\zeta}},u^{\pi^{\zeta}}_{t})dt\right)\right]=0,
\end{align*}
where $X_t^{ \pi^{\zeta}}$ and $u^{\pi^{\zeta}}_t$ are defined in the sampling SDE \eqref{eq:X-pi} and
the test functions $\varrho=(\varrho_t)_{t\in[0,T]},\varsigma=(\varsigma_t)_{t\in[0,T]}$ are $\Fx$-adapted stochastic processes. This can be implemented offline by using stochastic approximation to update parameters as
\begin{align}\label{eq:J-q}
\begin{cases}
\displaystyle \theta\leftarrow \theta+\alpha_{\theta}\int_0^T \varrho_t \left(dJ^{\theta}\left(t,X_t^{ \pi^{\zeta}}\right)+ f(t,X_{t}^{\pi^{\zeta}},u^{\pi^{\zeta}}_{t})dt-q^{\zeta}(t,X_{t}^{\pi^{\zeta}},u^{\pi^{\zeta}}_{t})dt\right),\\[1em]
\displaystyle \zeta \leftarrow \zeta+\alpha_{\zeta}\int_0^T \varsigma_t \left(dJ^{\theta}\left(t,X_t^{ \pi^{\zeta}}\right)+ f(t,X_{t}^{\pi^{\zeta}},u^{\pi^{\zeta}}_{t})dt-q^{\zeta}(t,X_{t}^{\pi^{\zeta}},u^{\pi^{\zeta}}_{t})dt\right),
\end{cases}
\end{align}
where $\alpha_{\theta}$ and $\alpha_{\zeta}$ are learning rates. In this paper, we choose the test functions in the conventional sense that
\begin{align*}
\varrho_t=\frac{\partial J^\theta}{\partial \xi}\left(t,X_t^{\pi^{ \zeta}}\right),\quad \varsigma_t= \frac{\partial q^\zeta}{\partial \zeta}\left(t, X_t^{\pi^\zeta}, u_t^{\pi^\zeta}\right). 
\end{align*}

Based on the above updating rules, we present the pseudo-code of the offline q-learning algorithm in Algorithm \ref{Alg:Tsallis-q-Learning}. 
	\begin{algorithm}[h]
			\caption{\textbf{Offline q-Learning Algorithm when Normalizing Function Is available}}
		    \label{Alg:Tsallis-q-Learning}
			\hspace*{0.02in} {\bf Input:} 
			 Initial state pair $x_0$, horizon $T$, time step $\Delta t$, number of episodes $N$, number of mesh grids $K$, initial learning rates $\alpha_\theta(\cdot), \alpha_\zeta(\cdot)$  (a function of the number of episodes), functional forms of parameterized value function $J^\theta(\cdot)$, q-function $q^\zeta(\cdot)$, policy $\pi^\zeta(\cdot \mid \cdot)$ and temperature parameter $\gamma$.\\
			\hspace*{0.02in} {\bf Required Program:} an environment simulator $(x^{\prime}, f^{\prime})=$ Environment $_{\Delta t}(t,x,u)$ that takes current time-state pair $(t,x)$ and action $u$ as inputs and generates state $x^{\prime}$ and reward $f^{\prime}$ at time $t+\Delta t$ as outputs. \\
			\hspace*{0.02in} {\bf Learning Procedure:}
			\begin{algorithmic}[1]
\State Initialize $\theta$,~$\zeta$ and $i=1$. 
				\While{$i<N$}  
				\State Initialize $j = 0$.  Observe initial state $x_0$ and store $x_{t_0}\leftarrow x_0$.
				\While{$j < K$}
				  
				    \State \ \ \ \ \    Generate action $u_{t_j} \sim \pi^\zeta\left(\cdot \mid t_j,x_{t_j}\right)$.
                   \State \ \ \ \ \   Apply $u_{t_j} $ to environment simulator $(x, f)=$ Environment  $_{\Delta t}(t_j, x_{t_j},u_{t_j})$.
                   \State \ \ \ \ \   Observe  new state $x$ and $f$ as output. Store $x_{t_{j+1}} \leftarrow x$,  and $ f_{t_{j+1}} \leftarrow  f$. 
                \EndWhile{} 
                \State  For every $k=0,1,...,K-1$, compute 
\begin{align*}
 G_{k}&=J^\theta\left(t_{k+1},x_{t_{k+1}}\right)-J^\theta\left(t_{k},x_{t_k}\right)+f_{t_k}\Delta t-q^{\zeta}\left(t_k,x_{t_k},u_{t_k}\right) \Delta t.
\end{align*}

				\State Update $\theta$ and $\zeta$ by
\begin{align*}
\theta &\leftarrow \theta+\alpha_\theta(i) \sum_{k=0}^{K-1} \frac{\partial J^\theta}{\partial \theta}\left(t_k,x_{t_k}\right)G_{k}, \\
\zeta &\leftarrow \zeta+\alpha_\psi(i) \sum_{k=0}^{K-1}   \frac{\partial q^\zeta}{\partial \zeta}\left(t_k,x_{t_k},u_{t_k}\right)G_{k}.
\end{align*}
               \State   Update $i \leftarrow i+1$.
				          
              \EndWhile{}                
    \end{algorithmic}
\end{algorithm}

\subsection{q-Learning algorithm when the normalizing function is unavailable}
In this subsection, we handle the case when the normalizing function $\psi(t,x)$ does not admit an explicit form. In this case, by knowing the learned q-function, we cannot learn the optimal policy directly as there is an unknown term $\psi(t,x)$. We can still parameterize the value function by a family of functions $J^\theta(\cdot)$, where $\theta\in \Theta \subset \R^{L_\theta}$ and $L_{\xi}$ is the dimension of the parameter, and parameterize the q-function by a family of functions $q^\zeta(\cdot,\cdot)$, where $\zeta \in \Psi \subset \R^{L_\zeta}$ and $L_{\zeta}$ is the dimension of the parameter. However, we can not get the normalizing function $\psi(t,x)$ from \eqref{eq:normalizing}. In response, we parameterize the policy  by a family policy function $\pi^{\chi}(\cdot)$, where $\chi \in \Upsilon \subset \R^{L_\chi}$ and $L_{\chi}$ is the dimension of the parameter. Moreover, the approximators $J^{\theta}$ and $\pi^{\chi}$ should also satisfy $J^{\theta}(T,x)=g(x)$. Define the function $F:[0,T]\times \R^n\times {\cal P}(U)\times {\cal P}(U)\to \R$ by
\begin{align}
F(t,x;\pi',\pi):=\int_{U}\left(q(t,x,u;\pi)+\gamma l_p(\pi'(u|t,x))\right)  \pi' (u|t,x)du.
\end{align}
Then, we can devise an Actor-Critic q-learning algorithm to learn the q-function and the optimal policy alternatively. For the Actor-step (or policy improvement step), we update the policy $\pi^{\chi}$ by maximizing the function $F(t,x;\pi^{\chi'},\pi^{\chi})$ that
\begin{align*}
\max_{\chi'\in\Upsilon} F(t,x;\pi^{\chi'},\pi^{\chi})=\max_{\chi'\in \Upsilon}\int_{U}\left(q(t,x,u;\pi^{\chi})+\gamma l_p(\pi^{\chi'}(u|t,x))\right) \pi^{\chi'} (u|t,x)du
\end{align*}
In fact, we have the next result, which is a direct consequence of Theorem \ref{thm:PIT}.
\begin{lemma}
Suppose {\bf(A$_{1}$)}, {\bf(A$_{2}$)} and {\bf(A$_{3}$)} hold. Given $(t,x)\in[0,T]\times \R^n$ and $\pi,\pi'\in\Pi_t$, if it holds that $F(t,x;\pi',\pi)\geq F(t,x;\pi,\pi)$, then $J(t,x;\pi')\geq J(t,x;\pi)$.
\end{lemma}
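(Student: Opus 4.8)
The plan is to treat this as a direct transcription of the policy-improvement argument behind Theorem \ref{thm:PIT}, with the explicitly improved policy replaced by an arbitrary $\pi'$ satisfying the stated inequality. The first step is to observe that the baseline $F(t,x;\pi,\pi)$ vanishes. Indeed, substituting the definition \eqref{eq:q-function} of $q(t,x,u;\pi)=\frac{\partial J}{\partial t}(t,x;\pi)+{\cal H}(t,x,u,J(\cdot,\cdot;\pi))$ into the PDE \eqref{eq:PDE-J} and using $\int_U\pi(u|t,x)du=1$ gives, after cancellation of the $J_t$ terms,
\[
\int_U\left(q(t,x,u;\pi)+\gamma l_p(\pi(u|t,x))\right)\pi(u|t,x)du=0,
\]
which is precisely $F(t,x;\pi,\pi)=0$ (the consistency condition \eqref{eq:consistency}). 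Hence the hypothesis $F(t,x;\pi',\pi)\ge F(t,x;\pi,\pi)$ is equivalent to $F(s,y;\pi',\pi)\ge 0$, and by continuity of the integrand in $(s,y)$ this holds for all $(s,y)\in[t,T]\times\R$.

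Next I would apply the It\^o--Dynkin formula for jump-diffusions to the process $s\mapsto J(s,X_s^{\pi'};\pi)$, that is, the objective function of the old policy $\pi$ evaluated along the state trajectory $X^{\pi'}$ driven by the candidate policy $\pi'$ via the SDE \eqref{eq:martingale-measure}. Since $J(\cdot,\cdot;\pi)\in C^{1,2}$ by assumption, the generator of $X^{\pi'}$ acting on $J(\cdot,\cdot;\pi)$ equals $J_t+\int_U\big({\cal H}(s,X_s^{\pi'},u,J(\cdot,\cdot;\pi))-f(s,X_s^{\pi'},u)\big)\pi'(u|s,X_s^{\pi'})du$; invoking \eqref{eq:q-function} and $\int_U\pi'\,du=1$ to eliminate the $J_t$ term, the drift of $J(s,X_s^{\pi'};\pi)$ reduces to
\[
\int_U\left(q(s,X_s^{\pi'},u;\pi)-f(s,X_s^{\pi'},u)\right)\pi'(u|s,X_s^{\pi'})du.
\]
Using $F(s,X_s^{\pi'};\pi',\pi)\ge 0$, i.e. $\int_U q\,\pi'\,du\ge -\gamma\int_U l_p(\pi')\pi'\,du$, the drift is bounded below by $-\int_U\big(f+\gamma l_p(\pi')\big)\pi'\,du$.

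Finally, I would integrate over $[t,T]$, take expectations so that the diffusive and compensated-Poisson stochastic integrals vanish, and use the terminal condition $J(T,\cdot;\pi)=g(\cdot)$. This yields
\[
\Ex\big[g(X_T^{\pi'})\big]-J(t,x;\pi)\ge -\,\Ex\left[\int_t^T\int_U\big(f(s,X_s^{\pi'},u)+\gamma l_p(\pi'(u|s,X_s^{\pi'}))\big)\pi'\,du\,ds\right],
\]
and the right-hand side rearranges to exactly $J(t,x;\pi')-J(t,x;\pi)$ by the definition \eqref{eq:RL-problem} of the objective, giving $J(t,x;\pi')\ge J(t,x;\pi)$.

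The main obstacle is the integrability bookkeeping needed to guarantee that the stochastic integrals are genuine martingales (not merely local martingales), so that they drop out under expectation, and that $J(s,X_s^{\pi'};\pi)$ is integrable along the trajectory. This rests on the admissibility requirements in Definition \ref{def:admissible-pi}---in particular the moment and entropy-integrability bounds on $\pi'$---together with the growth conditions (A$_{b,\sigma,\varphi}$) and (A$_{f,g}$), which control the polynomial growth of $J(\cdot,\cdot;\pi)$ and its derivatives along $X^{\pi'}$; this is the only place where genuine care beyond the algebraic identities is required, and everything else mirrors the proof of Theorem \ref{thm:PIT}.
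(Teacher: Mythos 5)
Your argument is essentially the proof the paper has in mind: the paper offers no standalone derivation, asserting only that the lemma is ``a direct consequence of Theorem \ref{thm:PIT}'', and the omitted proof of that theorem (Theorem 2 of Jia and Zhou, as cited) is precisely the It\^o--Dynkin computation you carry out --- apply the generator of $X^{\pi'}$ to $J(\cdot,\cdot;\pi)$, identify the drift as $\int_U(q(s,\cdot,u;\pi)-f)\,\pi'\,du$ via \eqref{eq:q-function}, use the nonnegativity of $F$ to bound it, integrate, and match the boundary term against \eqref{eq:RL-problem}. Your version is, if anything, more honest than the paper's citation, since Theorem \ref{thm:PIT} as stated only concerns the particular improved policy $\mathcal{I}_p(\pi)$, so the lemma really follows from the theorem's \emph{proof} rather than its statement; your identity $F(t,x;\pi,\pi)=0$, obtained by combining \eqref{eq:q-function} with the PDE \eqref{eq:PDE-J}, is exactly the consistency condition \eqref{eq:consistency} and is the right starting point.

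The one step that does not hold up is the claim that, by continuity, $F(t,x;\pi',\pi)\ge 0$ at the given $(t,x)$ implies $F(s,y;\pi',\pi)\ge 0$ for all $(s,y)\in[t,T]\times\R$: continuity cannot propagate a pointwise inequality to the whole domain (at best it gives nonnegativity on a neighborhood). This matters, because your drift bound must hold along the entire trajectory $(s,X_s^{\pi'})$, $s\in[t,T]$, not merely at the initial point; under a literal single-point hypothesis the conclusion is in fact false in general, since $\pi'$ could improve on $\pi$ at the initial instant (a set of time-measure zero) while being ruinous afterwards. The correct repair is interpretive rather than analytic: read the lemma's hypothesis, as the paper plainly intends and as Theorem \ref{thm:PIT} is phrased, as holding for every $(t,x)\in[0,T]\times\R$ --- both $\pi$ and $\pi'$ are feedback policies, so $F(\cdot,\cdot\,;\pi',\pi)$ is a function on all of $[0,T]\times\R$. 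With that reading, your drift computation, the rearrangement yielding $J(t,x;\pi')\ge J(t,x;\pi)$, and the localization/integrability bookkeeping via Definition \ref{def:admissible-pi} and the growth conditions {\bf(A$_{b,\sigma,\varphi}$)}, {\bf(A$_{f,g}$)} are all correct, and the proof is complete.
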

Moreover, in order to employ the q-learning method based on Theorem \ref{thm:martingale-condition},  the policy function $\pi^{\chi}$ should satisfy $\pi^{\chi}\in {\cal P}(U)$ and the consistency condition \eqref{eq:consistency}. Here, we relax these constraints and consider the following maximization problem, for $w_1,w_2\geq 0$
\begin{align*}
\max_{\chi'\in\Upsilon} \left[F(t,x;\pi^{\chi'},\pi^{\chi})-w_1 F^2(t,x;\pi^{\chi'},\pi^{\chi'})-w_2\left(\int_U \pi^{\chi'}(u)du-1\right)^2\right].
\end{align*} 
By a direct calculation, we obtain
\begin{align*}
\frac{\partial  F(t,x;\pi^{\chi'},\pi^{\chi})}{\partial \chi'}&=\int_{U}\left(q(t,x,u;\pi^{\chi})+\gamma l_p(\pi^{\chi'}(u|t,x))\right) \frac{\partial  \pi^{\chi'} (u|t,x)}{\partial \chi'}du\nonumber\\
&\quad+\int_{U}\gamma l_p'(\pi^{\chi'}(u|t,x)) \frac{\partial  \pi^{\chi'} (u|t,x)}{\partial \chi'}\pi^{\chi'} (u|t,x)du\\
&=\int_{U}\left(q(t,x,u;\pi^{\chi})+\gamma l_p(\pi^{\chi'}(u|t,x))\right) \frac{\partial  \ln \pi^{\chi'} (u|t,x)}{\partial \chi'}\pi^{\chi'} (u|t,x)du\\
&\quad +\gamma \int_{U}l_p'(\pi^{\chi'}(u|t,x)) \frac{\partial  \pi^{\chi'} (u|t,x)}{\partial \chi}\pi^{\chi'} (u|t,x)du.
\end{align*}
Hence, we can update $\chi$ by using the stochastic gradient descent that

\begin{align*}
\chi\leftarrow\chi&+\alpha_{\chi}\Bigg(\int_0^T\Bigg(\left(q(t,X_t,u^{\pi^{\chi}};\pi^{\chi})+\gamma l(\pi^{\chi}(u^{\pi^{\chi}}|t,X_t))\right) \frac{\partial  \ln \pi^{\chi} (u^{\pi^{\chi}}|t,X_t)}{\partial \chi}\nonumber\\
&+\gamma l'(\pi^{\chi}(u^{\pi^{\chi}}|t,X_t)) \frac{\partial  \pi^{\chi} (u^{\pi^{\chi}}|t,X_t)}{\partial \chi}\Bigg)dt-2w_1 \int_0^T F(t,X_t;\pi^{\chi},\pi^{\chi})\frac{\partial  F(t,X_t;\pi^{\chi},\pi^{\chi})}{\partial \chi}dt\nonumber\\
&-2w_2\int_0^T \left(\int_U \pi^{\chi}(u|t,X_t)du-1\right)\int_U \frac{\partial \pi^{\chi}}{\partial \chi}(u|t,X_t)dudt\Bigg).
\end{align*}

Next, for the Critic-step (or the policy evaluation step), we can follow the same updating rules of parameters for the value function and q-function according to \eqref{eq:J-q} in the previous algorithm in subsection \ref{sec:normalizing -available}. We present the pseudo-code of the Actor-Critic q-learning algorithm when normalizing function is unavailable in Algorithm \ref{Alg:Tsallis-q-Learning-normalizing-unavailable}.

\begin{algorithm}[htbp]			\caption{\textbf{Offline q-Learning Algorithm When Normalizing Function Is Unavailable }}
		    \label{Alg:Tsallis-q-Learning-normalizing-unavailable}
			\hspace*{0.02in} {\bf Input:} 
			 Initial state pair $x_0$, horizon $T$, time step $\Delta t$, number of episodes $N$, number of mesh grids $K$, initial learning rates $\alpha_\theta(\cdot), \alpha_\zeta(\cdot)$  (a function of the number of episodes), functional forms of parameterized value function $J^\theta(\cdot)$, q-function $q^\zeta(\cdot)$, policy $\pi^\chi(\cdot \mid \cdot)$ and temperature parameter $\gamma$.\\
			\hspace*{0.02in} {\bf Required Program:} an environment simulator $(x^{\prime}, f^{\prime})=$ Environment $_{\Delta t}(t,x,u)$ that takes current time-state pair $(t,x)$ and action $u$ as inputs and generates state $x^{\prime}$ and reward $f^{\prime}$ at time $t+\Delta t$ as outputs. \\
			\hspace*{0.02in} {\bf Learning Procedure:}
			\begin{algorithmic}[1]
\State Initialize $\theta$,~$\zeta$ and $i=1$. 
				\While{$i<N$}  
				\State Initialize $j = 0$.  Observe initial state $x_0$ and store $x_{t_0}\leftarrow x_0$.
				\While{$j < K$}
				  
				    \State \ \ \ \ \    Generate action $u_{t_j} \sim \pi^\chi\left(\cdot \mid t_j,x_{t_j}\right)$.
                   \State \ \ \ \ \   Apply $u_{t_j} $ to environment simulator $(x, f)=$ Environment  $_{\Delta t}(t_j, x_{t_j},u_{t_j})$.
                   \State \ \ \ \ \   Observe  new state $x$ and $f$ as output. Store $x_{t_{j+1}} \leftarrow x$,  and $ f_{t_{j+1}} \leftarrow  f$. 
                \EndWhile{} 
                \State  For every $k=0,1,...,K-1$, compute 
\begin{align*}
 G_{k}&=J^\theta\left(t_{k+1},x_{t_{k+1}}\right)-J^\theta\left(t_{k},x_{t_k}\right)+f_{t_k}\Delta t-q^{\zeta}\left(t_k,x_{t_k},u_{t_k}\right) \Delta t.
\end{align*}

				\State For the Critic (policy evaluation) step, update $\theta$ and $\zeta$ (using the updated $\chi$) by
\begin{align*}
\theta &\leftarrow \theta+\alpha_\theta(i) \sum_{k=0}^{K-1} \frac{\partial J^\theta}{\partial \theta}\left(t_k,x_{t_k}\right)G_{k}, \\
\zeta &\leftarrow \zeta+\alpha_\psi(i) \sum_{k=0}^{K-1}   \frac{\partial q^\zeta}{\partial \zeta}\left(t_k,x_{t_k},u_{t_k}\right)G_{k}.\\
\end{align*}

\State For the Actor (policy improvement) step, update $\chi$ (using the updated $\theta$ and $\zeta$) by
\begin{align*}
\chi&\leftarrow\chi+\alpha_{\chi}(i)\Bigg(\sum_{k=0}^{K-1} \left(\left(q^{\zeta}(t_k,x_{t_k},u_{t_k})+\gamma l(\pi^{\chi}(u_{t_k})\right) \frac{\partial  \ln \pi^{\chi} (u_{t_k})}{\partial \chi}+\gamma l'(\pi^{\chi}(u_{t_k})) \frac{\partial  \pi^{\chi} (u_{t_k})}{\partial \chi}\right)\\
&\qquad-2w_1(i) \sum_{k=0}^{K-1}F(t_k,x_{t_k};\pi^{\chi},\pi^{\chi})\frac{\partial  F(t_k,x_{t_k};\pi^{\chi},\pi^{\chi})}{\partial \chi}\\
&\qquad-2w_2(i)\sum_{k=0}^{K-1} \left(\int_U \pi^{\chi}(u|t_k,x_{t_k})du-1\right)\int_U \frac{\partial \pi^{\chi}}{\partial \chi}(u|t_k,x_{t_k})du\Bigg).
\end{align*}
               \State   Update $i \leftarrow i+1$.
				          
              \EndWhile{}                
    \end{algorithmic}
\end{algorithm}

\section{Applications and Numerical Examples}\label{sec:appl}

\subsection{The optimal portfolio liquidation problem}

Consider an optimal portfolio liquidation problem in which a large investor has access both to a classical exchange and to a dark pool with adverse selection. As in \cite{kratz2014optimal,kratz2015portfolio}, the trading and price formation are described as the classical exchange as a linear price impact model. The trade execution can be enforced by selling aggressively;  however, this results in quadratic execution costs due to a stronger price impact. The order execution in the dark pool is modeled by a Poisson process ${N}=(N_t)_{t\geq0}$ with intensity parameter $\lambda>0$, where orders submitted to the dark pool are executed at the jump times of Poisson processes. The split of orders between the dark pool and the exchange is thus driven by the trade-off between execution uncertainty and price impact costs. Next, we formulate the optimal portfolio liquidation problem in detail. Consider an investor who has to liquidate an asset position $x \in \R$ within a finite trading horizon $[0, T]$. The investor can control her trading intensity $\xi=(\xi_t)_{t\in[0,T]}$, and can place orders $\eta=(\eta_t)_{ t\in[0, T]}$ in the dark pool. Given a trading strategy $u=(\xi_t,\eta_t)_{t\in[0,T]}$ (as r.c.l.l. $\Fx$-predictable processes) taking values in the policy space $U=\R^2$, the asset holdings of the investor at time $t\in[0,T)$ is given by  
\begin{align}\label{eq:Xut}
    X_t^u:=x-\int_0^t \xi_s d s-\int_0^t  \eta_s d {N}_s.
\end{align}
Then, a liquidation strategy $u\in{\cal U}$ yields the following trading costs at  $(t,x)\in [0,T]\times\R$,
\begin{align}\label{eq:darkpoolJDP}
{J}_{\rm DP}(t,x;u):=\mathbb{E}^{\Px}\left[\int_t^T\left(\kappa |\xi_s|^2+c |X_s^u|^2\right) ds +g_T(X_T)\Big | X_t=x\right],
\end{align}
where,
 according to the liquidation constraint in the Definition 2.3 of  \cite{{kratz2015portfolio}}, it holds that
\begin{align}\label{eq:darkpool-g}
    g_T(x)=\begin{cases}
    \displaystyle    0, & \text{if $x=0$},\\
     \displaystyle   +\infty, & \text{otherwise}.
    \end{cases}
\end{align}
The first term of the right-hand side of the objective~\eqref{eq:darkpoolJDP} refers to the linear price impact costs generated by trading in the traditional market, while the second term is the quadratic risk cost penalizing slow liquidation. Then, the goal of the investor is to minimize the liquidation cost that
\begin{align}\label{eq:jlpi}
v(t,x)&:=\inf_{u\in\mathcal{U}} J_{\rm DP}(t,x;u)=-\sup _{u\in\mathcal{U}}J(t,x;u)\notag\\
&=-\sup _{u\in\mathcal{U}}\mathbb{E}_t^{\Px}\left[\int_t^T\left(-\kappa |\xi_s|^2-c |X_s^u|^2\right) d s-g_T(X_T)\Big | X_t=x\right].
\end{align}
Using the exploratory formulation in \eqref{eq:RL-problem}, we first consider the entropy-regularized relaxed control problem with \eqref{eq:Xut} and \eqref{eq:jlpi} that
  \begin{align}\label{eq:omega}
\begin{cases}
\displaystyle w(t,x):=\sup_{\pi\in\Pi}\Ex^{\Px}\left[\int_t^T\int_{U} \left(-\kappa|u_1|^2-c|X_s^u|^2+\gamma l_p (\pi_s(u))\right)\pi_s(u)duds-g_T( X_T^u)\Big | X_t=x\right],\\[1.2em]
\displaystyle ~{\rm s.t.}~X_t^{\pi} =x-\int_0^t\int_{\R^2} u_1\pi_s(du)ds - \int_0^t\int_{\R^2}u_2\mathcal{N}(ds,du),\quad \forall t\in[0,T].
\end{cases}
\end{align}

To continue, we first relax the liquidation constraint by introducing a penalty term when the liquidation is not completely exercised. That is, we consider, for $\ell>0$,
\begin{align*}
J^{(\ell)}(t,x;u):= \mathbb{E}^{\Px}\left[\int_t^T\left(-\kappa |\xi_s|^2-c |X_s^u|^2\right) d s-\ell X_T^2\Big | X_t=x\right].
\end{align*}
Consequently, the associated exploratory formulation of the control problem under Tsallis entropy is given by
\begin{align}\label{eq:RL-problem-appl}
\begin{cases}
\displaystyle V^{(\ell)}(t,x):=\sup_{\pi\in\Pi} J^{(\ell)}(t,x;\pi)\\[0.8em]
\displaystyle~~~~~~~~~~~=\sup_{\pi\in\Pi}\Ex^{\Px}\left[\int_t^T\int_{U} \left(-\kappa|u_1|^2-c|X_s^u|^2+\gamma l_p (\pi_s(u))\right)\pi_s(u)duds-\ell X_T^2\Big | X_t=x\right],\\[1.2em]
\displaystyle ~{\rm s.t.}~X_t^{\pi} =x-\int_0^t\int_{\R^2} u_1\pi_s(du)ds - \int_0^t\int_{\R^2}u_2\mathcal{N}(ds,du),\quad \forall t\in[0,T].
\displaystyle 
\end{cases}
\end{align}
Then, we have
\begin{lemma}\label{lem:Vell}
The liquidation cost minimization reinforcement learning problem~\eqref{eq:RL-problem-appl} under the  Tsallis entropy regularizer has the following explicit value function given by, for any $\ell>0$,
    \begin{align*}
        V^{(\ell)} (t,x)=\frac{\alpha^{(\ell)}(t)}{2}x^2+\beta^{(\ell)}(t),\quad\forall (t,x)\in[0,T]\times\R,
    \end{align*}
where the coefficients are given by
\begin{align*}
    \alpha^{(\ell)}(t)&=-\frac{ \left(\ell\kappa (w -\lambda)+4c \kappa\right)e^{w (T-t)}+ \ell\kappa (w +\lambda)-4c \kappa}{ \left(\kappa (w +\lambda)+\ell\right)e^{w (T-t)} + \kappa (w -\lambda)-\ell},\quad {\rm and} \\
    \beta^{(\ell)}(t)&=\begin{cases}
    \displaystyle -\frac{p^2 \gamma ^{\frac{1}{p}}}{(2p-1)(p-1)}\int_t^T \left(\frac{1}{\pi}\sqrt{\frac{-\kappa \lambda \alpha^{(\ell)}(s)}{2}}\right)^{\frac{p-1}{p}}ds+\frac{\gamma}{p-1}(T-t),&p>1,\\[1.2em]
    \displaystyle \gamma \int_t^T \ln \left(\frac{\gamma \pi }{\sqrt{-\kappa \lambda \alpha^{(\ell)}(s)/2}}\right)ds,&p=1
    \end{cases}
\end{align*}
with the constant $w:=\sqrt{\lambda^2+\frac{4 c}{\kappa}}$. Moreover, the optimal policy is given by, for $u=(u_1,u_2)\in\R^2$,
{\footnotesize
\begin{align*}
\widehat{\pi}^{(\ell)}(u|t,x)=\begin{cases}
 \displaystyle   \left(\frac{p-1}{\gamma  p}\right)^{\frac{1}{p-1}}\left(\psi(t,x)- u_1V_x^{(\ell)}(t,x)+\lambda(V^{(\ell)}(t,x-u_2)-V^{(\ell)}(t,x))-\kappa u_1^2-c x^2+\frac{\gamma}{p-1}\right)_+^{\frac{1}{p-1}},~ p>1,\\ \\
 \displaystyle   \frac{\exp\left(- u_1V_x^\ell(t,x)+\lambda(V^{(\ell)}(t,x-u_2)-V^{(\ell)}(t,x))-\kappa u_1^2-c x^2-\gamma \right)}{\int_{\R^2}\exp\left(- u_1V_x^{(\ell)}(t,x)+\lambda(V^{(\ell)}(t,x-u_2)-V^{(\ell)}(t,x))-\kappa u_1^2-c x^2-\gamma \right) du},~ p=1.
\end{cases}
\end{align*}}
\end{lemma}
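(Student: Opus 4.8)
The plan is to solve the exploratory HJB equation \eqref{eq:exHJB000} for this specific model by a quadratic ansatz, read off the optimal policy from \eqref{eq:optimal-policy}--\eqref{eq:psi}, and then invoke the martingale characterization to confirm optimality. Specializing the coefficients ($b(t,x,u)=-u_1$, $\sigma\equiv0$, jump $\varphi=-u_2$ triggered by a Poisson process of intensity $\lambda$, running reward $-\kappa u_1^2-cx^2$, terminal reward $-\ell x^2$), the Hamiltonian \eqref{eq:Hami} becomes
\[
\mathcal{H}(t,x,u,V)=-u_1 V_x-\kappa u_1^2-cx^2+\lambda\bigl(V(t,x-u_2)-V(t,x)\bigr),
\]
and the state is a pure-jump process so no second-order term appears. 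First I would postulate $V^{(\ell)}(t,x)=\tfrac12\alpha(t)x^2+\beta(t)$ with terminal data $\alpha(T)=-2\ell$, $\beta(T)=0$. Substituting the ansatz, $\mathcal{H}$ is a quadratic form in $u=(u_1,u_2)$ with Hessian $\mathrm{diag}(-2\kappa,\lambda\alpha)$; provided $\alpha(t)<0$ (to be confirmed afterwards), it is strictly concave, and completing the square gives $\mathcal{H}+\psi+\tfrac{\gamma}{p-1}=A-\kappa\tilde u_1^2-b\tilde u_2^2$ with $b=-\lambda\alpha/2>0$, centred at $\tilde u_1=u_1+\tfrac{\alpha x}{2\kappa}$, $\tilde u_2=u_2-x$.

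Next I would enforce the normalization \eqref{eq:psi}. The candidate policy is the compactly supported ``elliptic bump'' proportional to $(A-\kappa\tilde u_1^2-b\tilde u_2^2)_+^{1/(p-1)}$, and the rescaling that maps the ellipse to the unit disc reduces $\int_{\R^2}(A-\kappa\tilde u_1^2-b\tilde u_2^2)_+^{r}d\tilde u$ to the Beta-type value $\tfrac{\pi A^{r+1}}{(r+1)\sqrt{-\kappa\lambda\alpha/2}}$. Taking $r=\tfrac{1}{p-1}$ and equating to $\bigl(\tfrac{p\gamma}{p-1}\bigr)^{1/(p-1)}$ solves for the height $A$ as an explicit function of $\alpha(t)$ only; this is the crux that makes the ansatz close, because it forces all the $x$-dependence into $\psi(t,x)$ while leaving $A$ free of $x$.

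Then I would substitute the optimal policy back into the HJB. Since the same polar reduction with $r=\tfrac{p}{p-1}$ controls both the expected Hamiltonian and the entropy, they collapse to $\int\mathcal{H}\,\pi^*\,du=\tfrac{pA}{2p-1}-(A-C)$ and $\gamma\int l_p(\pi^*)\pi^*\,du=\tfrac{\gamma}{p-1}-\tfrac{A}{2p-1}$, where $C=\bigl(\tfrac{\alpha^2}{4\kappa}-\tfrac{\lambda\alpha}{2}-c\bigr)x^2$ is the constant produced by completing the square. Matching the $x^2$ and the $x^0$ terms in \eqref{eq:exHJB000} then splits into the Riccati equation $\alpha'=-\tfrac{\alpha^2}{2\kappa}+\lambda\alpha+2c$ and the quadrature $\beta'=\tfrac{pA}{2p-1}-\tfrac{\gamma}{p-1}$. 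Solving the Riccati equation with $\alpha(T)=-2\ell$ (its equilibria are $\kappa(\lambda\mp w)$ with $w=\sqrt{\lambda^2+4c/\kappa}$) yields the stated $\alpha^{(\ell)}$, and integrating $\beta'$ after inserting the closed form of $A$ gives $\beta^{(\ell)}$; the line $p=1$ follows from the limit $p\downarrow1$, recovering the Gaussian Gibbs policy \eqref{eq:optimal-policy-Gibbs}.

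The hard part will be the self-consistent treatment of the normalizing function together with the sign constraint $\alpha(t)<0$ on $[0,T]$: concavity of $\mathcal{H}$ in $u_2$, hence the very existence of a genuine compactly supported density, requires $\lambda\alpha<0$, so I would have to check from the explicit $\alpha^{(\ell)}$ that its denominator never vanishes and that $\alpha^{(\ell)}<0$ throughout (using $\alpha(T)=-2\ell<0$ and the attraction toward the negative equilibrium $\kappa(\lambda-w)<0$). Once this is in hand, optimality of $\widehat\pi^{(\ell)}$ and the identity $V^{(\ell)}=\sup_\pi J^{(\ell)}$ follow from Theorem \ref{thm:martingale-condition} (equivalently, by exhibiting $\widehat\pi^{(\ell)}$ as a fixed point of the improvement map of Theorem \ref{thm:PIT}), with the admissibility requirements of Definition \ref{def:admissible-pi} verified through the polynomial moment bounds of the bump density.
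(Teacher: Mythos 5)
Your proposal is correct and follows essentially the same route as the paper's own proof: quadratic ansatz $V^{(\ell)}(t,x)=\tfrac{\alpha(t)}{2}x^2+\beta(t)$, completion of the square to obtain the compactly supported elliptic-bump policy, normalization of that bump by rescaling to the unit disc and a polar-coordinate Beta-type integral (the paper's $\Psi=\tfrac{p-1}{p}\pi$), the same moment and entropy reductions yielding $\int \mathcal{H}\,\pi^*du$ and $\gamma\int l_p(\pi^*)\pi^*du$ in terms of the height $A=\tilde\psi(t)$, and finally the Riccati equation $\alpha'=-\tfrac{\alpha^2}{2\kappa}+\lambda\alpha+2c$ together with the quadrature $\beta'=\tfrac{pA}{2p-1}-\tfrac{\gamma}{p-1}$ (your verification step via Theorem \ref{thm:martingale-condition} and the sign check $\alpha<0$ are refinements the paper defers or leaves implicit). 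One minor remark: you impose $\alpha(T)=-2\ell$, which is what the terminal reward $-\ell x^2$ actually requires, whereas the paper's proof and its stated closed form use $\alpha^{(\ell)}(T)=-\ell$ --- an internal factor-of-two inconsistency of the paper --- so your derivation would reproduce the displayed formula with $\ell$ replaced by $2\ell$, a discrepancy that is immaterial in the $\ell\to\infty$ limit used in Theorem \ref{th:LQ-entropytsallis}.
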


\begin{proof}
Under the formulation of problem~\eqref{eq:RL-problem-appl}, we have the following exploratory HJB equation that,  for $u=(u_1,u_2)\in\R^2$, 
\begin{align}\label{eq:exHJBnew}
\begin{cases}
\displaystyle    0={ V_t^{(\ell)}(t,x)+}\sup_{\pi_t\in\mathcal{P}(U)}\bigg\{-V_x^{(\ell)}(t,x)\int_{U} u_1\pi(u|t,x)du\\[1.2em]
\displaystyle \qquad\qquad+\lambda\int_{U}\left(V^{(\ell)}\left(t,x-u_2\right)-V^{(\ell)}(t,x)\right)\pi(u|t,x)du\\[1em]
\displaystyle\qquad\qquad+\int_{U} (-\kappa u_1^2-cx^2+\gamma l_p(\pi(u|t,x)))\pi(u|t,x)du\bigg\},\\[1.2em]
\displaystyle V^{(\ell)}(T,x)=-\ell x^2.
\end{cases}
\end{align}
To enforce the constraints $\int_{U} \pi(u|t,x)du=1$ and $\pi(u|t,x)\geq 0$ for 
 $(t,x,u)\in[0,T]\times\R^3$, we introduce the Lagrangian given by
\begin{align*}
{\cal L}(t,x,\pi,\xi,\psi)&=
-V_x^{(\ell)}(t,x)\int_{U} u_1\pi(u|t,x)du+\lambda\int_{U}\left(V^{(\ell)}\left(t,x-u_2\right)-V^{(\ell)}(t,x)\right)\pi(u|t,x)du\nonumber\\
&\quad+\int_{U} \left((-\kappa u_1^2-c x^2)\pi(u|t,x)+\frac{\gamma}{p-1}(\pi(u|t,x)-\pi^{p}(u|t,x))\right)du\notag\\
&\quad +\psi(t,x)\left(\int_{U} \pi(u|t,x)du-1\right)
+\int_U \zeta(t,u)\pi(u|t,x)du,
\end{align*}
where $\psi(t,x)$ is a function of $(t,x)\in [0,T]\times \R$ and $\zeta(t,u)$ is a function of $(t,u)\in [0,T] \times \R^2$.
It follows from the first-order condition that, the candidate optimal policy is 
{\footnotesize
\begin{align*}
\widehat{\pi}^{(\ell)}(u|t,x)=\begin{cases}
\displaystyle    \left(\frac{p-1}{\gamma  p}\right)^{\frac{1}{p-1}}\left(\psi(t,x)- u_1V_x^{(\ell)}(t,x)+\lambda(V^{(\ell)}(t,x-u_2)-V^{(\ell)}(t,x))-\kappa u_1^2-c x^2+\frac{\gamma }{p-1}\right)_+^{\frac{1}{p-1}},~p>1,\\[0.8em]
 \displaystyle   \frac{\exp\left(- u_1V_x^{(\ell)}(t,x)+\lambda(V^{(\ell)}(t,x-u_2)-V^{(\ell)}(t,x))-\kappa u_1^2-c x^2-\gamma \right)}{\int_{\R^2}\exp\left(- u_1V_x^{(\ell)}(t,x)+\lambda(V^{(\ell)}(t,x-u_2)-V^{(\ell)}(t,x))-\kappa u_1^2-c x^2-\gamma \right) du},~p=1
\end{cases}
\end{align*}}with the multiplier $\zeta(t,u)$ given by
\begin{align*}
\zeta(t,u)=
    \left(u_1V_x^{(\ell)}(t,x)-\lambda(V^{(\ell)}(t,x-u_2)-V^{(\ell)}(t,x))+\kappa u_1^2+c x^2-\frac{
    \gamma }{p-1}-\psi(t,x)\right)_+,\quad \forall p>1.
\end{align*}
We only provide the details on the construction of the solution to \eqref{eq:exHJBnew} for the case $q>1$ as the case $q= 1$ is essentially the same. Consider the form $V^{(\ell)}(t,x)=\frac{\alpha^{(\ell)}}{2}(t)x^2+\beta^{(\ell)}(t)$ for $(t,x)\in[0,T]\times\R$. By substituting it into the above policy, we have, for $p>1$, 
\begin{align*}
\widehat{\pi}^{(\ell)}(u|x)
&= \left(\frac{(p-1)\tilde \psi(t,x)}{\gamma p}\right)^{\frac{1}{p-1}}\left(1-
    \frac{\kappa}{\tilde \psi(t,x)} \left(u_1+\frac{\alpha^{(\ell)}(t)x}{2\kappa}\right)^2+
    \frac{\alpha^{(\ell)}(t)\lambda }{2\tilde \psi(t,x)}(u_2-x)^2\right)_+^{\frac{1}{p-1}},
\end{align*}
where $\tilde \psi(t,x)= \psi(t,x)-c x^2+\frac{(\alpha^{(\ell)}(t))^2}{4\kappa}x^2-\frac{\alpha^{(\ell)}(t)\lambda}{2}x^2+\frac{\gamma }{p-1}$ is assumed to be greater than zero, which will be verified later. Then, using the constraint  $\int_{U} \pi(u|x)du=1$, we have
\begin{align*}
   1&= \left(\frac{(p-1)\tilde \psi(t,x)}{\gamma p}\right)^{\frac{1}{p-1}}\int_{\R^2}\left(1-
    \frac{\kappa}{\tilde \psi(t,x)} \left(u_1+\frac{\alpha^{(\ell)}(t)x}{2\kappa}\right)^2+
    \frac{\alpha^{(\ell)}(t)\lambda }{2\tilde \psi(t,x)}(u_2-x)^2\right)_+^{\frac{1}{p-1}}du\\
    &=\left(\frac{(p-1)\tilde \psi(t,x)}{\gamma p}\right)^{\frac{1}{p-1}}\sqrt{\frac{2\tilde \psi^2(t,x)}{-\lambda \kappa \alpha^{(\ell)}(t)}}\int_{y^2+z^2\leq 1}\left(1-
    y^2-
    z^2\right)^{\frac{1}{p-1}}dydz\\
    &=\tilde \psi^{\frac{p}{p-1}}(t,x)\left(\frac{p-1}{\gamma p}\right)^{\frac{1}{p-1}}\sqrt{\frac{2}{-\lambda \kappa \alpha^{(\ell)}(t)}}\Psi,
\end{align*}
where $ \Psi:=\int_{y^2+z^2\leq 1}\left(1-
    y^2-
    z^2\right)^{\frac{1}{p-1}}dydz$. By using the polar coordinate transformation $(y,z)=(\rho cos \theta,\rho \sin \theta)$ for $(\rho,\theta) \in [0,1]\times [0,2\pi]$, we derive that
\begin{align*}
    \Psi&=\int_{y^2+z^2\leq 1}\left(1-
    y^2-
    z^2\right)^{\frac{1}{p-1}}dydz=\int_0^{2\pi}\int_{0}^{1}\left(1-
    \rho^2\right)^{\frac{1}{p-1}}\rho d\rho d\theta=\frac{p-1}{p}\pi.
\end{align*}
Furthermore, it holds that
\begin{align*}
\tilde \psi(t,x)\equiv \left(\frac{1}{\Psi}\sqrt{\frac{-\lambda \kappa \alpha^{(\ell)}(t)}{2}}\right)^{\frac{p-1}{p}}\left(\frac{\gamma p}{p-1}\right)^{\frac{1}{p}}=\left(\frac{1}{\pi}\sqrt{\frac{-\lambda \kappa \alpha^{(\ell)}(t)}{2}}\right)^{\frac{p-1}{p}}\frac{p }{p-1}\gamma ^{\frac{1}{p}},
\end{align*}
and $\psi(t,x)=\left(c -\frac{(\alpha^{(\ell)}(t))^2}{4\kappa}+\frac{\alpha^{(\ell)}(t)\lambda }{2}\right)x^2+\left(\frac{1}{\pi}\sqrt{\frac{-\kappa \lambda \alpha^{(\ell)}(t)}{2}}\right)^{\frac{p-1}{p}}\frac{p }{p-1}\gamma ^{\frac{1}{p}}-\frac{\gamma }{p-1}$. As $\tilde \psi(t,x)$  does not depend on  $x\in \R$, we may write it as $\tilde \psi (t)$.

To solve \eqref{eq:exHJBnew}, we consider the following moments, for $(u_1,u_2)\in\R^2$, 
\begin{align*}
    &\int_{\R^2} u_1^m \pi^{(\ell)}(u|t,x)du\\
    &=\tilde \psi(t)^{\frac{p}{p-1}}\left(\frac{p-1}{\gamma p}\right)^{\frac{1}{p-1}}\sqrt{\frac{2}{-\lambda \kappa \alpha^{(\ell)}(t)}}\int_{y^2+z^2\leq 1}\left(\sqrt{\frac{\tilde{\psi}(t)}{\kappa }}y-\frac{\alpha^{(\ell)}(t)x}{2\kappa}\right)^m \left(1-
    y^2-z^2\right)^{\frac{1}{p-1}}dydz\notag\\
    &=\tilde{\psi}(t)^{\frac{p}{p-1}}\left(\frac{p-1}{\gamma p}\right)^{\frac{1}{p-1}}\sqrt{\frac{2}{-\lambda \kappa \alpha^{(\ell)}(t)}}\int_0^{2\pi}\int_0^1\left(\sqrt{\frac{\tilde \psi(t)}{\kappa}}\rho \cos \theta-\frac{\alpha^{(\ell)}(t)x}{2\kappa}\right)^m \left(1-
   \rho^2\right)^{\frac{1}{p-1}}\rho d\rho d\theta\\
   &=\begin{cases}
     -\frac{\alpha^{(\ell)}(t)x}{2\kappa},~m=1,\\
     \ \\
     \frac{\tilde{\psi}(t) (p-1)}{2\kappa (2p-1 )}+\frac{(\alpha^{(\ell)}(t) x)^2}{4\kappa^2},~m=2,
   \end{cases}
   \end{align*}
   as well as 
   \begin{align*}
    &\int_{\R^2} u_2^m \pi^{(\ell)}(u|t,x)du\\
    &=\tilde \psi(t)^{\frac{p}{p-1}}\left(\frac{p-1}{\gamma p}\right)^{\frac{1}{p-1}}\sqrt{\frac{2}{-\lambda \kappa\alpha^{(\ell)}(t)}}\int_{y^2+z^2\leq 1}\left(\sqrt{-\frac{2\tilde{\psi}(t)}{\alpha^{(\ell)}(t)\lambda }}y+x\right)^m \left(1-
    y^2-z^2\right)^{\frac{1}{p-1}}dydz\notag\\
    &=\tilde{\psi}(t)^{\frac{p}{p-1}}\left(\frac{p-1}{\gamma p}\right)^{\frac{1}{p-1}}\sqrt{\frac{2}{-\lambda \kappa \alpha^{(\ell)}(t)}}\int_0^{2\pi}\int_0^1\left(\sqrt{-\frac{2\tilde{\psi}(t)}{\alpha^{(\ell)}(t)\lambda }}\rho \cos \theta+x\right)^m \left(1-
   \rho^2\right)^{\frac{1}{p-1}}\rho d\rho d\theta\\
   &=\begin{cases}
  \displaystyle     x, & m=1,\\[0.8em]
  \displaystyle -\frac{\tilde{\psi}(t) (p-1)}{\alpha^{(\ell)}(t)\lambda(2p-1)} +x^2, & m=2,
   \end{cases}
\end{align*}
and 
\begin{align*}
    &~\int_{\R^2}  \frac{1}{p-1}\left(\pi^{(\ell)}(u|t,x)-\pi^{(\ell)}(u|t,x)^{p}\right)du\\
    &=\frac{1}{p-1}-\frac{1}{p-1}\left(\frac{(p-1)\tilde\psi(t)}{\gamma p}\right)^{\frac{p}{p-1}}\sqrt{\frac{2\tilde\psi(t)^2}{-\lambda \kappa \alpha^{(\ell)}(t)}}\int_0^{2\pi}\int_{0}^1 \left(1-
\rho^2\right)^{\frac{p}{p-1}}\rho d\rho d\theta\\
&=\frac{1}{p-1}-\frac{\tilde\psi(t)}{(2p-1)\gamma }.
\end{align*}
Then, substituting the candidate solution $V^{(\ell)}(t,x)=\frac{\alpha^{(\ell)}(t)}{2}x^2+\beta^{(\ell)}(t)$ back into \eqref{eq:exHJBnew}, we obtain that
\begin{equation*}
\begin{cases}
 \displaystyle {\alpha^{(\ell)}_t(t)}=-\frac{(\alpha^{(\ell)}(t))^2}{2\kappa}+{\lambda} \alpha^{(\ell)}(t)+2c, ~~\alpha^{(\ell)}(T)=-\ell,\\[0.8em]  
 \displaystyle \beta^{(\ell)}_t(t)=\frac{\tilde \psi(t) (p-1)}{2p-1}-\gamma \left(\frac{1}{p-1}-\frac{\tilde \psi(t)}{(2p-1)\gamma}\right),~~\beta^{(\ell)}(T)=0.
\end{cases}
\end{equation*}
Solving the above equation, we have 
\begin{align*}
\begin{cases}
 \displaystyle    \alpha^{(\ell)}(t)=-\frac{ \left(\ell\kappa (w -\lambda)+4c \kappa\right)e^{w (T-t)}+ \ell\kappa (w +\lambda)-4c \kappa}{ \left(\kappa (w +\lambda)+\ell\right)e^{w (T-t)} + \kappa (w -\lambda)-\ell},\\[1.2em]
  \displaystyle   \beta^{(\ell)}(t)=-\frac{p^2 \gamma ^{\frac{1}{p}}}{(2p-1)(p-1)}\int_t^T \left(\frac{1}{\pi}\sqrt{\frac{-\kappa \lambda \alpha^{(\ell)}(s)}{2}}\right)^{\frac{p-1}{p}}ds+\frac{\gamma}{p-1}(T-t).
\end{cases}
\end{align*}
Thus, the proof of the lemma is completed.
\end{proof}

Building upon Lemma~\ref{lem:Vell}, under the liquidation constrain, we consider the reinforcement learning problem in the limit sense that 
\begin{align*}
V(t,x):=\lim_{\ell \to \infty}V^{(\ell)} (t,x),\quad \forall (t,x) \in [0,T]\times \R.
\end{align*}
Then, by some standard verification arguments, we have the next result.
\begin{theorem}
\label{th:LQ-entropytsallis}
The liquidation cost minimization problem~\eqref{eq:omega} under the Tsallis entropy  has the explicit solution that
    \begin{align*}
        V(t,x)=\frac{\alpha^*(t)}{2}x^2+\beta^*(t),\quad \forall (t,x)\in[0,T]\times\R,
    \end{align*}
where the coefficients are specified by
\begin{align*}
    \alpha^*(t)&= \lim_{\ell\to\infty}\alpha^{(\ell)}(t)=- \kappa (\omega -\lambda)-\frac{ 2\kappa \omega}{ e^{\omega (T-t)} -1}<0,\\
    \beta^*(t)&= \lim_{\ell\to\infty}\beta^{(\ell)}(t)=\begin{cases}
      \displaystyle  -\frac{p^2 \gamma 
    ^{\frac{1}{p}}}{(2p-1)(p-1)}\int_t^T \left(\frac{1}{\pi}\sqrt{\frac{-\kappa \lambda \alpha^*(s)}{2}}\right)^{\frac{p-1}{p}}ds+\frac{\gamma }{p-1}(T-t),&p>1,\\[1.2em]
    \displaystyle \gamma \int_t^T \ln \left(\frac{\gamma \pi }{\sqrt{-\kappa \lambda \alpha^*(s)/2}}\right)ds,&p=1
    \end{cases}
\end{align*}
with $\omega:=\sqrt{\lambda^2+\frac{4 c}{\kappa}}$. The optimal policy for problem ~\eqref{eq:omega} is given by
\begin{align}\label{eq:optimal-policy-darkpool}
\widehat{\pi}(u|t,x)
    &=\begin{cases}
 \displaystyle     \left(\frac{p-1}{\gamma p}\right)^{\frac{1}{p-1}}\left(\tilde \psi(t)-
    {\kappa} \left(u_1+\frac{\alpha^*(t)x}{2\kappa}\right)^2+
    \frac{\alpha^*(t)\lambda }{2}(u_2-x)^2\right)_+^{\frac{1}{p-1}}, & p>1,\\[1.4em]
\displaystyle    \frac{1 }{\gamma \pi  }\sqrt{-\frac{\kappa \lambda \alpha^*(t)}{2}}\exp\left(-\frac{\kappa \left(u_1+\frac{\alpha^*(t) x}{2\kappa }\right)^2}{\gamma}+\frac{\lambda \alpha^*(t) (u_2-x)^2}{2 \gamma}\right), & p=1.
    \end{cases} 
\end{align}
Here, $\tilde \psi(t)=
        \left(\frac{1}{\pi}\sqrt{\frac{-\kappa \lambda \alpha^*(t)}{2}}\right)^{\frac{p-1}{p}}\frac{p \gamma^{\frac{1}{p}}}{p-1}$ for $t\in[0,T]$.

\end{theorem}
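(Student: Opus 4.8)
The plan is to derive Theorem~\ref{th:LQ-entropytsallis} as the $\ell\to\infty$ limit of the explicitly solved penalized problem~\eqref{eq:RL-problem-appl} supplied by Lemma~\ref{lem:Vell}, and then to argue that this limit is genuinely the value function of the hard-constraint problem~\eqref{eq:omega}. First I would compute $\alpha^*(t)=\lim_{\ell\to\infty}\alpha^{(\ell)}(t)$ straight from the closed form in Lemma~\ref{lem:Vell}: dividing both the numerator and the denominator of $\alpha^{(\ell)}(t)$ by $\ell$ and letting $\ell\to\infty$ eliminates the $c\kappa$ terms and the bounded $\kappa(w\pm\lambda)$ constants, leaving $\alpha^*(t)=-\frac{\kappa(w-\lambda)e^{w(T-t)}+\kappa(w+\lambda)}{e^{w(T-t)}-1}$, which I would rearrange into the stated form $-\kappa(w-\lambda)-\frac{2\kappa w}{e^{w(T-t)}-1}$. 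I would record that $e^{w(T-t)}>1$ on $[0,T)$ forces $\alpha^*(t)<0$, and that $\alpha^*(t)\to-\infty$ as $t\uparrow T$ with $-\alpha^*(s)\sim \frac{2\kappa}{T-s}$; this singular blow-up is precisely the analytic signature of the liquidation constraint encoded by $g_T$.

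For $\beta^*$, since $\alpha^{(\ell)}(s)\downarrow\alpha^*(s)$ pointwise (larger penalty drives the coefficient more negative), the integrand $\big(\tfrac{1}{\pi}\sqrt{-\kappa\lambda\alpha^{(\ell)}(s)/2}\big)^{(p-1)/p}$ increases in $\ell$ toward its limit, whose singularity at $s=T$ is only of order $(T-s)^{-(p-1)/(2p)}$ and is integrable because $(p-1)/(2p)<1$ for every $p\ge1$; monotone convergence then lets me pass the limit under the integral and recover $\beta^*$ (with the $p=1$ case handled identically using the logarithmic integrand). I would also check that the limiting pair solves the correct system by passing to the limit in the ODEs of Lemma~\ref{lem:Vell}: $\alpha^*$ satisfies the same Riccati equation $\dot\alpha=-\alpha^2/(2\kappa)+\lambda\alpha+2c$, now with the singular condition $\alpha^*(t)\to-\infty$ as $t\uparrow T$ in place of $\alpha^{(\ell)}(T)=-\ell$, while $\beta^*$ satisfies its linear ODE with $\beta^*(T)=0$, both limits being legitimate since the right-hand sides depend continuously on $\alpha$.

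The substantive step is to verify that $V(t,x)=\lim_{\ell}V^{(\ell)}(t,x)=\tfrac{\alpha^*(t)}{2}x^2+\beta^*(t)$ is the value function of~\eqref{eq:omega} rather than merely a pointwise limit of penalized values. Because $-\ell X_T^2\ge -g_T(X_T)$ holds for every terminal state, we have $J^{(\ell)}(t,x;\pi)\ge J_{\rm DP}$-type hard payoff pointwise in $\pi$, hence $V^{(\ell)}\ge w$; as $V^{(\ell)}$ is nonincreasing in $\ell$ the limit exists and satisfies $V\ge w$. For the reverse inequality I would exploit the singular terminal condition: since $\alpha^*(t)\to-\infty$ as $t\uparrow T$, the feedback policy $\widehat\pi$ built from $V$ keeps the payoff finite only by driving the controlled state to $X_T=0$, so $\widehat\pi$ is admissible for the hard-constraint problem, and a verification computation applying It\^o to $V$ along $\widehat\pi$ shows it attains $V$, giving $w\ge V$. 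Combining the two inequalities yields $w=V$, and substituting $V$ into the exploratory HJB equation~\eqref{eq:exHJBnew} confirms it is the classical solution.

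Finally, I would read off the optimal policy by inserting $\alpha^*$ and $\tilde\psi(t)=\big(\tfrac{1}{\pi}\sqrt{-\kappa\lambda\alpha^*(t)/2}\big)^{(p-1)/p}\frac{p\,\gamma^{1/p}}{p-1}$ into the candidate feedback $\widehat\pi^{(\ell)}$ from Lemma~\ref{lem:Vell} and letting $\ell\to\infty$, producing the compactly supported density for $p>1$ and the Gaussian for $p=1$ displayed in~\eqref{eq:optimal-policy-darkpool}. The main obstacle I anticipate is the third step: rigorously interchanging the $\ell\to\infty$ limit with the supremum over policies and verifying admissibility and optimality of $\widehat\pi$ in the presence of the singular terminal condition $\alpha^*(t)\to-\infty$, for which I would lean on the penalization and verification arguments developed for liquidation problems with singular terminal values in \cite{kratz2015portfolio}.
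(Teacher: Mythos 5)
Your proposal is correct and follows the same overall strategy as the paper's proof: pass to the limit $\ell\to\infty$ in Lemma~\ref{lem:Vell} to obtain the candidates $\alpha^*$, $\beta^*$, $V$ and $\widehat\pi$, then close the argument with a verification step identifying $V$ with the value function $w$ of problem~\eqref{eq:omega}. The one genuine difference lies in how the inequality $V\geq w$ is obtained. You use the pointwise terminal comparison $-\ell x^2\geq -g_T(x)$, which gives $J^{(\ell)}(t,x;\pi)\geq$ the hard-constraint payoff for every admissible $\pi$, hence $V^{(\ell)}\geq w$ for every $\ell$, and then monotonicity of $\ell\mapsto V^{(\ell)}$ to pass to the limit. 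The paper instead derives this inequality from the exploratory HJB equation~\eqref{eq:VHJB}: for an arbitrary admissible $\pi$ it applies It\^o's formula to $V(s,X_s)$ and uses the fact that the supremum in the HJB equation dominates the generator evaluated at $\pi$. Your route is more elementary---it requires neither smoothness of the limit $V$ nor the HJB equation for this half---while the paper's is the standard verification inequality; both are sound. For the reverse inequality $w\geq V$ the two arguments coincide: It\^o's formula along the candidate policy $\widehat\pi$ plus localization. Note that here both you and the paper leave the same delicate point informal, namely that under $\widehat\pi$ the state is actually liquidated ($\widehat X_T=0$ a.s.), so that $-g_T(\widehat X_T)=0$ and the transversality limit $\lim_{s\uparrow T}\Ex\left[\alpha^*(s)\widehat X_s^2/2\right]=0$ holds despite $\alpha^*(s)\to-\infty$; your explicit appeal to the singular-terminal-value arguments of \cite{kratz2015portfolio} is the right way to make this rigorous, and it is something the paper's ``standard localization arguments'' implicitly require as well.
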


We have the next remark on different entropy index:

\begin{remark}\label{example-one-distribution}
    For the case with $p=1$, the optimal policy $\widehat{\pi}$ given by \eqref{eq:optimal-policy-darkpool} is a two-dimensional Gaussian distribution; while for $p>1$, the optimal policy becomes a two-dimensional $p$-Gaussian distribution with a compact support set, see Figure \ref{fig:policy-darkpool} for the illustration.
        The mean and variance of the policy $(u_1,u_2)$ are given by 
  \begin{align}\label{eq:moment01}
&{\rm mean} (u_1,u_2)=\left(\int_{\R^2} u_1 \widehat{\pi}(u)du,\int_{\R^2} u_2 \widehat{\pi}(u)du\right)=
  \left( -\frac{\alpha^{(\ell)}(t)x}{2\kappa},x\right),\\
&{\rm Var} (u_1,u_2)=\left(\int_{\R^2} u_1^2 \widehat{\pi}(u)du-\left(\int_{\R^2} u_1 \widehat{\pi}(u)du\right)^2,\int_{\R^2} u_2^2 \widehat{\pi}(u)du-\left(\int_{\R^2} u_2 \widehat{\pi}(u)du\right)^2\right)\notag\\
&\qquad \qquad =
 \left(
     \frac{\tilde{\psi}(t) (p-1)}{2\kappa (2p-1 )},-\frac{\tilde{\psi}(t) (p-1)}{\alpha^{(\ell)}(t)\lambda(2p-1)}\right).
\end{align}
    In fact, for $p>1$ and $(t,x)\in[0,T]\times \R_+$, we have
    \begin{align}
    u_1&\in{\cal O}_1^p:=\left[-\frac{\alpha^*(t)x}{2\kappa}-\sqrt{\frac{\tilde{\psi}(t)}{\kappa}},-\frac{\alpha^*(t)x}{2\kappa}+\sqrt{\frac{\tilde{\psi}(t)}{\kappa}}\right],\label{eq:set-1}\\
    u_2&\in{\cal O}_2^p:=\left[x-\sqrt{-\frac{2\tilde{\psi}(t)}{\lambda \alpha^*(t)}},x+\sqrt{-\frac{2\tilde{\psi}(t)}{\lambda \alpha^*(t)}}\right],\label{eq:set-2}
    \end{align}
    where the functions $t\mapsto\alpha^*(t)$ and $t\mapsto\tilde{\psi}(t)$ are given in Theorem \ref{th:LQ-entropytsallis}.
\begin{figure}[htbp]
\centering
  \subfigure[]{
        \includegraphics[width=5cm]{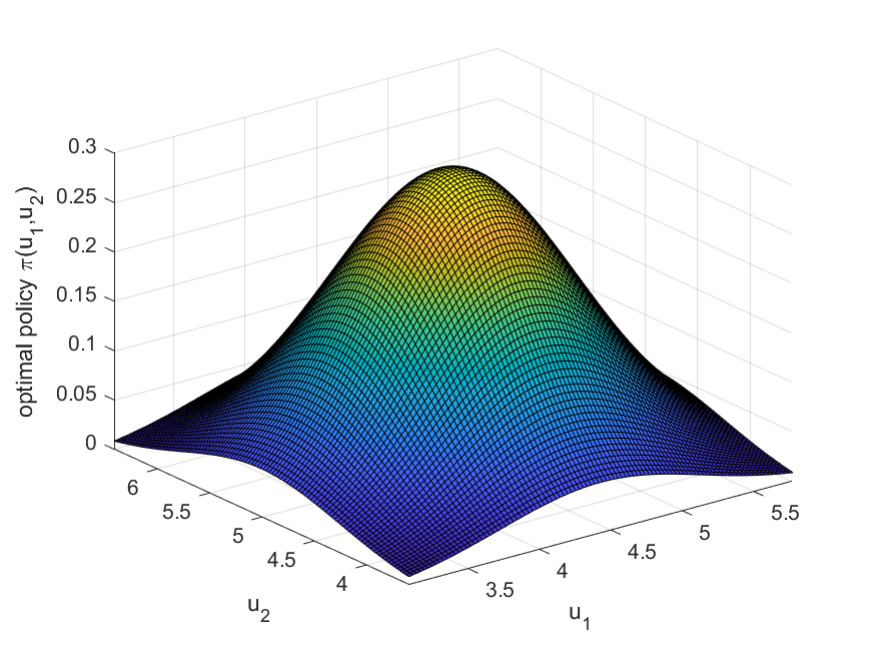}
    }\hspace{-4mm}
  \subfigure[]{
        \includegraphics[width=5cm]{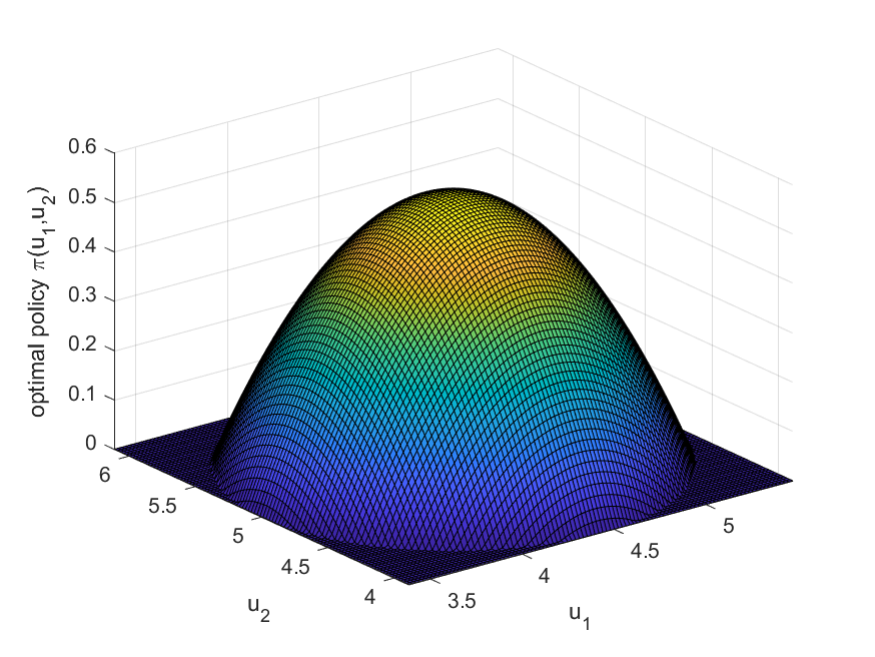}
    }\hspace{-4mm}
  \subfigure[]{
        \includegraphics[width=5cm]{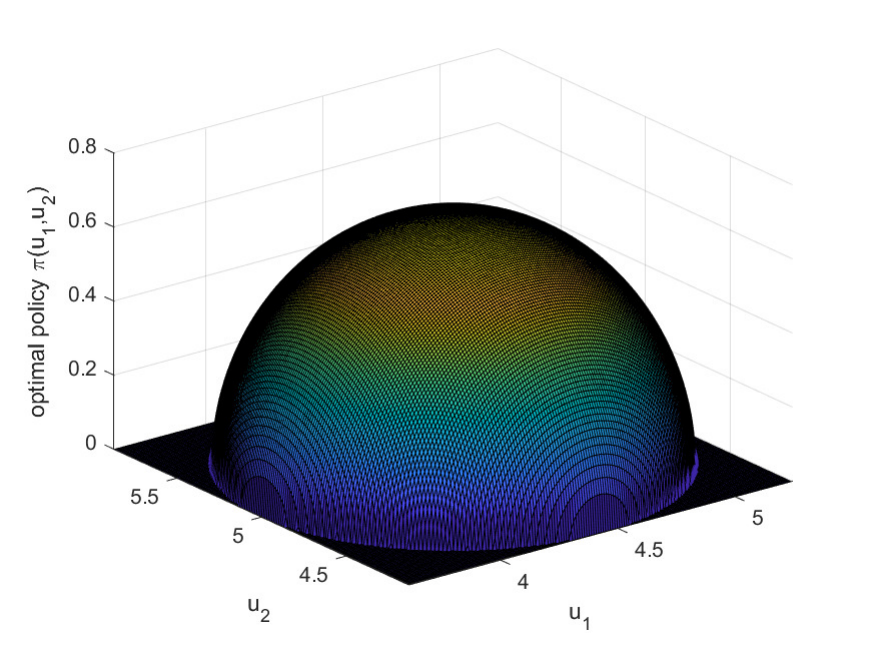}
    }
 \caption{\small (a) The optimal policy $(u_1,u_2)\to \hat{\pi}(u_1,u_2)$ with $p=1$. (b): The optimal policy $(u_1,u_2)\to \hat{\pi}(u_1,u_2)$ with $p=2$. (c): The optimal policy $(u_1,u_2)\to \hat{\pi}(u_1,u_2)$ with $p=3$. The model parameters are set to be  $\lambda=1,~\kappa=1,~c=1,~\gamma=1,~t=1,~T=2,~x=5$.}\label{fig:policy-darkpool}
\end{figure}
Furthermore, we can see that the length of the compact support set for the control $u_1$ (or $u_2$) can either decrease initially and then increase as the entropy index $p$ increases  (see Figure \ref{fig:set-1}), or decrease monotonically as $p$ increases (see Figure \ref{fig:set-2}). As the index $p\to \infty$ , we have that
\begin{align*}
\lim_{p \to \infty}\tilde \psi(t)=\frac{1}{\pi}\sqrt{\frac{-\kappa \lambda \alpha^*(t)}{2}},\quad \forall t\in[0,T).
\end{align*}
Thus, the compact support sets in \eqref{eq:set-1}-\eqref{eq:set-2} converge to the following domains respectively
\begin{align*}
\lim_{p\to \infty}{\cal O}_1^p&=\left[-\frac{\alpha^*(t)x}{2\kappa}-\sqrt{\frac{1}{\pi}\sqrt{\frac{- \lambda \alpha^*(t)}{2\kappa}}},-\frac{\alpha^*(t)x}{2\kappa}+\sqrt{\frac{1}{\pi}\sqrt{\frac{- \lambda \alpha^*(t)}{2\kappa}}}\right],\\
\lim_{p\to \infty}{\cal O}_2^p&=\left[x-\sqrt{\frac{1}{\pi}\sqrt{\frac{-2\kappa }{\lambda \alpha^*(t)}}},x+\sqrt{\frac{1}{\pi}\sqrt{\frac{-2\kappa }{\lambda \alpha^*(t)}}}\right],
\end{align*}
which can also be observed from Figure \ref{fig:set-1} and  Figure \ref{fig:set-2}.
It demonstrates that the entropy index 
$p$ can essentially determine the compact support of the exploratory policy. As $p\to 1$ this framework recovers the Shannon entropy limit. Therefore, while a Tsallis-regularized algorithm may not offer a direct advantage in convergence or sample efficiency over its Shannon counterpart, it brings a practical benefit of bounded sampling of actions from the compact support, which might be crucial in some applications. 

\begin{figure}[htbp]
\centering
  \subfigure[]{
        \includegraphics[width=7cm]{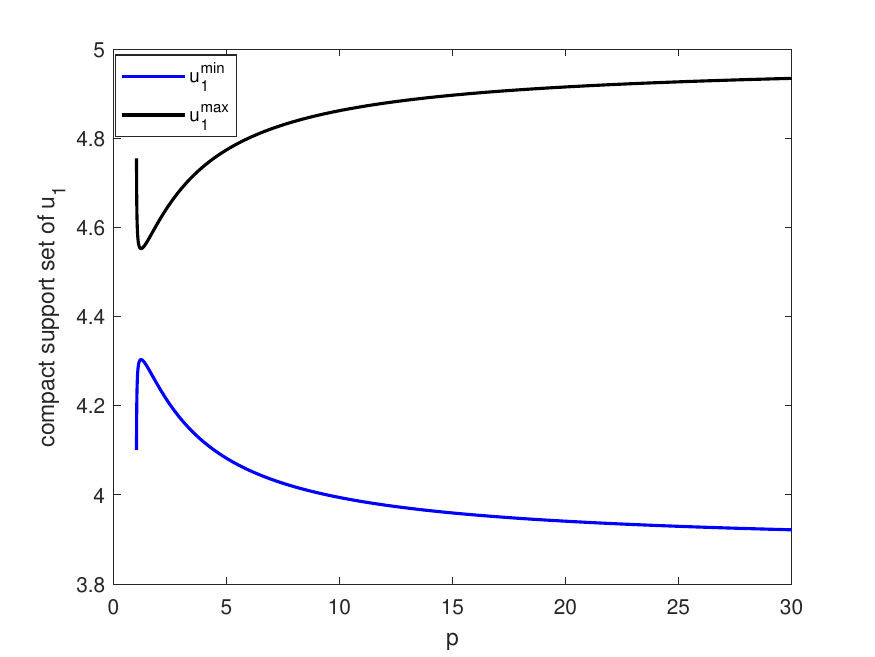}
    }\hspace{-4mm}
  \subfigure[]{
        \includegraphics[width=7cm]{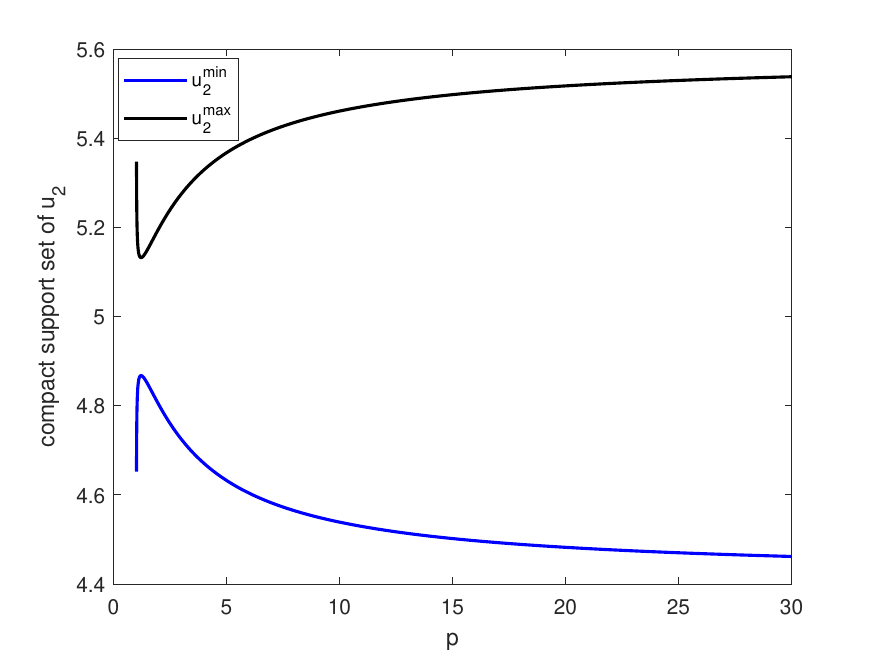}
    }
     \caption{\small (a): The compact support set of $u_1$. (b) The compact support set of $u_2$. The model parameters are set to be  $\lambda=1,~\kappa=1,~c=1,~\gamma=0.001,~t=1,~T=2,~x=5$.}\label{fig:set-1}
    \end{figure}
 \begin{figure}[htbp]
\centering
  \subfigure[]{
        \includegraphics[width=7cm]{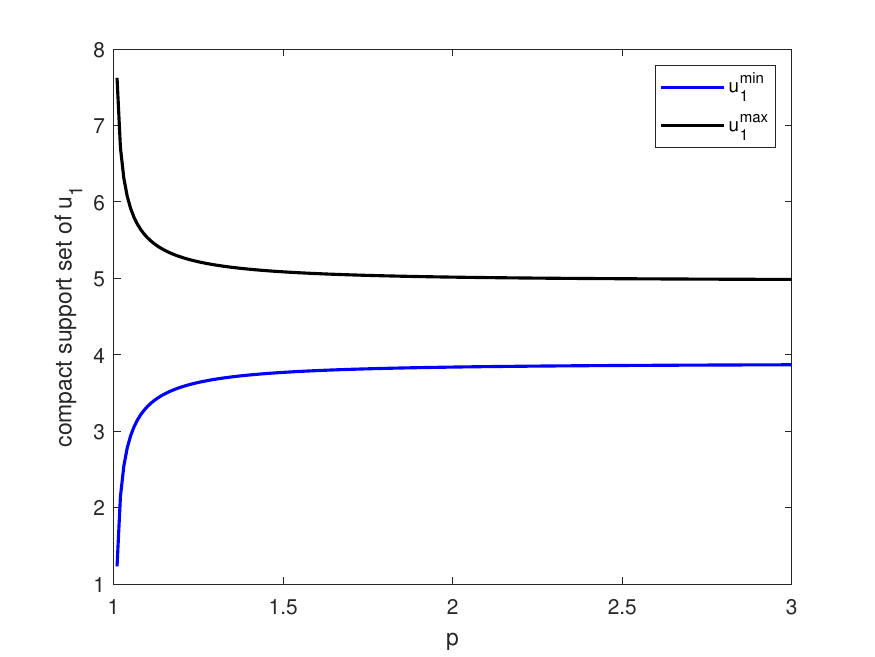}
    }\hspace{-4mm}
     \subfigure[]{
        \includegraphics[width=7cm]{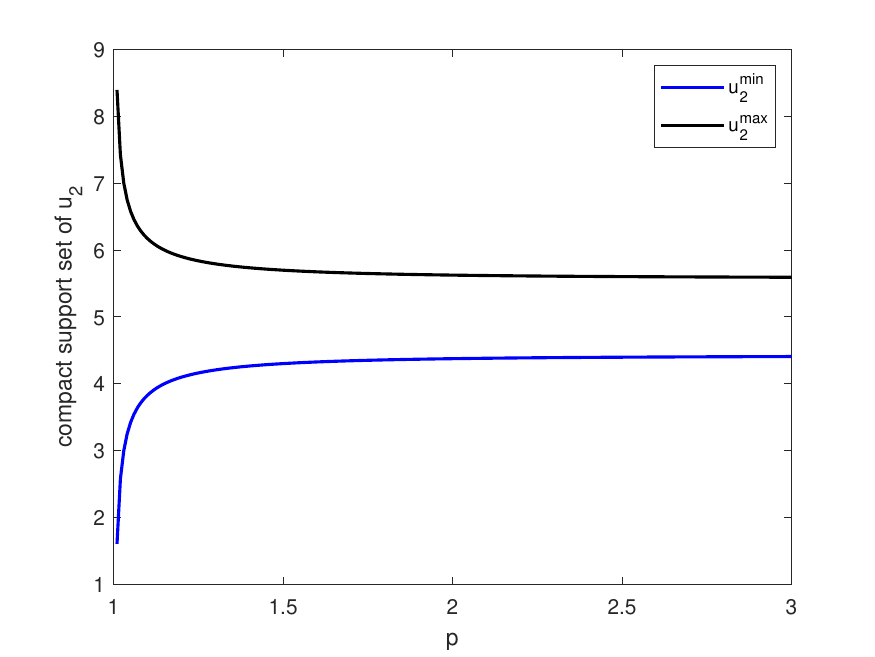}
    }\hspace{-4mm}
 \caption{\small (a): The compact support set of $u_1$. (b) The compact support set of $u_2$. The model parameters are set to be  $\lambda=1,~\kappa=1,~c=1,~\gamma=0.1,~t=1,~T=2,~x=5$.}\label{fig:set-2}
\end{figure}
\end{remark}

\begin{remark}
When the temperature parameter $\gamma$ goes to $0$, we have $\tilde{\psi}(t) (p-1)\to 0$. It then follows that
\begin{align*}
\int_{\R^2} (u_1^2+u_2^2) \pi(u|t,x)du &=
\frac{\tilde{\psi}(t) (p-1)}{2\kappa (2p-1 )}+\frac{(\alpha^* (t) x)^2}{4\kappa^2}-\frac{\tilde{\psi}(t) (p-1)}{\alpha^*(t)\lambda(2p-1)} +x^2\\
&\xrightarrow[]{\gamma\to 0} \left(\frac{\alpha^* (t) x}{2\kappa}\right)^2+x^2,
\end{align*}
which yields the convergence of the optimal trading policy to a  deterministic strategy that $(\xi_t,\eta_t)\xrightarrow[\gamma\to 0]{L^2} \left(\frac{\alpha^* (t) x}{2\kappa},x\right)$ for all $(t,x)\in [0,T]\times\R$.  Note that $( \frac{\alpha^*(t)x}{2\kappa}, x)$ is the optimal strategy to the classical stochastic control problem ($\gamma=0$). That is, the exploratory problem reduces to the classical problem as temperature parameter $\gamma \to 0$.

\end{remark}
Due to the singularity of terminal condition in \eqref{eq:darkpool-g}, applying q-learning algorithm directly to the primal problem \eqref{eq:darkpoolJDP} may bring great numerical error. Therefore, we provide a parameterization method of the value function and q-function of the auxiliary problem \eqref{eq:RL-problem-appl}, which can also help us learn the value function  given by \eqref{eq:darkpoolJDP}.
Let us define the true parameters by
\begin{align}
\theta_1^*=\kappa (w-\lambda),\quad
\theta_2^*=\kappa (w+\lambda),\quad
\theta_3^*=w,\quad
\theta_4^*=c\kappa,\quad
\theta_5^*=\kappa \lambda,
\nonumber\\
\zeta_1^*=\kappa (w-\lambda),\quad
\zeta_2^*=\kappa (w+\lambda),\quad
\zeta_3^*=w,\quad
\zeta_4^*=c\kappa,\quad
\zeta_5^*=\kappa \lambda\quad
\zeta_6^*=\kappa.
\end{align}
In lieu of Theorem~\ref{th:LQ-entropytsallis}, we can parameterize the optimal value function and the optimal q-function in the exact form by
 \begin{align*}
     J^{\theta}(t,x)=&-\frac{1}{2}\frac{ \left(\ell\theta_1+4\theta_4\right)e^{\theta_3 (T-t)}+ \ell\theta_2-4\theta_4}{ \left(\theta_2+\ell\right)e^{\theta_3 (T-t)} + \theta_1^*-\ell}x^2+\frac{\gamma }{p-1}(T-t)\\
    & -\frac{p^2 \gamma 
    ^{\frac{1}{p}}}{(2p-1)(p-1)}\int_t^T \left(\frac{1}{\pi}\sqrt{\frac{\theta_5 }{2}\frac{ \left(\ell\theta_1+4\theta_4\right)e^{\theta_3 (T-t)}+ \ell\theta_2-4\theta_4}{ \left(\theta_2+\ell\right)e^{\theta_3 (T-t)} + \theta_1-\ell}}\right)^{\frac{p-1}{p}}ds,\\
     q^{\zeta}(t,x,\xi,\eta)=&-\zeta_6\left(\xi-\frac{1}{2\zeta_6}\frac{ \left(\ell\zeta_1+4\zeta_4\right)e^{\zeta_3 (T-t)}+ \ell\zeta_2-4\zeta_4}{ \left(\zeta_2+\ell\right)e^{\zeta_3 (T-t)} + \zeta_1-\ell}x\right)^2\\
    &-\frac{\zeta_5}{2\zeta_6}\frac{ \left(\ell\zeta_1+4\zeta_4\right)e^{\zeta_3 (T-t)}+ \ell\zeta_2-4\zeta_4}{ \left(\zeta_2+\ell\right)e^{\zeta_3 (T-t)} + \zeta_1-\ell}(\eta-x)^2\\
&+\frac{p^2\gamma^{\frac{1}{p}}}{(p-1)(2p-1)}\left(\frac{1}{\pi}\sqrt{\frac{\zeta_5}{2}\frac{ \left(\ell\zeta_1+4\zeta_4\right)e^{\zeta_3 (T-t)}+ \ell\zeta_2-4\zeta_4}{ \left(\zeta_2+\ell\right)e^{\zeta_3 (T-t)} + \zeta_1-\ell}}\right)^{\frac{p}{p-1}}-\frac{\gamma}{p-1},
\end{align*}
where $(\theta_1,\theta_2,\theta_3,\theta_4,\theta_5)\in\R_+^5$ and $(\zeta_1,\zeta_2,\zeta_3,\zeta_4,\zeta_5,\zeta_6)\in\R_+^6$ are unknown parameters to be learned. As a result, the parameterized policy $\pi^{\zeta}$ is given by
{\small\begin{align}\label{eq:parameter-policy}
&\pi^{\zeta}(\xi,\eta|t,x)=\left(\frac{p-1}{p\gamma}\right)^{\frac{1}{p-1}}\Bigg(\left(\frac{1}{\pi}\sqrt{\frac{\zeta_5}{2}\frac{ \left(\ell\zeta_1+4\zeta_4\right)e^{\zeta_3 (T-t)}+ \ell\zeta_2-4\zeta_4}{ \left(\zeta_2+\ell\right)e^{\zeta_3 (T-t)} + \zeta_1-\ell}}\right)^{\frac{p-1}{p}}\frac{p }{p-1}\gamma ^{\frac{1}{p}}\\
&-\zeta_6\left(\xi-\frac{1}{2\zeta_6}\frac{ \left(\ell\zeta_1+4\zeta_4\right)e^{\zeta_3 (T-t)}+ \ell\zeta_2-4\zeta_4}{ \left(\zeta_2+\ell\right)e^{\zeta_3 (T-t)} + \zeta_1-\ell}x\right)^2-\frac{\zeta_5}{2\zeta_6}\frac{ \left(\ell\zeta_1+4\zeta_4\right)e^{\zeta_3 (T-t)}+ \ell\zeta_2-4\zeta_4}{ \left(\zeta_2+\ell\right)e^{\zeta_3 (T-t)} + \zeta_1-\ell}(\eta-x)^2\Bigg)_+^{\frac{1}{p-1}}.\nonumber
\end{align}}

 \textbf{Simulator}: In what follows, we apply Algorithm \ref{Alg:Tsallis-q-Learning} with the above parameterized value function and q-function.  To generate sample trajectories, we first apply the acceptance-rejection sampling method (\citealt{flury1990acceptance}) to generate the control pair $(u^1_t,u^2_t)$  from the $p$-Gaussian distribution with density function given by \eqref{eq:parameter-policy} at time $t$. Then, the control pair  $(u^1_t,u^2_t)$ is used to the following simulator
 \begin{align*}
X_{t+\Delta t}-X_t=- u^1_t\Delta t - u^2_t N(\Delta t),
\end{align*}
where $N(\Delta t)$ is a Poisson random variable with rate $\lambda \Delta t$. 

\textbf{Algorithm Inputs}: We set the coefficients of the simulator to $\lambda=0.01,X_0=2,T=0.25$, the known parameters as $\gamma=0.01,p=3,c=1,\kappa=1,\ell=10,x=2,T=0.25$, the time step as $dt=0.01$, and the number of iterations as $N=10000$. 
The learning rates are set as follows:
{\small\begin{align*}
    \alpha_{\theta_1}(k)&=\begin{cases}
     \displaystyle   0.01,~\text{if } 1\leq k\leq 2500,\\
     \displaystyle   \frac{0.001}{{\rm linspace}_{(1,20,N)}(k)},~\text{if } 2500< k\leq N,
    \end{cases}~
     \alpha_{\theta_2}(k)=\begin{cases}
     \displaystyle   0.005,~\text{if } 1\leq k\leq 4000,\\
     \displaystyle   \frac{0.005}{{\rm linspace}_{(1,100,N)}(k)}, ~\text{if } 4000< k\leq N,
    \end{cases}\\
     \alpha_{\theta_3}(k)&=\begin{cases}
     \displaystyle   0.01,~\text{if } 1\leq k\leq 4000,\\
     \displaystyle   \frac{0.005}{{\rm linspace}_{(1,20,N)}(k)},~\text{if } 4000< k\leq N,
    \end{cases}~
     \alpha_{\theta_4}(k)=\begin{cases}
     \displaystyle   0.03,~\text{if } 1\leq k\leq 3000,\\
      \displaystyle  \frac{0.005}{{\rm linspace}_{(1,20,N)}(k)}, ~\text{if } 3000< k\leq N,
    \end{cases}\\
    \alpha_{\theta_5}(k)&=\begin{cases}
     \displaystyle   0.05,~\text{if } 1\leq k\leq 3000,\\
     \displaystyle   \frac{0.0005}{{\rm linspace}_{(1,20,N)}(k)},~\text{if } 3000< k\leq N,
    \end{cases}~
     \alpha_{\zeta_1}(k)=\begin{cases}
    \displaystyle    0.03,~\text{if } 1\leq k\leq 3500,\\
     \displaystyle   \frac{0.00135}{{\rm linspace}_{(1,10,N)}(k)}, ~\text{if } 3500< k\leq N,
    \end{cases}\\
\alpha_{\zeta_2}(k)&=\begin{cases}
      \displaystyle  0.1,~\text{if } 1\leq k\leq 3500,\\
      \displaystyle  \frac{0.0002}{{\rm linspace}_{(1,500,N)}(k)},~\text{if } 3500< k\leq N,
        \end{cases}~
     \alpha_{\zeta_3}(k)=\begin{cases}
      \displaystyle  0.1,~\text{if } 1\leq k\leq 2000,\\
      \displaystyle  0.002, ~\text{if } 2000< k\geq 5000,\\
      \displaystyle   \frac{0.0005}{{\rm linspace}_{(1,20,N)}(k)},~\text{if } k \geq 5000,
    \end{cases}\\
     \alpha_{\zeta_4}(k)&=\begin{cases}
      \displaystyle  0.005,~\text{if } 1\leq k\leq 7000,\\
     \displaystyle   \frac{0.001}{{\rm linspace}_{(1,100,N)}(k)},~\text{if } 7000< k\leq N,
    \end{cases}~
     \alpha_{\zeta_5}(k)=\begin{cases}
    \displaystyle    0.006,~\text{if } 1\leq k\leq 5000,\\
     \displaystyle    \frac{0.002}{{\rm linspace}_{(1,10,N)}(k)},~\text{if }  5000<k\leq N,
    \end{cases}\\
    \alpha_{\zeta_6}(k)&=\begin{cases}
    \displaystyle    0.006,~\text{if } 1\leq k\leq 5000,\\
    \displaystyle     \frac{0.002}{{\rm linspace}_{(1,10,N)}(k)},~\text{if }  5000<k\leq N,
    \end{cases}
\end{align*}}
where 
${\rm linspace}_{(a,b,n)}(\cdot)$ is the Matlab function that returns a row vector of $n$ points linearly spaced between and including $a$ and $b$ with
the spacing between the points being $\frac{b-a}{n-1}$.

\begin{figure}[htb]
    \centering
\includegraphics[width=0.85\textwidth]{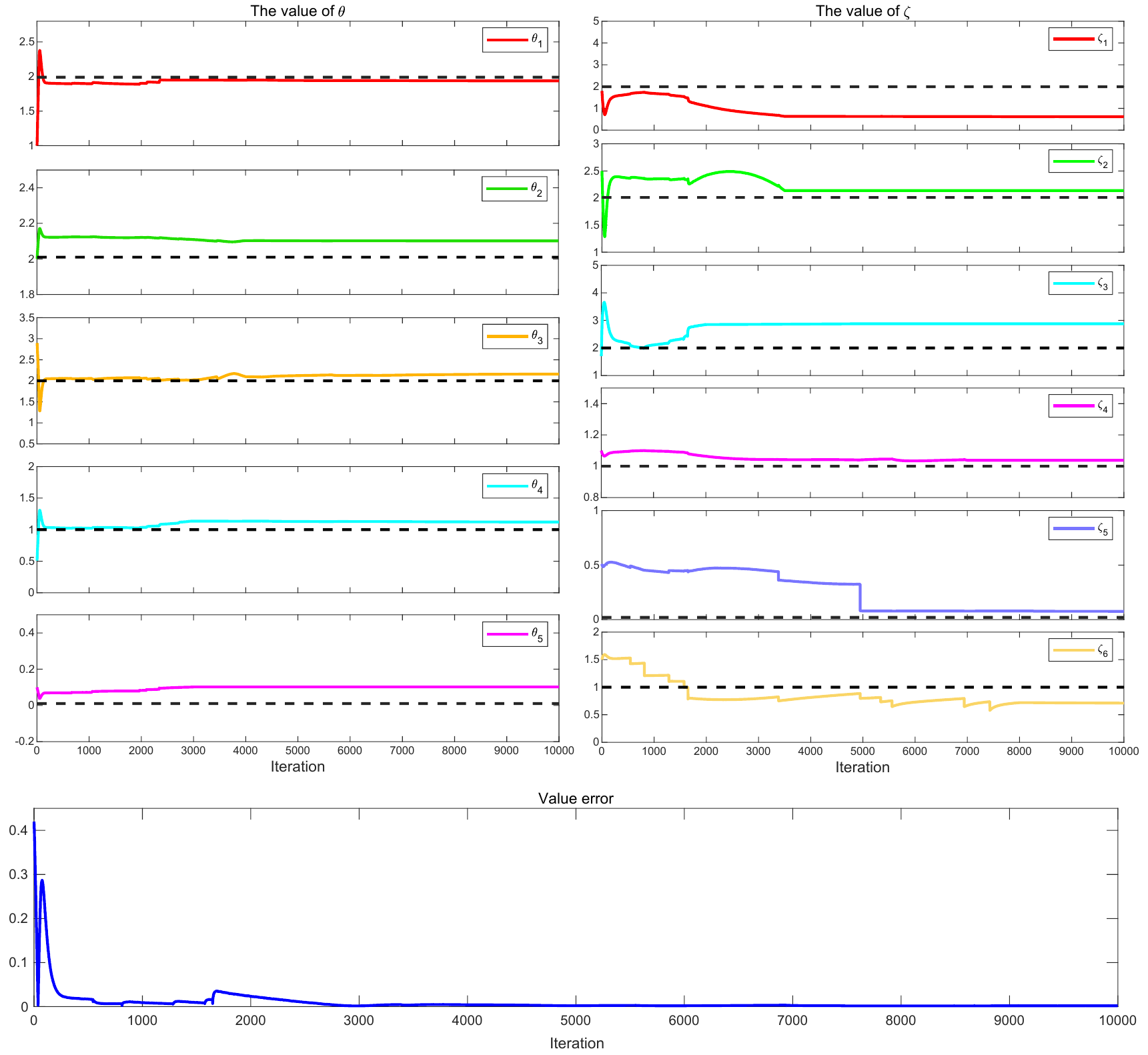}
    \caption{Convergence of Algorithm \ref{Alg:Tsallis-q-Learning} using a market simulator. The upper panels show the convergence of parameter iterations for $(\theta_1,\theta_2,\theta_3,\theta_4,\theta_5,\zeta_1,\zeta_2,\zeta_3,\zeta_4,\zeta_5,\zeta_6)$; the bottom panel shows the value error along the iterations. }
    \label{fig:RL1}
\end{figure}

\begin{table}[htb]
\caption{Parameters}
\label{tab:my-table1}
\begin{tabular}{c|cc}
\hline\hline
Parameters                                          & True value               & Learned by Algorithm~\ref{Alg:Tsallis-q-Learning}                            \\ \hline
$(\theta_1,\theta_2,\theta_3,\theta_4,\theta_5)$    & $(1.99,2.01,2,1,0.01)$   & $(1.9362, 2.1013,2.1604,1.1215,0.1008)$          \\
$(\zeta_1,\zeta_2,\zeta_3,\zeta_4,\zeta_5,\zeta_6)$ & $(1.99,2.01,2,1,0.01,1)$ & $(0.6185, 2.1372, 2.8776, 1.0380,0.1008,0.7107)$ \\ \hline\hline
\end{tabular}
\end{table}

Table~\ref{tab:my-table1} reports 
the true parameter values and the learned parameter values by Algorithm \ref{Alg:Tsallis-q-Learning}. Figure~\ref{fig:RL1}
 plots the convergence behavior of the dark pool trading problem by the offline learning algorithm within the framework of Tsallis entropy.  After sufficient iterations, these parameters converge to the true values. The convergence of both the model parameters and the value error underscores the effectiveness of the offline q-learning algorithm. 
 
\subsection{A non-LQ optimal repo rate control problem}

In this section, we consider a class of non-LQ stochastic control problems with jumps in which we can obtain the closed-form solution with the choice of the Tsallis entropy index $p=2$. More precisely, let $u=(u_t)_{t\in[0,T]}=(\xi_t,\eta_t)_{t\in[0,T]}\in{\cal U}$ be the corresponding control strategy taking values on $U=\R^2$.

Let us consider the controlled state process under $u=(\xi,\eta)\in{\cal U}$ that, for $s\in(t,T]$, 
\begin{align}\label{eq:state-exam2}    
\frac{dX_s^u}{X_{s-}^u}&=\xi_{s-} \mu_1 ds+\eta_{s-} \mu_2 ds+\sigma dW_s-\nu dN_s,\quad X_t^u=x>0,
\end{align}
where the parameters $\mu_1,\mu_2\in\R$, $\sigma>0$ and $\nu<1$. We use $\mu_1,\mu_2\geq 0$ to denote the rate charged by a hedger when he lends money to two kinds of repo market and implements his short-selling position (see \citealt{bichuch2018arbitrage}). Then, the dynamics \eqref{eq:state-exam2} describes the cash flow controlled by the lending strategy $(\xi,\eta)$. 
The value function is given by
\begin{align}\label{eq:Nonlinear}	
v(t,x)=\sup_{u=(\xi,\eta)\in{\cal U}}\Ex^{\Px}\left[U(X_T^u)-\int_t^T \left\{A \xi_s^2 (X_s^u)^{2h}+B \eta_s^2(X_s^u)^{2h}\right\}ds\right].
\end{align}
Here, $U(x)=\frac{x^h}{h}$ is the standard power utility for $x>0$, $0<h<1$, and $A,B>0$ are the cost parameters.

The exploratory formulation of the problem \eqref{eq:state-exam2}-\eqref{eq:Nonlinear} under Tsallis entropy is given by, for $(t,x)\in[t,T]\times\R_+$,
\begin{align}\label{eq:RLV}
V(t,x)&=\sup_{\pi\in\Pi}\Ex^{\Px}\left[\frac{(X_T^{\pi})^h}{h}+\int_t^T\int_{\R^2}\left\{-Au_1^2(X_s^{\pi})^{2h}-Bu_2^2(X_s^{\pi})^{2h}+\gamma l_p (\pi_s(u))\right\}\pi_s(u)duds\right],\nonumber\\
{\rm s.t.}~X_s^{\pi}&=x+\int_t^s\int_{\R^2} (u_1\mu_1+u_2\mu_2)X_v^{\pi}\pi_v(du)dv + \int_t^s\sigma X_v^{\pi} dW_v\\
&\quad-\int_0^t\nu X_{v-}^{\pi}dN_v,~\forall s\in[t,T].\nonumber
\end{align}
Then, the exploratory HJB equation satisfied by $V(t,x)$ is written by
\begin{align}\label{eq:exHJBnew-nonLQ}
0 &= V_t+\frac{\sigma^2 }{2}x^2 V_{xx}+\lambda (V(t,(1-\nu) x)-V(t,x))\nonumber\\
&\quad+\sup_{\pi_t\in\mathcal{P}(U)}\bigg\{xV_x\left(\mu_1\int_{U} u_1\pi(u|t,x)du+\mu_2\int_{\R^2} u_2\pi(u|t,x)du\right)\nonumber\\
&\quad-Ax^{2h}\int_{\R^2} u_1^2\pi(u|t,x)du-Bx^{2h}\int_{\R^2} u_2^2\pi(u|t,x)du+\gamma\int_{\R^2} l_p(\pi(u|t,x))\pi(u|t,x)du\bigg\}
\end{align}
with terminal condition $V(T,x)=\frac{x^h}{h}$ for all $x\in\R_+$. To enforce the constraints $\int_{U} \pi(u|t,x)du=1$ and $\pi(u|t,x)\geq 0$ for 
	$(t,x,u)\in[0,T]\times\R_+\times U$, we introduce the Lagrangian given by
	\begin{align*}
		{\cal L}(t,x,\pi,\psi,\zeta)&:=
		xV_x\left(\mu_1\int_{U} u_1\pi(u|t,x)du  +\mu_2\int_{U} u_2\pi(u|t,x)du \right)
		\\
		&\quad-Ax^{2h}\int_{U} u_1^2\pi(u|t,x)du-Bx^{2h}\int_{U} u_2^2\pi(u|t,x)du\\
		&\quad+\frac{\gamma}{p-1}\int_{U}(\pi(u|t,x)-\pi^{p}(u|t,x))du+\psi(t,x)\left(\int_{U} \pi(u|t,x)du-1\right)\notag\\
		&\quad 
		+\int_U \zeta(t,x,u)\pi(u|t,x)du,
	\end{align*}
	where $\psi(t,x)$ is a function of $(t,x)\in [0,T]\times \R_+$, and $\zeta(t,x,u)$ is a function of $(t,x,u)\in [0,T] \times\R_+\times U$.
	It follows from the first-order condition that
	\begin{align}\label{eq:hatpi1}
		\widehat{\pi}(u|t,x)=
			\left(\frac{p-1}{\gamma p }\right)^{\frac{1}{p-1}}\left(\psi(t,x)+
			\mu_1xV_x u_1-Ax^{2h}  u_1^2+\mu_2xV_x u_2-B x^{2h} u_2^2+\frac{\gamma}{p-1}   \right)_+
	\end{align}
	with the multiplier $\zeta(t,x,u)$ given by
	\begin{align*}
		\zeta(t,x,u)=\left(-\frac{\gamma}{p-1}-
			\mu_1xV_x u_1+A x^{2h} u_1^2-\mu_2xV_x u_2+Bx^{2h}  u_2^2-\psi(t,x)\right)_+,\quad p>1,
	\end{align*}

We next derive the closed-form solution to the exploratory HJB equation \eqref{eq:exHJBnew-nonLQ} for $p=2$. We guess that the exploratory HJB equation \eqref{eq:exHJBnew-nonLQ} has the solution in the form of
\begin{align}\label{eq:Vtxexam2}
V(t,x)=\alpha^*(t)\frac{x^h}{h}+\beta^*(t),\quad \forall (t,x)\in[0,T]\times\R_+.
\end{align}
Plugging this solution into \eqref{eq:hatpi1}, we obtain
	\begin{align*}
		\widehat{\pi}(u|t,x)
		&= \left(\frac{(p-1)\tilde \psi(t,x)}{\gamma p}\right)^{\frac{1}{p-1}}\left(1-
		\frac{Ax^{2h}}{\tilde\psi(t,x)} \left(u_1-Y_1(t,x)\right)^2-
		\frac{Bx^{2h}}{ \tilde \psi(t,x)}\left(u_2-Y_2(t,x)\right)^2\right)_+^{\frac{1}{p-1}}
	\end{align*}
with $\tilde \psi(t,x)= \psi(t,x)+\frac{\gamma}{p-1}+
Y_1^2(t,x)+Y_2^2(t,x)$, which is assumed to be greater than zero and will be verified later. Here, we define $Y_1(t,x):=\frac{\mu_1\alpha^*(t)}{2Ax^{h}}$ and $Y_2(t,x):=\frac{\mu_2\alpha^*(t)}{2Bx^{h}}$. Using the constraint $\int_{U} \pi(u|x)du=1$, we have
\begin{align*}
1&= \left(\frac{(p-1)\tilde \psi(t,x)}{\gamma p}\right)^{\frac{1}{p-1}}\int_{\R^2}\left(1-\frac{Ax^{2h}}{\tilde\psi(t,x)} \left(u_1-Y_1(t,x)\right)^2-\frac{Bx^{2h}}{\tilde \psi(t,x)}\left(u_2-Y_2(t,x)\right)^2\right)_+^{\frac{1}{p-1}}du\\
&=\left(\frac{(p-1)\tilde\psi(t,x)}{p}\right)^{\frac{p}{p-1}}\frac{\pi}{\gamma^{\frac{1}{p-1}}}\frac{1}{\sqrt{AB}x^{2h}}.
\end{align*}
This yields that, for all $(t,x)\in[0,T]\times\R_+$, $\tilde{\psi}(t,x)= \left(\frac{\sqrt{AB}x^{2h}}{\pi}\right)^{\frac{p-1}{p}}\frac{p}{p-1}\gamma^{\frac{1}{p}}$. As $p>1$, it follows that $\tilde{\psi}(t,x)$ is positive. In order to determine the coefficients $\alpha^*(t)$ and $\beta^*(t)$ in \eqref{eq:Vtxexam2}, we first compute the following moments of the  optimal policy that
\begin{align}
\int_{\R^2} u_1^k \widehat{\pi}(u|t,x)du=
\begin{cases}
\displaystyle \frac{\mu_1}{2A x^{h}}\alpha^*(t),\quad k=1,\\[1.2em]
\displaystyle\frac{(p-1) \tilde{\psi}(t,x) }{ 2A(2p-1 )
			x^{2h}}+\left(\frac{\mu_1}{2A x^{h}}\alpha^*(t)\right)^2,\quad k=2,
		\end{cases}
\end{align}
and 
\begin{align}\label{eq:moment2}
\int_{\R^2} u_2^k \widehat{\pi}(u|t,x)du=
\begin{cases}
\displaystyle \frac{\mu_2}{2Bx^h}\alpha^*(t), \quad k=1,\\[1.2em]
\displaystyle \frac{(p-1) \tilde\psi(t,x) }{2B (2p-1 )
				 x^{2h}}+\left(\frac{\mu_2}{2Bx^{h}}\alpha^*(t)\right)^2,\quad k=2.
		\end{cases}
\end{align}
Moreover, it holds that
\begin{align}\label{eq:eqn2exam2}
\int_{\R^2}\frac{1}{p-1}\left(\widehat{\pi}(u|t,x)-\widehat{\pi}(u|t,x)^{p}\right)du=\frac{1}{p-1}-\frac{\tilde{\psi}(t,x)}{\gamma (2p-1)}.
\end{align}
Substituting the above terms into \eqref{eq:exHJBnew-nonLQ}, we derive
\begin{align}\label{eq:eqnalphabetap=2}
		0&=\frac{d\alpha^*(t)}{dt}\frac{x^h}{h}+\frac{d\beta(t)}{dt}+
		\frac{\sigma^2}{2}(h-1)\alpha^*(t) x^h+
		\frac{\mu_1^2}{4A}(\alpha^*(t))^2+\frac{\mu_2^2}{4B}(\alpha^*(t))^2 +\lambda\frac{(1-\nu)^h-1}{h}\alpha^*(t)x^h\nonumber\\
		&\quad-\frac{p}{2p-1}\left(\frac{\sqrt{AB}x^{2h}}{\pi}\right)^{\frac{p-1}{p}}\frac{p}{p-1}\gamma^{\frac{1}{p}}+\frac{\gamma}{p-1}.
	\end{align}
Then, we have the following explicit solution for the exploratory problem~\eqref{eq:RLV}.
	\begin{proposition}\label{prop:nonLQ-entropytsallis}
	Under the Tsallis entropy regularization with $p=2$, the RL problem~\eqref{eq:RLV} has the following explicit value function that
	\begin{align}\label{eq:value2}
		V(t,x)=\frac{\alpha^*(t)}{h}x^h+\beta^*(t),\quad \forall (t,x)\in[0,T]\times\R_+,
	\end{align}
	where the coefficients $\alpha^*(t)$ and $\beta^*(t)$ for $t\in[0,T]$ are given by
\begin{align*}
\alpha^*(t)=&\left(1-\frac{\frac{4h}{3}\sqrt{\frac{\gamma}{\pi}}{(AB)}^{\frac{1}{4}}}{ \frac{\sigma^2}{2}(h-1)h+\lambda((1-\nu)^h-1) }\right)e^{\left(\frac{\sigma^2}{2}(h-1)h+\lambda((1-\nu)^h-1)\right)(T-t)}\\
&+\frac{\frac{4h}{3}\sqrt{\frac{\gamma}{\pi}}{(AB)}^{\frac{1}{4}}}{ \frac{\sigma^2}{2}(h-1)h+\lambda((1-\nu)^h-1) },
\end{align*}

\begin{align*}
\beta^*(t)&=\left(\frac{\mu_1^2}{4A}+\frac{\mu_2^2}{4B}\right)\frac{\left(1-\frac{\frac{4h}{3}\sqrt{\frac{\gamma}{\pi}}{(AB)}^{\frac{1}{4}}}{ \frac{\sigma^2}{2}(h-1)h+\lambda((1-\nu)^h-1) }\right)^2}{2\left(\frac{\sigma^2}{2}(h-1)h+\lambda((1-\nu)^h-1)\right)}\left(e^{2\left(\frac{\sigma^2}{2}(h-1)h+\lambda((1-\nu)^h-1)\right)(T-t)}-1\right)\\
&+\left(\frac{\mu_1^2}{4A}+\frac{\mu_2^2}{4B}\right)\frac{2\left(1-\frac{\frac{4h}{3}\sqrt{\frac{\gamma}{\pi}}{(AB)}^{\frac{1}{4}}}{ \frac{\sigma^2}{2}(h-1)h+\lambda((1-\nu)^h-1) } \right){\frac{4h}{3}\sqrt{\frac{\gamma}{\pi}}{(AB)}^{\frac{1}{4}}}}{\left( \frac{\sigma^2}{2}(h-1)h+\lambda((1-\nu)^h-1) \right)^2}\left(e^{\left(\frac{\sigma^2}{2}(h-1)h+\lambda((1-\nu)^h-1)\right)(T-t)}-1\right)
\\
&+\left(\left(\frac{\mu_1^2}{4A}+\frac{\mu_2^2}{4B}\right)\left(\frac{\frac{4h}{3}\sqrt{\frac{\gamma}{\pi}}{(AB)}^{\frac{1}{4}}}{ \frac{\sigma^2}{2}(h-1)h+\lambda((1-\nu)^h-1) }\right)^2+\gamma\right) (T-t).
	\end{align*}
The optimal policy is given by, for $(t,x)\in[0,T]\times\R_+$ and $u=(u_1,u_2)\in U=\R^2$,
	\begin{align}\label{eq:optimal-policy-lend}
		\widehat{\pi}(u|t,x)
		&=\frac{1}{2\gamma}\left\{2(AB)^{\frac{1}{4}}\sqrt{\frac{\gamma}{\pi}}x^h-
			Ax^{2h}\left(u_1-\frac{\mu_1 \alpha^*(t)}{2A x^h}\right)^2-Bx^{2h}
			\left(u_2-\frac{\mu_2 \alpha^*(t)}{2B x^h}\right)^2\right\}_+.
	\end{align}
	\end{proposition}
 
\begin{proof}
For $p=2$, Eq.~\eqref{eq:eqnalphabetap=2} yields that
		\begin{align*}
		0&=(\alpha^*(t))'\frac{x^h}{h}+(\beta^*(t))'+
		\frac{\sigma^2}{2}(h-1)\alpha^*(t) x^h+\lambda\frac{(1-\nu)^h-1}{h}\alpha^*(t)x^h\\
  &\quad+
		\frac{\mu_1^2}{4A}(\alpha^*(t))^2+\frac{\mu_2^2}{4B}(\alpha^*(t))^2 -\frac{4}{3}\left(\frac{{(AB)}^{\frac{1}{4}}x^{h}}{\sqrt{\pi}}\right)
	 \sqrt{\gamma}+\gamma.
	\end{align*}
Then, it holds that
	\begin{align*}
 \begin{cases}
     \displaystyle \frac{(\alpha^*(t))'}{h}+
		\left(\frac{\sigma^2}{2}(h-1)+\lambda\frac{(1-\nu)^h-1}{h}\right)\alpha^*(t)-\frac{4}{3}\sqrt{\frac{\gamma}{\pi}}{(AB)}^{\frac{1}{4}}
		=0, ~\alpha^*(T)=1,\\[0.8em]
  \displaystyle (\beta^*(t))'+
		\left(\frac{\mu_1^2}{4A}+\frac{\mu_2^2}{4B}\right)(\alpha^*(t))^2+\gamma=0,~\beta^*(T)=0.
 \end{cases}
\end{align*}
Furthermore, we can solve the above ODEs explicitly to obtain the desired result. By some standard verification arguments, the function~\eqref{eq:value2} is the optimal value function of the exploratory problem~\eqref{eq:RLV}.
\end{proof}

\begin{remark}\label{example-two-distribution}
    Notably, the explicit results in Proposition~\ref{prop:nonLQ-entropytsallis} are exclusive to the  Tsallis entropy when $p=2$, while no exact parameterization is available under the conventional Shannon entropy in this example. The optimal policy given by \eqref{eq:optimal-policy-lend} is also a two-dimensional $p$-Gaussian distribution with a compact support set (see Figure \ref{fig:policy-lend}).
    The mean and variance of the policy $(u_1,u_2)$ are given by 
  \begin{align}
&{\rm mean} (u_1,u_2)=\left(\int_{\R^2} u_1 \widehat{\pi}(u)du,\int_{\R^2} u_2 \widehat{\pi}(u)du\right)=
 \frac{\alpha^*(t)}{2 x^{h}}\left(\frac{\mu_1}{A},\frac{\mu_2}{B}\right),\\
&{\rm Var} (u_1,u_2)=\left(\int_{\R^2} u_1^2 \widehat{\pi}(u)du-\left(\int_{\R^2} u_1 \widehat{\pi}(u)du\right)^2,\int_{\R^2} u_2^2 \widehat{\pi}(u)du-\left(\int_{\R^2} u_2 \widehat{\pi}(u)du\right)^2\right)\notag\\
&\qquad \qquad =
 \frac{(p-1) \tilde{\psi}(t,x) }{ 2 (2p-1 )
			x^{2h}} \left(\frac{1}{A},\frac{1}{B}\right).
\end{align}
    In fact, for $(t,x)\in[0,T]\times (0,+\infty)$, we have
    \begin{align*}
        &u_1\in\left[\frac{\mu_1 \alpha^*(t)}{2A x^h}-\sqrt{\frac{2B^{\frac{1}{4}}}{A^{\frac{3}{4}}x^h}\sqrt{\frac{\gamma}{\pi}}},\frac{\mu_1 \alpha^*(t)}{2A x^h}+\sqrt{\frac{2B^{\frac{1}{4}}}{A^{\frac{3}{4}}x^h}\sqrt{\frac{\gamma}{\pi}}}\right],\\
        &u_2\in\left[\frac{\mu_2 \alpha^*(t)}{2B x^h}-\sqrt{\frac{2A^{\frac{1}{4}}}{B^{\frac{3}{4}}x^h}\sqrt{\frac{\gamma}{\pi}}},\frac{\mu_2 \alpha^*(t)}{2B x^h}+\sqrt{\frac{2A^{\frac{1}{4}}}{B^{\frac{3}{4}}x^h}\sqrt{\frac{\gamma}{\pi}}}\right].
    \end{align*}
    
 \begin{figure}[h]
\centering
\includegraphics[width=0.6\textwidth]{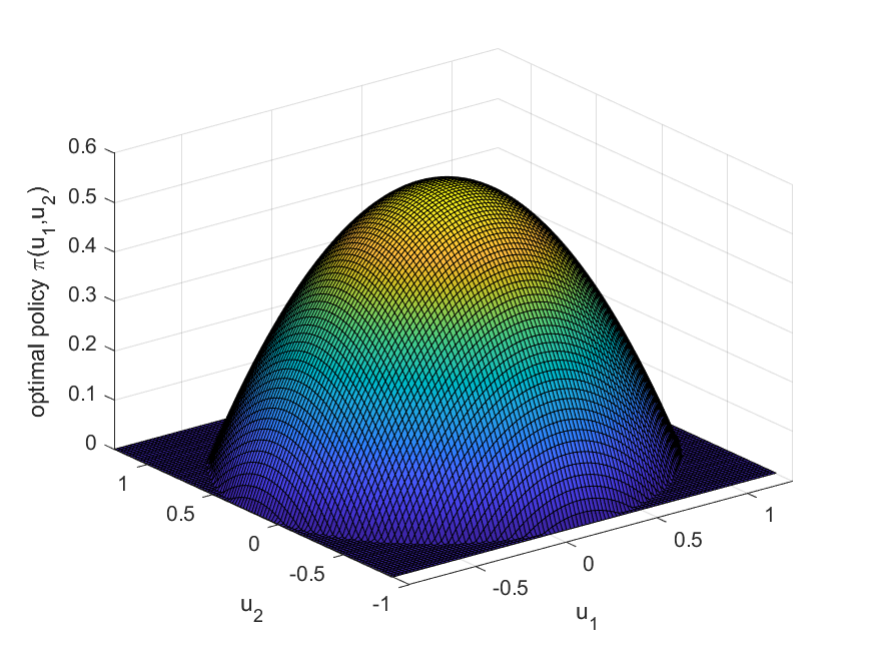}
 \caption{\small The optimal policy $(u_1,u_2)\to \widehat{\pi}(u_1,u_2)$. The model parameters are set to be  $\lambda=1,~\sigma=1,~\nu=0.5,~A=B=1,~\mu_1=\mu_2=0.5,~h=1.5,~\gamma=1,~t=1,~T=2,~x=1$.}\label{fig:policy-lend}
\end{figure}

Moreover, when the temperature parameter $\gamma$ goes to $0$, we have $\tilde{\psi}(t,x) (p-1)\to 0$. Then, it holds that 
\begin{align*}
    \int_{\R^2} (u_1^2+u_2^2) \pi(u|t,x)du&=
    \frac{(p-1) \tilde{\psi}(t,x) }{ 2A(2p-1 )
			x^{2h}}+\left(\frac{\mu_1}{2A x^{h}}\alpha^*(t)\right)^2+\frac{(p-1) \tilde\psi(t,x) }{2B (2p-1 )
				 x^{2h}}+\left(\frac{\mu_2}{2Bx^{h}}\alpha^*(t)\right)^2 \notag\\
     &\xrightarrow[]{\gamma\to 0} \left(\frac{\mu_1}{2A x^{h}}\alpha^*(t)\right)^2+\left(\frac{\mu_2}{2Bx^{h}}\alpha^*(t)\right)^2.
   \end{align*}
This implies the convergence of the borrowing and lending policy to a  deterministic strategy that $(\xi_t,\eta_t)\xrightarrow[\gamma\to 0]{L^2} \left(\frac{\mu_1}{2A x^{h}}\alpha^*(t),\frac{\mu_2}{2Bx^{h}}\alpha^*(t)\right)$ for $(t,x)\in [0,T]\times\R_+$, which is the optimal strategy of the classical stochastic control problem.
\end{remark}

Let us consider the true parameters defined by
\begin{align}
\begin{cases}
\displaystyle \theta_1^*=\frac{\sigma^2}{2}(h-1)h+\lambda((1-\nu)^h-1),\\[0.8em]
\displaystyle \theta_2^*=\left(\frac{\mu_1^2}{4A}+\frac{\mu_2^2}{4B}\right)\frac{1}{\frac{\sigma^2}{2}(h-1)h+\lambda((1-\nu)^h-1)},\\[0.8em]
\displaystyle \theta_3^*=\frac{4h}{3}\sqrt{\frac{\gamma}{\pi}}(AB)^{\frac{1}{4}}\frac{1}{\frac{\sigma^2}{2}(h-1)h+\lambda((1-\nu)^h-1)},
\end{cases}
\end{align}
and
\begin{align*}
&\zeta_1^*=\frac{\sigma^2}{2}(h-1)h+\lambda((1-\nu)^h-1),\quad
\zeta_2^*=A,\quad
\zeta_3^*=B,\quad
\\
&\zeta_4^*=\frac{\mu_1}{2A},\quad
\zeta_5^*=\frac{\mu_2}{2B},\quad
\zeta_6^*=\frac{4h}{3}\sqrt{\frac{\gamma}{\pi}}(AB)^{\frac{1}{4}}\frac{1}{\frac{\sigma^2}{2}(h-1)h+\lambda((1-\nu)^h-1)}.
\end{align*}
By Theorem~\ref{th:LQ-entropytsallis}, we can parameterize the optimal value function and the optimal q-function in the exact form by 
 \begin{align}
     J^{\theta}(t,x)&=\frac{(1-\theta_3)e^{\theta_1(T-t)}+\theta_3}{h}x^h+\frac{1}{2}\theta_2(1-\theta_3)^2(e^{2\theta_1(T-t)}-1)+2\theta_2\theta_3(1-\theta_3)(e^{2\theta_1(T-t)}-1)\nonumber\\
&\quad+(\theta_1\theta_2\theta_3^2+\gamma)(T-t),\label{eq:parameter-J}\\
     q^{\zeta}(t,x,\xi,\eta)&=-\zeta_2x^{2h}\left(u_1-\frac{\zeta_4((1-\zeta_6)e^{\zeta_1(T-t)}+\zeta_6)}{x^h}\right)^2\nonumber\\
     &\quad-\zeta_3x^{2h}\left(u_2-\frac{\zeta_5((1-\zeta_6)e^{\zeta_1(T-t)}+\zeta_6)}{x^h}\right)^2+\frac{4}{3}\sqrt{\frac{\gamma}{\pi}}(\zeta_2\zeta_3)^{\frac{1}{4}}-\gamma,\label{eq:parameter-q}
 \end{align}
where $(\theta_1,\theta_2,\theta_3)\in\R^3$ and $(\zeta_1,\zeta_2,\zeta_3,\zeta_4,\zeta_5,\zeta_6)\in\R^6$ are unknown parameters to be learned. As a consequence, the parameterized policy $\pi^{\zeta}$ is given by
\begin{align}
\pi^{\zeta}(u_1,u_2|t,x)&=\frac{1}{2\gamma}\left(2(\zeta_2\zeta_3)^{\frac{1}{4}}\sqrt{\frac{\gamma}{\pi}}x^h-\zeta_2x^{2h}\left(u_1-\frac{\zeta_4((1-\zeta_6)e^{\zeta_1(T-t)}+\zeta_6)}{x^h}\right)^2\right.\nonumber\\
&\qquad\qquad\quad\left.-\zeta_3x^{2h}\left(u_2-\frac{\zeta_5((1-\zeta_6)e^{\zeta_1(T-t)}+\zeta_6)}{x^h}\right)^2\right)_+.
\end{align}

\begin{figure}[htb]
    \centering
    \includegraphics[width=0.9\textwidth]{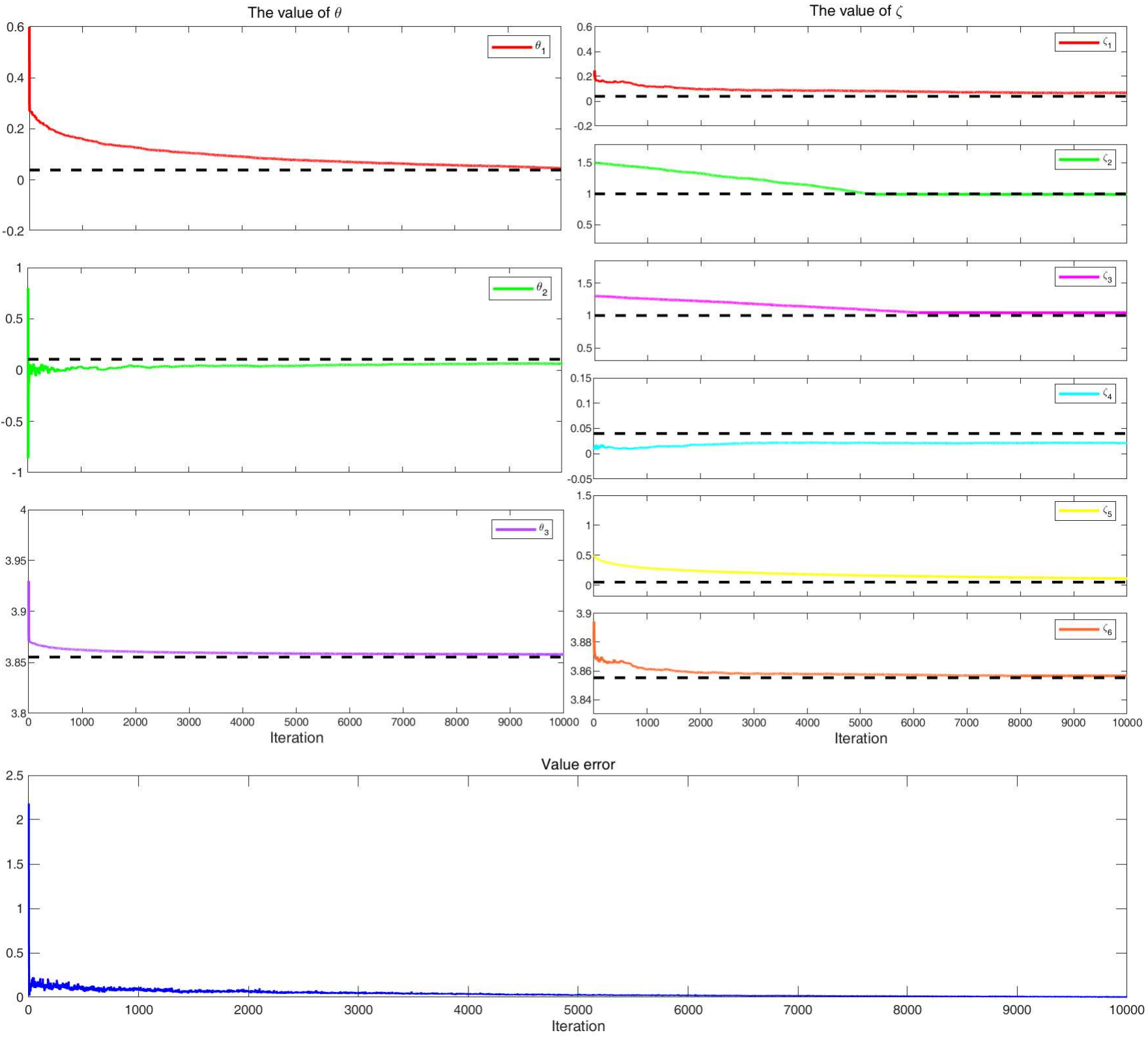}
    \caption{Convergence of Algorithm \ref{Alg:Tsallis-q-Learning} using a market simulator. The upper panels show the convergence of parameter iterations 
    for $(\theta_1,\theta_2,\theta_3,\zeta_1,\zeta_2,\zeta_3,\zeta_4,\zeta_5,\zeta_6)$;    
    the bottom panel shows the value error along the iterations. }
    \label{fig:RL2}
\end{figure}

\textbf{Simulator}: With the above parameterized value function and q-function, we next apply the  Algorithm \ref{Alg:Tsallis-q-Learning}.   We  use the acceptance-rejection sampling method to generate the control pair $(u^1_t,u^2_t)$  from the $p$-Gaussian distribution with density function given by \eqref{eq:parameter-policy} at time $t$. Then the control pair  $(u^1_t,u^2_t)$ is used to generate sample trajectories through the following simulator
 \begin{align*}
X_{t+\Delta t}-X_t=(\mu_1 u^1_t+\mu_2 u_2)X_t\Delta t +\sigma X_t W(\Delta t)- \nu X_t N(\Delta t),
\end{align*}
where $N(\Delta t)$ is a Poisson random variable with rate $\lambda \Delta t$, $W(\Delta)$ is a random variable obeying normal distribution ${\cal N}(0,\Delta t)$.

\textbf{Algorithm Inputs}: We set the coefficients of the simulator to $\lambda=0.01,\mu_1=0.08,\mu_2=0.1,\sigma=0.2,\nu=0.05, X_0=2,T=0.5$, the known parameters as $\gamma=0.01,p=2,A=1,B=1,h=2,x=2,T=0.5$, the time step as $dt=0.01$, and the number of iterations as $N=10000$. The learning rates are set as follows:

{\small\begin{align*}
    \alpha_{\theta_1}(k)&=
        \frac{0.0023}{{\rm linspace}_{(1,90,N)}(k)},~~\text{for } 1\leq k\leq N,~~~~~~~~~~~~~~~~
     \alpha_{\theta_2}(k)= \frac{0.0325}{{\rm linspace}_{(1,90,N)}(k)},~~\text{for } 1\leq k\leq N,\\
     \alpha_{\theta_3}(k)&= \frac{0.0017}{{\rm linspace}_{(1,60,N)}(k)},~~\text{for } 1\leq k\leq N,~~~~~~~~~~~~~~~
     \alpha_{\zeta_1}(k)=\frac{0.0026}{{\rm linspace}_{(1,50,N)}(k)}, ~\text{for } 1\leq  k\leq N,\\
\alpha_{\zeta_2}(k)&=\begin{cases}
     \displaystyle   0.005,~\text{if } 1\leq k\leq 5200,\\
     \displaystyle   \frac{0.01}{{\rm linspace}_{(1,500,N)}(k)},~\text{if } 5200< k\leq N,
        \end{cases}~~~
     \alpha_{\zeta_3}(k)=\begin{cases}
      \displaystyle  0.002,~\text{if } 1\leq k \leq 6100,\\
      \displaystyle   \frac{0.005}{{\rm linspace}_{(1,500,N)}(k)},~\text{if }  6100 < k \leq N,
    \end{cases}\\
     \alpha_{\zeta_4}(k)&=\frac{0.0046}{{\rm linspace}_{(1,150,N)}(k)},~~\text{for }1\leq k\leq N,~~~~~~~~~~~~~~~~
     \alpha_{\zeta_5}(k)=\frac{0.0045}{{\rm linspace}_{(1,150,N)}(k)},~~\text{for }1\leq k\leq N,\\
    \alpha_{\zeta_6}(k)&=\begin{cases}
     \displaystyle   \frac{0.015}{{\rm linspace}_{(1,80,N)}(k)},~\text{if } 1\leq k\leq 8000,\\
     \displaystyle    0.00001,~\text{if }  8000<k\leq N,
    \end{cases}
\end{align*}}

\begin{table}[htb]
\centering
\caption{Parameters}
\label{tab:my-table2}
\begin{tabular}{c|cc}
\hline\hline
Parameters                                  & True value               & Learned by Algorithm \ref{Alg:Tsallis-q-Learning}                    \\ \hline
$(\theta_1,\theta_2,\theta_3)$              & $(0.039,0.105,3.855)$ & $(0.039,0.065,3.857)$                 \\
$(\zeta_1,\zeta_2,\zeta_3,\zeta_4,\zeta_5,\zeta_6)$ & $(0.039,1,1,0.040,0.050,3.855)$ & $(0.056, 1, 1.042, 0.022,0.074,3.855)$ \\ \hline\hline
\end{tabular}
\end{table}

We then track the parameters $(\theta_1,\theta_2,\theta_3)$, $(\zeta_1,\zeta_2,\zeta_3,\zeta_4,\zeta_5,\zeta_6)$, and the error of the value function throughout the iterative process.  Table~\ref{tab:my-table2} reports 
the true parameter values and the learned parameter values by Algorithm \ref{Alg:Tsallis-q-Learning}. 
After sufficient iterations, it can be seen from Figure~\ref{fig:RL2} that
the iterations of parameters exhibit good convergence, and the value error converges to zero, illustrating the satisfactory performance of the q-learning algorithm under the Tsallis entropy.

In what follows, we are also interested in illustrating the effectiveness of our Actor-Critic q-learning algorithm \ref{Alg:Tsallis-q-Learning-normalizing-unavailable} when the normalizing function is not available. Although the true normalizing function can be derived explicitly in this example, we can still choose different parameters for the optimal q-function and the optimal policy and see whether the Actor-Critic iterations will converge. Meanwhile, we
still take advantage of the explicit expression of the optimal q-function and the distribution of the optimal policy in this example. Precisely, we parameterize the optimal  value function and the optimal q-function the same as \eqref{eq:parameter-J} and \eqref{eq:parameter-q} but choose different parameters $(\chi_1,\chi_2,\chi_3,\chi_4,\chi_5,\chi_6)$ to approximate the optimal policy as following: for $(t,x,u_1,u_2)\in[0,T]\times \R_+\times \R^2$,
\begin{align*}
\pi^{\chi}(u_1,u_2|t,x)
&=\frac{1}{2\gamma}\left(2(\chi_2\chi_3)^{\frac{1}{4}}\sqrt{\frac{\gamma}{\pi}}x^h-\chi_2x^{2h}\left(u_1-\frac{\chi_4((1-\chi_6)e^{\chi_1(T-t)}+\chi_6)}{x^h}\right)^2\right.\\
&\qquad\qquad\quad\left.-\chi_3x^{2h}\left(u_2-\frac{\chi_5((1-\chi_6)e^{\chi_1(T-t)}+\chi_6)}{x^h}\right)^2\right)_+,\nonumber
\end{align*}
with true values given by
\begin{align*}
&\chi_1^*=\frac{\sigma^2}{2}(h-1)h+\lambda((1-\nu)^h-1),\quad
\chi_2^*=A,\quad
\chi_3^*=B,\quad
\\
&\chi_4^*=\frac{\mu_1}{2A},\quad
\chi_5^*=\frac{\mu_2}{2B},\quad
\chi_6^*=\frac{4h}{3}\sqrt{\frac{\gamma}{\pi}}(AB)^{\frac{1}{4}}\frac{1}{\frac{\sigma^2}{2}(h-1)h+\lambda((1-\nu)^h-1)}.
\end{align*}
We use the same simulator and algorithm inputs as in the previous case when the normalizing function is available, except the learning rates that are now chosen by
{\small\begin{align*}
     \alpha_{\chi_1}(k)&=\frac{0.026}{{\rm linspace_{(1,100,N)}}(k)},~\text{for } 1\leq  k\leq N,~
\alpha_{\chi_2}(k)=\frac{0.05}{{\rm linspace_{(1,500,N)}}(k)},~\text{for } 1\leq  k\leq N,\\
     \alpha_{\chi_3}(k)&=\begin{cases}
      \displaystyle  0.002,~\text{if } 1\leq k\leq 6100,\\
      \displaystyle   \frac{0.005}{{\rm linspace}_{(1,500,N)}(k)},~\text{if }  6100\leq k\leq N,
    \end{cases}~
     \alpha_{\chi_4}(k)=
        \frac{0.00461}{{\rm linspace_{(1,150,N)}}(k)},~\text{for } 1\leq  k\leq N,
    \\
     \alpha_{\chi_5}(k)&=\frac{0.005}{{\rm linspace_{(1,200,N)}}(k)},~\text{for } 1\leq  k\leq N,~~~~~~~~~~~~\alpha_{\chi_6}(k)=\begin{cases}
      \displaystyle  \frac{0.0015}{{\rm linspace}_{(1,80,N)}(k)},~\text{if } 1\leq k\leq 8000,\\
      \displaystyle   0.00001,~\text{if }  80001\leq k\leq N.
    \end{cases}
\end{align*}}

The true parameters and the learned parameters by Algorithm \ref{Alg:Tsallis-q-Learning-normalizing-unavailable} are reported in Table~\ref{tab:my-table3}. We can see from both Figure~\ref{fig:RL3} and Table~\ref{tab:my-table3} that the iterations of parameters exhibit good convergence, and the value error converges to zero after sufficient iterations.

\begin{table}[htb]
\centering
\caption{Parameters used in the simulator.}
\label{tab:my-table3}
\begin{tabular}{c|cc}
\hline\hline
Parameters                                  & True value               & Learned by Algorithm \ref{Alg:Tsallis-q-Learning-normalizing-unavailable}                    \\ \hline
$(\theta_1,\theta_2,\theta_3)$              & $(0.039,0.105,3.855)$ & $(0.034,0.142,3.856)$                 \\
$(\zeta_1,\zeta_2,\zeta_3,\zeta_4,\zeta_5,\zeta_6)$ & $(0.039,1,1,0.040,0.050,3.855)$ & $(0.062, 1, 1.043, 0.039,0.0916,3.856)$ \\
$(\chi_1,\chi_2,\chi_3,\chi_4,\chi_5,\chi_6)$ & $(0.039,1,1,0.04,0.05,3.855)$ & $(0.070, 1.304, 1.301, 0.044,0.0101,3.859)$ \\
\hline\hline
\end{tabular}
\end{table}

\begin{figure}[htb]
    \centering
    \includegraphics[width=0.9\textwidth]{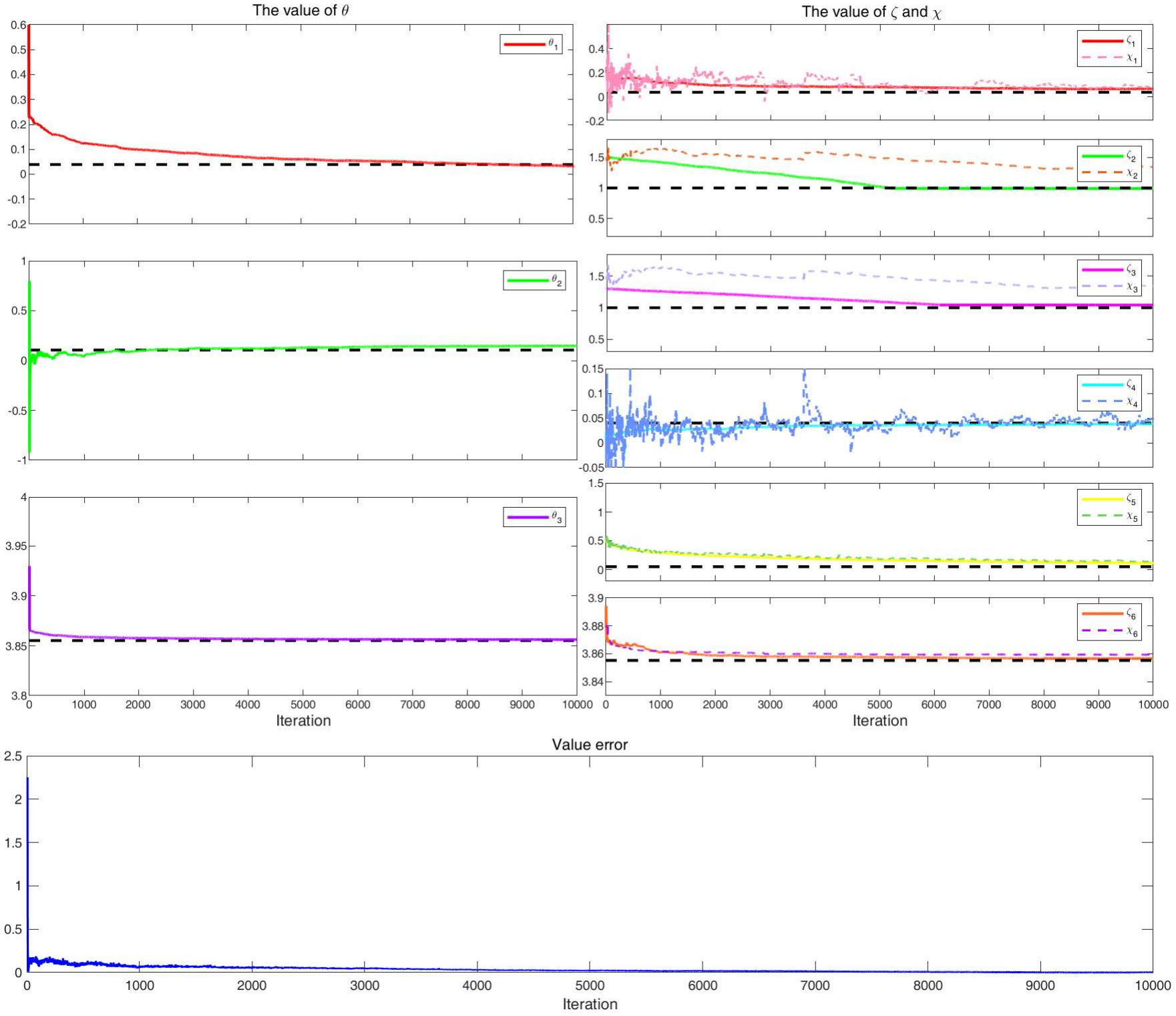}
    \caption{Convergence of Algorithm  \ref{Alg:Tsallis-q-Learning-normalizing-unavailable} using a market simulator. The upper panels show the convergence of parameter iterations 
    for $(\theta_1,\theta_2,\theta_3,\zeta_1,\zeta_2,\zeta_3,\zeta_4,\zeta_5,\zeta_6,\chi_1,\chi_2,\chi_3,\chi_4,\chi_5,\chi_6)$;    
    the bottom panel shows the value error along the iterations. }
    \label{fig:RL3}
\end{figure}

\vspace{1cm}

\section{Conclusions}\label{sec:conclusion}

This paper studies the continuous-time q-learning in jump-diffusion models under Tsallis entropy regularization. By considering the general entropy beyond the Shannon type, we obtain optimal policies that are not necessarily Gibbs measures but with compact support. We establish the martingale characterization of the q-function and devise several q-learning algorithms. The effectiveness of our algorithms is demonstrated in two financial applications where the parameter iterations exhibit desirable convergence. One appealing future study is to investigate the continuous-time RL algorithms under Tsallis entropy for mean field game and mean field control problems. Moreover, as the sampling for the resulting optimal policy with compact support might be challenging in high-dimensional scenarios, another interesting future research direction is to examine some tailor-made sampling methods, such as \citealt{neal2003slice,tong2020mala},
in our RL algorithms for some high-dimensional financial applications.

\ \\
\noindent\textbf{Acknowledgement.}\quad 
L. Bo and Y. Huang are supported by National Natural Science of Foundation of China (No. 12471451), Natural Science Basic Research Program of Shaanxi (No. 2023-JC-JQ-05), Shaanxi Fundamental Science Research Project for Mathematics and Physics (No. 23JSZ010) and Fundamental Research Funds for the Central Universities (No. 20199235177). X. Yu is supported by the Hong Kong RGC General Research Fund (GRF) under grant no. 15211524 and the Hong Kong Polytechnic University research grant under no. P0045654. T. Zhang is supported by the National Natural Science of Foundation of China (No.12401619) and the Natural Science Foundation of the Jiangsu Higher Education Institutions of China (No. 24KJB110023).

\bibliographystyle{informs2014}
\bibliography{referencelist}

\end{document}